\newtheorem{theorem}{Theorem}
\newtheorem{lemma}{Lemma}
\newtheorem{proposition}{Proposition}
\newtheorem{corollary}{Corollary}
\newtheorem{remark}{Remark}
\newtheorem{definition}{Definition}
\numberwithin{equation}{section}
\numberwithin{theorem}{section}
\numberwithin{lemma}{section}
\numberwithin{proposition}{section}
\numberwithin{corollary}{section}
\numberwithin{remark}{section}
\numberwithin{definition}{section}
\begin{document}
\title{Navier-Stokes flow past a rigid body: \\
attainability of steady solutions as limits \\
of unsteady weak solutions, \\
starting and landing cases}
\author{Toshiaki Hishida \\
Graduate School of Mathematics, Nagoya University \\
Nagoya 464-8602, Japan \\
\texttt{hishida@math.nagoya-u.ac.jp} \\
and \\
Paolo Maremonti \\
Dipartimento di Matematica e Fisica  \\
Universit\`a degli Studi della Campania Luigi Vanvitelli \\
I-81100 Caserta, Italy \\
\texttt{paolo.maremonti@unicampania.it}}
\date{}
\maketitle
\begin{abstract}
Consider the Navier-Stokes flow in 3-dimensional exterior domains,
where a rigid body is translating with prescribed translational velocity
$-h(t)u_\infty$ with constant vector $u_\infty\in\mathbb R^3\setminus\{0\}$.
Finn raised the question whether his steady solutions are attainable as limits
for $t\to\infty$ of unsteady solutions starting from motionless state when
$h(t)=1$ after some finite time and $h(0)=0$ (starting problem).
This was affirmatively solved by Galdi, Heywood and Shibata \cite{GHS}
for small $u_\infty$.
We study some generalized situation in which unsteady solutions start from
large motions being in $L^3$.
We then conclude that the steady solutions for small $u_\infty$ are still 
attainable as limits of evolution of those fluid motions which are found as 
a sort of weak solutions.
The opposite situation, in which $h(t)=0$ after some finite time and $h(0)=1$
(landing problem), is also discussed.
In this latter case, the
rest state
is attainable no matter how large $u_\infty$ is.

\noindent
{\bf MSC (2010)}.
35Q30, 76D05.

\noindent
{\bf Keywords}.
Navier-Stokes flow, exterior domain, starting problem,
landing problem, steady flow, attainability, Oseen semigroup.
\end{abstract}

\section{Introduction and results}\label{intro}
Let us consider a viscous incompressible flow past an obstacle in 3D,
which is a translating rigid body with a prescribed velocity $-hu_\infty$,
where $u_\infty\in\mathbb R^3\setminus\{0\}$ is a constant vector
and the function $h=h(t)$ describes the transition of the translational
velocity of the body.
In the frame attached to the body, the motion of the fluid obeys
the exterior problem for the Navier-Stokes system
\begin{equation}
\begin{split}
&\partial_tu+u\cdot\nabla u=\Delta u-\nabla p_u-hu_\infty\cdot\nabla u, \\
&\mbox{div $u$}=0, \\
&u|_{\partial\Omega}=-hu_\infty, \\
&u\to 0 \quad\mbox{as $|x|\to\infty$}, \\
\end{split}
\label{NS-1}
\end{equation}
where $\Omega$ denotes the exterior of the body in $\mathbb R^3$
with smooth boundary $\partial\Omega$.
The unknown functions are the velocity field
$u=(u_1(x,t),u_2(x,t),u_3(x,t))$ and the associated pressure $p_u=p_u(x,t)$.

Suppose both the fluid and the body are initially at rest, that is,
$u(\cdot,0)=0$ and $h(0)=0$.
If the body starts to move from the rest state until the terminal velocity
$-u_\infty$ at an instant $T_0>0$ and, afterwards,
$h(t)=1$ for $t\geq T_0$,
then the large time behavior of the solution $u(x,t)$ to \eqref{NS-1}
subject to the initial condition $u(\cdot,0)=0$ would be related to
the steady problem
\begin{equation}
\begin{split}
&u_s\cdot\nabla u_s=\Delta u_s-\nabla p_{u_s}-u_\infty\cdot\nabla u_s, \\
&\mbox{div $u_s$}=0, \\
&u_s|_{\partial\Omega}=-u_\infty, \\
&u_s\to 0 \quad\mbox{as $|x|\to\infty$}. \\
\end{split}
\label{NS-s}
\end{equation}
Indeed, in this situation, 
Finn \cite{F1} raised the question whether $u(x,t)$ converges to
$u_s(x)$ as $t\to\infty$ in a sense as long as 
$u_\infty\in\mathbb R^3\setminus\{0\}$ is small enough (Finn's starting problem).
If that is the case, the steady flow $u_s(x)$ is said to be
``attainable'' by following the terminology of Heywood \cite{He1},
who gave a partial answer to the starting problem.
Note that the steady problem \eqref{NS-s} with sufficiently small
$u_\infty\in\mathbb R^3\setminus\{0\}$ possesses a unique solution
$u_s$, what is called the physically reasonable solution,
due to Finn \cite{F2} himself.
On account of its anisotropic behavior with wake property,
the solution $u_s(x)$ enjoys better summability
$u_s\in L^q(\Omega)$ for every $q>2$
(than the case where the body is at rest), see \eqref{steady} below,
however, still infinite energy
$u_s\notin L^2(\Omega)$ because the net force exerted by the fluid
to $\partial\Omega$ cannot vanish when the external force is absent, see 
Finn \cite{F60} and Galdi \cite{Ga-b}.
It is reasonable to look for a solution $u(x,t)$ of the form
$u(x,t)=h(t)u_s(x)+v(x,t)$
and to expect $u(t)\in L^2(\Omega)$ since $u(0)=0$,
however, in this case,
$v(t)\notin L^2(\Omega)$ follows from $u_s\notin L^2(\Omega)$
and thus the energy method is not enough to construct the perturbation $v(t)$.
Thus the problem had remained open until Kobayashi and Shibata \cite{KS}
developed the $L^q$-$L^r$ decay estimate of the Oseen semigroup,
see \eqref{d-1}--\eqref{d-2} below.
Finally, by making use of this estimate, the starting problem
from the rest state
was completely solved by Galdi, Heywood and Shibata \cite{GHS}.

In the present paper we intend to provide further contributions to this issue
for its better understanding.
It would be worth while studying more possibilities of
attainablity of the steady flow $u_s$.
The aim is to find out many solutions to \eqref{NS-1},
which converge to $u_s$ as $t\to\infty$,
even if starting from large motions of both the fluid and the body,
that is, the initial velocity
\begin{equation}
u(x,0)=u_0(x)
\label{IC}
\end{equation}
can be large with infinite energy and $h(0)$ is large, too.
We take $u_0$ from $L^3(\Omega)$, as usual, or even from
$L^{3,\infty}_0(\Omega)$, the completion of $C_0^\infty(\Omega)$
in the Lorentz space (weak-$L^3$ space) $L^{3,\infty}(\Omega)$,
together with the compatibility conditions
\begin{equation}
\mbox{div $u_0$}=0, \qquad
\nu\cdot(u_0+h(0)u_\infty)|_{\partial\Omega}=0,
\label{compati}
\end{equation}
where $\nu$ stands for the outer unit normal to $\partial\Omega$ and
the latter condition is understood in the sense of normal trace.
The function $h=h(t)$ is assumed to satisfy
\begin{equation}
h\in C^{1,\theta}([0,\infty))\quad\mbox{for some $\theta\in (0,1)$},
\label{h-1}
\end{equation}
\begin{equation}
h(t)=1 \quad\mbox{on $[T_0,\infty)$ for some $T_0>0$}.
\label{h-2}
\end{equation}
The main result on the starting problem reads as follows.
\begin{theorem}
There exists a 
constant
$\delta >0$ with the following property:
If $u_\infty\in\mathbb R^3\setminus\{0\}$ fulfills $|u_\infty|\leq\delta$,
then, for every $u_0\in L^{3,\infty}_0(\Omega)$ with \eqref{compati}
and for every function $h(t)$ satisfying \eqref{h-1}--\eqref{h-2},
problem \eqref{NS-1} subject to \eqref{IC} admits at least one solution
$u(x,t)$ which enjoys
\begin{equation}
\|u(t)-u_s\|_{L^\infty(\Omega)}=O(t^{-1/2})
\label{attain-1}
\end{equation}
as $t\to\infty$, where $u_s$ is a unique solution to \eqref{NS-s}.
\label{starting}
\end{theorem}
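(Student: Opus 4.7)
The natural approach is to pass to the perturbation $w$ defined by $u(x,t) = h(t)u_s(x) + w(x,t)$. Using the steady equation \eqref{NS-s} and the compatibility condition \eqref{compati}, one checks that $w$ is divergence-free, carries vanishing normal trace on $\partial\Omega$, and for $t \geq T_0$ satisfies the autonomous perturbed Oseen system
\begin{equation*}
\partial_t w + u_\infty\cdot\nabla w + u_s\cdot\nabla w + w\cdot\nabla u_s + w\cdot\nabla w = \Delta w - \nabla \tilde p,
\end{equation*}
together with $w|_{\partial\Omega}=0$ and $w(0) = u_0 - h(0)u_s$. On $[0,T_0]$ additional forcing proportional to $h'(t)u_s$ and to $h(h-1)\,u_s\!\cdot\!\nabla u_s$, $h(h-1)\,u_\infty\!\cdot\!\nabla u_s$ is present, but it is compactly supported in time. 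The task reduces to constructing $w$ globally and proving $\|w(t)\|_{L^\infty}=O(t^{-1/2})$.

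For the global construction, since $u_0\in L^{3,\infty}_0$ need not lie in $L^2$, I would split $u_0 = a + b$, with $a\in L^2\cap L^{3,\infty}$ carrying the prescribed normal trace of $u_0$ on $\partial\Omega$ and $b$ divergence-free with vanishing normal trace and arbitrarily small Lorentz norm. An approximate scheme is designed so that the contribution of $a$ is controlled by Galerkin-type energy estimates centred around the infinite-energy lift $h(t)u_s$, whereas the contribution of $b$ is propagated by the Oseen semigroup of \cite{KS}. Smallness of $|u_\infty|$ ensures, via Finn's theorem, that $u_s$ is small in the Lorentz norms entering the estimates, so that the linear terms $u_s\cdot\nabla w$ and $w\cdot\nabla u_s$ are absorbed by the viscous dissipation. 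Weak compactness then yields a global weak solution which satisfies a strong energy inequality on $[T_0,\infty)$.

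For the decay, the energy inequality produces $\int_{T_0}^\infty\|\nabla w\|_{L^2}^2\,dt<\infty$ for the finite-energy component; coupling this bound with Lorentz--Sobolev interpolation yields a sequence $t_k\to\infty$ with $\|w(t_k)\|_{L^{3,\infty}}\to 0$. Pick $t_*\geq T_0$ such that this norm lies below the threshold set by the forthcoming fixed-point scheme, and restart the autonomous perturbation equation at $t_*$ with small datum in $L^{3,\infty}_0$. A Kato-type iteration in a space with the time weight $t^{1/2}$ for the $L^\infty$-norm (and matching weights for intermediate $L^r$) then produces a strong solution whose linear building block is the semigroup generated by the linearisation around $u_s$, that is, $-\Delta + u_\infty\cdot\nabla + u_s\cdot\nabla + (\cdot)\cdot\nabla u_s$; its $L^q$--$L^r$ decay is inherited, for small $|u_\infty|$, from the Kobayashi--Shibata estimates for the bare Oseen semigroup via a Duhamel/Neumann-series argument exploiting the smallness of $u_s$. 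A weak--strong uniqueness argument identifies this strong solution with the weak solution from the previous step on $[t_*,\infty)$, thereby delivering \eqref{attain-1}.

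The principal difficulty will be to secure the smallness of the weak solution in $L^{3,\infty}$ at a finite time. The initial data lies only in $L^{3,\infty}_0$, not in $L^2$, and the lift $u_s$ has infinite energy, so neither the classical Leray--Hopf framework nor a direct $L^{3,\infty}$ energy identity is available off the shelf. The splitting $u_0=a+b$ and the associated approximation scheme must be engineered so that the nonlinear coupling between the finite-energy component and the small Lorentz component can be closed by the smallness of $u_s$, and so that the dissipation collected from the finite-energy piece actually transfers into a quantitative $L^{3,\infty}$ smallness for the \emph{full} perturbation along a sequence of times diverging to infinity; the smallness of $|u_\infty|$ is to be exploited both to control $u_s$ and to close the perturbative estimates on the linearised semigroup.
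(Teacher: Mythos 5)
Your overall architecture coincides with the paper's: subtract the lift $h(t)u_s$, absorb the non-$L^2$ part of the initial data into a linear (Oseen/heat-type) evolution so that the remainder is a finite-energy Leray--Hopf weak solution with a strong energy inequality, locate a large time at which that remainder is small in $L^3$, restart with a Kato iteration built on the Kobayashi--Shibata $L^q$--$L^r$ estimates treating $u_s\cdot\nabla w+w\cdot\nabla u_s$ perturbatively via the smallness of $u_s$, and conclude by weak--strong uniqueness. (The paper realizes the ``small plus finite-energy'' idea slightly differently: it takes the whole-space non-autonomous Oseen evolution of the zero-extension of $v_0=u_0-h(0)u_s$, corrected by a cut-off and a Bogovskii term, as an auxiliary function $\widetilde U$, so that the remaining initial datum is compactly supported and hence in $L^2$; your splitting $u_0=a+b$ with $b$ small in $L^{3,\infty}$ plays the role of the paper's approximation $v_{0\varepsilon}$ and is workable.)

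The genuine gap is in your decay step. You assert that the energy inequality yields $\int_{T_0}^\infty\|\nabla w\|_{L^2}^2\,dt<\infty$ and that this, with interpolation, produces times $t_k\to\infty$ at which $w$ is small. In this problem the right-hand side of the strong energy inequality \emph{grows} as $t\to\infty$: the forcing $f$ contains a piece decaying only like $(1+t)^{-1/2}$ in $H^{-1}$ (coming from the cut-off commutators and from $\widetilde U\cdot\nabla\widetilde U$, whose $L^2$ norm is only $O(t^{-3/4})$), and the transport term $\int_0^t\langle(hu_s+\widetilde U)\otimes w,\nabla w\rangle$ cannot be fully absorbed into the dissipation. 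Consequently one gets only a growth bound of the form $\|w(t)\|_2^2+\int_0^t\|\nabla w\|_2^2\leq K_\varepsilon+C\log(1+t)+8\varepsilon\sqrt t$, not finiteness of the Dirichlet integral, and your sequence $t_k$ does not exist by the argument you give. The paper's resolution has two ingredients you are missing: first, the auxiliary function satisfies the \emph{sharp} decay $\|\widetilde U(t)\|_{3,\infty}=o(1)$ and hence $\|\widetilde U\otimes\widetilde U+h(\widetilde U\otimes u_s+u_s\otimes\widetilde U)\|_2=o(t^{-1/4})$ --- this is exactly where the hypothesis $u_0\in L^{3,\infty}_0$ (rather than $L^{3,\infty}$) enters, and it makes the coefficient $\varepsilon$ of the $\sqrt t$ growth arbitrarily small; second, a mean-value selection of $\bar t\in(t,2t)$ outside the exceptional null set of the strong energy inequality gives simultaneously $\|w(\bar t)\|_2^2\lesssim K_\varepsilon+\log t+\varepsilon\sqrt t$ and $\|\nabla w(\bar t)\|_2^2\lesssim t^{-1}(K_\varepsilon+\log t+\varepsilon\sqrt t)$, whence $\|w(\bar t)\|_3^4\leq C\|w(\bar t)\|_2^2\|\nabla w(\bar t)\|_2^2\lesssim t^{-1}(K_\varepsilon+\log t)^2+\varepsilon^2$, which is brought below the restart threshold by first fixing $\varepsilon$ and then taking $t$ large. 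Without this quantitative balancing of a growing energy against a decaying mean dissipation rate, the restart of the fixed-point scheme --- and hence the whole proof of \eqref{attain-1} --- cannot be launched.
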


We stress that the small constant $\delta$ 
in Theorem \ref{starting} is independent of $u_0$ and $h$.
Our global solution is a sort of weak solution, to be precise, it is of the form
\begin{equation}
u(x,t)=h(t)u_s+\widetilde U(x,t)+w(x,t),
\label{decompo}
\end{equation}
where $\widetilde U(x,t)$ is an auxiliary function (regular enough for $t>0$),
while $w(x,t)$ is the so-called Leray-Hopf weak solution \cite{L2}, 
\cite{Ho}, \cite{Mas}.
The idea to solve the Navier-Stokes initial value problem with
large initial data in $L^3$ (or $L^{3,\infty}_0$) is due to Maremonti \cite{M},
in which a solution to \eqref{NS-1} with $u_\infty=0$ subject to \eqref{IC}
is constructed in the form
$u(t)=e^{-tA}u_0+w(t)$ with a Leray-Hopf weak solution $w(t)$,
where $e^{-tA}$ denotes the Stokes semigroup.
The similar approach was adopted also by \cite{BaS}, \cite{SS}.
In the case under consideration of this paper, the pair
\[
v(x,t):=u(x,t)-h(t)u_s(x), \qquad
p_v(x,t):=p_u(x,t)-h(t)p_{u_s}(x)
\]
should obey
\begin{equation}
\begin{split}
&\partial_tv+v\cdot\nabla v+h(u_s\cdot\nabla v+v\cdot\nabla u_s)
=\Delta v-\nabla p_v-hu_\infty\cdot\nabla v+g, \\
&\mbox{div $v$}=0, \\
&v|_{\partial\Omega}=0, \\
&v\to 0 \quad\mbox{as $|x|\to\infty$}, \\
&v(\cdot,0)=v_0:=u_0-h(0)u_s \\
\end{split}
\label{NS-2}
\end{equation}
with the forcing term
\begin{equation}
g(x,t):=-h^\prime u_s+(h-h^2)(u_s+u_\infty)\cdot\nabla u_s,
\label{force-1}
\end{equation}
where $h^\prime=\frac{dh}{dt}$.
There would be several possibilities of choice of the auxiliary function
$\widetilde U(x,t)$ in \eqref{decompo},
which plays the same role as $e^{-tA}u_0$ in \cite{M}.
With any choice of $\widetilde U(x,t)$ at hand, we subtract this 
function from $v(x,t)$ to see that the remaining part 
$w(x,t):=v(x,t)-\widetilde U(x,t)$ together with the associated pressure
$p_w$ satisfies
\begin{equation}
\begin{split}
&\partial_tw+w\cdot\nabla w
+\widetilde U\cdot\nabla w+w\cdot\nabla\widetilde U
+h(u_s\cdot\nabla w+w\cdot\nabla u_s) \\
&\qquad\qquad =\Delta w-\nabla p_w-hu_\infty\cdot\nabla w+f, \\
&\mbox{div $w$}=0, \\
&w|_{\partial\Omega}=0, \\
&w\to 0 \quad\mbox{as $|x|\to\infty$}, \\
&w(\cdot,0)=w_0:=v_0-\widetilde U(\cdot,0),
\end{split}
\label{NS-3}
\end{equation}
for some vector field $f=f(x,t)$ as the new forcing term whenever
\[
\mbox{div $\widetilde U$}=0, \qquad
\widetilde U|_{\partial\Omega}=0, \qquad
\widetilde U\to 0\; (|x|\to\infty).
\]
Besides these conditions, the auxiliary function
$\widetilde U(x,t)$ must be taken so that
$f\in L^2_{loc}([0,\infty); H^{-1}(\Omega))$ as well as
$w_0\in L^2(\Omega)$ in order to look for $w(x,t)$ as the Leray-Hopf weak
solution with the strong energy inequality
\begin{equation}
\begin{split}
&\quad \frac{1}{2}\|w(t)\|^2_{L^2(\Omega)}
+\int_s^t\|\nabla w\|^2_{L^2(\Omega)} d\tau \\
&\leq\frac{1}{2}\|w(s)\|^2_{L^2(\Omega)}
+\int_s^t\langle (hu_s+\widetilde U)\otimes w, \nabla w \rangle d\tau
+\int_s^t\langle f, w\rangle d\tau
\end{split}
\label{SEI}
\end{equation}
for $s=0$, a.e. $s>0$ and all $t\geq s$.
As the auxiliary function, in this paper,
we will take the solution of the non-autonomous Oseen
initial value problem in the whole space $\mathbb R^3$ together with
a correction term,
see \eqref{auxiliary} and \eqref{auxi-eq}.
Then the forcing term $f(x,t)$ is given by \eqref{data} together
with \eqref{auxi-force}.

For the proof of attainability \eqref{attain-1} of the steady flow,
a crucial step is to find out a large instant $\bar t >0$ such that
$w(\bar t)$ is small enough in $L^3(\Omega)$.
It is then possible to construct a global strong solution from $\bar t$
with some decay properties,
particularly $L^\infty$-decay like $O(t^{-1/2})$,
which can be identified with the weak solution $w(t)$
by the strong energy inequality \eqref{SEI}.
Indeed this strategy itself is quite classical since the celebrated
paper by Leray \cite{L2},
but there are some details to make 
$\|w(\bar t)\|_{L^3(\Omega)}$ small
at a suitable $\bar t$.
This is by no means obvious
since the RHS of \eqref{SEI} is growing for $t\to\infty$.
One would raise the question whether Theorem \ref{starting} still holds 
for $u_0\in L^{3,\infty}(\Omega)$ 
(that is strictly larger than $L^{3,\infty}_0(\Omega)$). 
For such data, unfortunately, the behavior of the auxiliary function 
$\widetilde U(t)$ near $t=0$ is critical and this prevents us 
from constructing the weak solution $w(t)$.

It is also interesting to consider the opposite situation
(landing problem), in which the body is initially translating with
velocity $-u_\infty$ and it stops at an instant $T_0$ and is kept
afterwards at rest, that is,
\begin{equation}
h(t)=0 \quad\mbox{on $[T_0,\infty)$ for some $T_0>0$}; \qquad h(0)=1.
\label{h-3}
\end{equation}
The following result on the landing problem tells us that the rest
state is attainable no matter how large $u_\infty$ is.
\begin{theorem}
For every $u_\infty\in \mathbb R^3\setminus\{0\}$,
$u_0\in L^{3,\infty}_0(\Omega)$ with \eqref{compati} and
$h(t)$ satisfying \eqref{h-3} as well as \eqref{h-1},
problem \eqref{NS-1} subject to \eqref{IC} admits at least one solution $u(x,t)$
which enjoys
\begin{equation}
\|u(t)\|_{L^\infty(\Omega)}=O(t^{-1/2})
\label{attain-2}
\end{equation}
as $t\to\infty$.
\label{landing}
\end{theorem}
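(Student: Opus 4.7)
The argument parallels the proof of Theorem \ref{starting} but is simpler, because the limiting state is zero rather than $u_s$, so no smallness of $u_\infty$ is needed and the steady flow (which in any case may not exist for large $u_\infty$) plays no role. I would mimic the decomposition \eqref{decompo} but drop the $h(t)u_s$ term, writing $u=\widetilde U+w$, where $\widetilde U$ is the non-autonomous Oseen flow in $\mathbb R^3$ with drift $h(t)u_\infty\cdot\nabla$ starting from the zero extension of $u_0$, corrected by a compactly supported solenoidal lift that restores the boundary datum $-h(t)u_\infty$ on $\partial\Omega$; this secures $w_0:=u_0-\widetilde U(\cdot,0)\in L^2(\Omega)$. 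Then $w$ obeys a system analogous to \eqref{NS-3} (with the $hu_s$ terms suppressed), and by \eqref{h-3} the coefficients $hu_\infty\cdot\nabla$ and $h'$ vanish for $t\geq T_0$, so the new forcing $f$ splits into a piece supported in $[0,T_0]$ plus contributions that decay as $t\to\infty$ through the whole-space Oseen/heat decay of $\widetilde U$.

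Next, I would construct $w$ as a Leray-Hopf weak solution satisfying the strong energy inequality \eqref{SEI} (with $u_s=0$). On $[T_0,\infty)$ this reduces to the classical Navier-Stokes energy inequality perturbed only by the integrable coupling with $\widetilde U$, which implies $\int_0^\infty\|\nabla w\|_{L^2(\Omega)}^2\,d\tau<\infty$ and $\|w(t)\|_{L^2(\Omega)}\to 0$ along some sequence. The Gagliardo-Nirenberg inequality $\|w\|_{L^3(\Omega)}^2\leq C\|w\|_{L^2(\Omega)}\|\nabla w\|_{L^2(\Omega)}$ then produces some $\bar t\geq T_0$ at which $\|w(\bar t)\|_{L^3(\Omega)}$ is arbitrarily small; combined with the decay of $\|\widetilde U(\bar t)\|_{L^3(\Omega)}$, this also yields $\|u(\bar t)\|_{L^3(\Omega)}$ small.

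Finally, I would restart at $\bar t$: since $h\equiv 0$ on $[\bar t,\infty)$, $u$ solves the standard Navier-Stokes system in $\Omega$ with no-slip boundary condition and small $L^3$-initial datum. The classical theory of global strong solutions to the exterior Navier-Stokes problem with small data in $L^3(\Omega)$ (which is a particular case of, and even simpler than, what is used for Theorem \ref{starting} since the Oseen semigroup is replaced by the Stokes semigroup) provides a unique such $u$ enjoying $\|u(t)\|_{L^\infty(\Omega)}=O(t^{-1/2})$. Weak-strong uniqueness via \eqref{SEI} identifies this strong solution with the weak one built above, establishing \eqref{attain-2}.

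The main obstacle is the second step: during $[0,T_0]$ the boundary datum $-h(t)u_\infty$ is arbitrarily large, so neither $w$ nor $\widetilde U$ are small on that interval, and the right-hand side of \eqref{SEI} cannot be controlled by a smallness argument as in the linear regime. The decisive observation is that, for $t\geq T_0$, the forcing and all perturbative coefficients in the $w$-system are either switched off or decay, so that Navier-Stokes dissipation alone drives $\|w(t)\|_{L^3(\Omega)}$ to zero along a subsequence. It is precisely the absence of a nontrivial background flow to be stabilized that allows the conclusion without any restriction on the size of $u_\infty$.
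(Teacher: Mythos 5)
Your overall architecture coincides with the paper's: a whole-space auxiliary flow, a Leray--Hopf weak solution satisfying \eqref{SEI}, selection of a good instant $\bar t$, a small-data strong solution from $\bar t$, and weak--strong identification; and you are right that no smallness of $u_\infty$ enters anywhere. The one genuine structural difference is that you lift the boundary datum $-h(t)u_\infty$ by a compactly supported solenoidal (Hopf-type) extension instead of by $h(t)u_s$. This is viable, since the flux of $u_\infty$ through $\partial\Omega$ vanishes, and it spares you the Leray steady solution and its summability properties; the paper instead keeps $u=hu_s+\widetilde U+w$ so that the landing and starting problems share every formula, and only switches off the terms containing $h$ after $T_0$. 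Be careful, though: the zero extension of $u_0$ itself is \emph{not} divergence free in $\mathbb R^3$, because \eqref{compati} with $h(0)=1$ gives $\nu\cdot u_0=-\nu\cdot u_\infty\neq 0$ on $\partial\Omega$; what must be extended by zero is $u_0$ minus the lift at time $0$ (the paper's $v_0=u_0-h(0)u_s$). As literally written, your $\widetilde U$ starts from non-solenoidal whole-space data.

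There are two genuine gaps. First, the assertion that on $[T_0,\infty)$ the energy inequality yields $\int_0^\infty\|\nabla w\|_2^2\,d\tau<\infty$ and $\|w(t)\|_2\to 0$ along a sequence is unjustified, and this is exactly the point the paper flags as ``by no means obvious''. The forcing \eqref{data} contains $\widetilde U\cdot\nabla\widetilde U$, and $\|\widetilde U\otimes\widetilde U\|_2$ decays only like $o(t^{-1/4})$ (see \eqref{tensor-est}); this term originates from the \emph{large initial velocity} $u_0\in L^{3,\infty}_0$, not from the boundary datum or from $h$, so it is present in your scheme as well and persists for all time. Consequently the right-hand side of \eqref{SEI}, and hence the dissipation integral, may grow like $\varepsilon\sqrt t$ (cf. \eqref{growth-1}); ``Navier--Stokes dissipation alone'' does not drive $\|w\|_3$ to zero. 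The paper closes this by the quantitative growth bound, the mean-value selection of $\bar t\in(t,2t)$, the interpolation $\|w(\bar t)\|_3^4\le C\|w(\bar t)\|_2^2\,\|\nabla w(\bar t)\|_2^2$, and the a priori choice of $\varepsilon$ with $c_*\varepsilon^2\le\delta_2^4/2$; some such argument is indispensable. Second, the restart at $\bar t$ cannot be performed on $u$ with ``small $L^3$ datum'': $u(\bar t)=\widetilde U(\bar t)+w(\bar t)$, and $\widetilde U(\bar t)$ lies in $L^{3,\infty}\cap L^q$ for $q>3$ but in general neither in $L^3$ nor in $L^2$ (for the same reason $u$ has infinite energy and satisfies no energy inequality, so weak--strong uniqueness cannot be run on $u$ either). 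One must instead keep the $\widetilde U$-perturbed system \eqref{NS-3} on $[\bar t,\infty)$ and impose \eqref{small-2} on $w(\bar t)$ in $L^3$ together with \eqref{small-3} on $U(\bar t)$ in $L^{3,\infty}$, identifying the resulting strong solution with the weak solution $w$ --- this is what Propositions \ref{mild}--\ref{identify} accomplish.
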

The idea of the proof of Theorem \ref{landing} is the same as the one for
the starting problem.
For every $u_\infty\in \mathbb R^3\setminus\{0\}$ the steady problem
\eqref{NS-s} admits at least one solution $u_s(x)$ with finite Dirichlet
integral $\nabla u_s\in L^2(\Omega)$ (the Leray class), see Leray \cite{L1}.
It also follows from the result of Babenko \cite{Ba}, Galdi 
\cite{Ga92}, \cite{Ga-b},
Farwig and Sohr \cite{FaS98}
that any solution of the Leray class
eventually becomes the physically reasonable solution in the sense 
of Finn \cite{F1}, \cite{F2}.
Since we would have several solutions unless $u_\infty$ is small,
we fix a steady flow $u_s(x)$ arbitrarily
among them and look for the solution $u(x,t)$ to \eqref{NS-1}
of the form \eqref{decompo}.
It would be interesting to ask
sharper $L^\infty$-decay like $o(t^{-1/2})$ in \eqref{attain-2} as well as
\eqref{attain-1};
in fact, this is possible for \eqref{NS-1} with $u_\infty=0$ subject to
\eqref{IC} when $u_0\in L^{3,\infty}_0$ is small enough, see \cite{M0}.
On account of the presence of the forcing term
(especially $\widetilde U\cdot\nabla\widetilde U$, see \eqref{data}),
it does not seem to be clear whether
$\|w(t)\|_{L^\infty(\Omega)}=o(t^{-1/2})$, however,
one could take another way in which one constructs directly
a strong solution $v(t)$ on $[\bar t,\infty)$
with a suitable $\bar t$ for \eqref{NS-2},
instead of $w(t)$, such that
$\|v(t)\|_{L^\infty(\Omega)}=o(t^{-1/2})$ as $t\to\infty$.

This paper concerns the attainability,
while the stability of the steady flow was extensively studied,
see for instance \cite{Shi}, \cite{ES2}, \cite{Ko}
and the references therein.
The paper is organized as follows.
After some preliminaries in the next section,
we choose the auxiliary function $\widetilde U(x,t)$
in \eqref{decompo} and derive several properties in section \ref{auxi}.
In section \ref{weak} we construct a weak solution $w(t)$ to the initial value
problem \eqref{NS-3} and deduce the strong energy inequality \eqref{SEI}.
In section \ref{strong-iden} we make use of the $L^q$-$L^r$ 
decay estimate of the Oseen
semigroup (\cite{KS}) to construct a strong solution to \eqref{NS-3}
on $[\bar t,\infty)$ whenever $w(\bar t)$ is small in $L^3(\Omega)$.
We further show that this solution 
is identified with the weak solution on $[\bar t,\infty)$.
The final section is devoted to finding $\bar t >0$,
at which 
$\|w(\bar t)\|_{L^3(\Omega)}$
is actullay small enough,
to accomplish the proof of Theorems \ref{starting} and \ref{landing}.

\section{Preliminaries}\label{pre}

We start with introducing notation.
Given a domain $D\subset\mathbb R^3$, $1\leq q\leq\infty$, and integer
$k\geq 0$, we denote by $L^q(D)$ and 
by
$W^{k,q}(D)$
the standard Lebesgue and Sobolev spaces, respectively.
We simply write the norm
$\|\cdot\|_{q,D}=\|\cdot\|_{L^q(D)}$ and even
$\|\cdot\|_q=\|\cdot\|_{q,\Omega}$,
where $\Omega$ is the exterior domain under consideration.
Let $C_0^\infty(D)$ be the class of smooth functions with compact support in $D$.
We denote by $W^{k,q}_0(D)$ the completion of $C_0^\infty(D)$ in
$W^{k,q}(D)$, and by
$W^{-1,q}(D)$ the dual space of $W^{1,q^\prime}_0(D)$, where
$1/q^\prime+1/q=1$ and $q\in (1,\infty)$.
By $\langle\cdot,\cdot\rangle$ we denote various duality pairings
on $\Omega$.
When $q=2$, we write
$H^k(D)=W^{k,2}(D)$, $H_0^1(D)=W^{1,2}_0(D)$ and
$H^{-1}(D)=W^{-1,2}(D)$, respectively.

Let us introduce the Lorentz spaces
(for details, see Bergh and L\"ofstr\"om \cite{BL}).
Given a measurable function $f$ on a domain $D$, we set
\begin{equation*}
\begin{split}
&m_f(\tau)=\left|\{x\in D;\, |f(x)|>\tau\}\right|, \qquad \tau >0, \\
&f^*(t)=\inf\{\tau>0;\, m_f(\tau)\leq t\}, \qquad t>0,
\end{split}
\end{equation*}
where $|\cdot|$ stands for the Lebesgue measure.
Let $1<q<\infty$ and $1\leq r\leq \infty$, then the space 
$L^{q,r}(D)$ consists of all measurable functions $f$ on $D$
which satisfy
\begin{equation}
\begin{split}
&\left(\int_0^\infty\left\{t^{1/q}f^*(t)\right\}^r\frac{dt}{t}
\right)^{1/r}<\infty \qquad (1\leq r<\infty), \\
&\sup_{t>0}\,t^{1/q}\,f^*(t)<\infty \qquad (r=\infty).
\end{split}
\end{equation}
Each of those quantities is a quasi-norm, however,
it is possible to introduce an equivalent norm
$\|\cdot\|_{q,r,D}$ by use of the average function.
Then $L^{q,r}(D)$ endowed with $\|\cdot\|_{q,r,D}$ is a Banach space,
called the Lorentz space.
We simply write
$\|\cdot\|_{q,r}=\|\cdot\|_{q,r,\Omega}$.
Note that
$L^{q,q}(D)=L^q(D)$ and that
$L^{q,r_0}(D)\subset L^{q,r_1}(D)$ if $r_0\leq r_1$.
The space $L^{q,\infty}(D)$ is well known as the weak-$L^q$ space,
in which $C_0^\infty(D)$ is not dense.
Let us define the space
$L^{q,\infty}_0(D)$ by the completion of $C_0^\infty(D)$ in 
$L^{q,\infty}(D)$.
The Lorentz space can be also constructed via real interpolation
\[
L^{q,r}(D)=\big(L^1(D), L^\infty(D)\big)_{1-1/q,r}
\]
from which the reiteration theorem in the interpolation
theory
leads to
\[
L^{q,r}(D)=
\big(L^{q_0,r_0}(D), L^{q_1,r_1}(D)\big)_{\theta,r}
\]
together with
\begin{equation}
\|f\|_{q,r,D}\leq C\|f\|_{q_0,r_0,D}^{1-\theta}\|f\|_{q_1,r_1,D}^\theta
\label{interpo-1}
\end{equation}
for all $f\in L^{q_0,r_0}(D)\cap L^{q_1,r_1}(D)\subset L^{q,r}(D)$
provided that
\[
1<q_0<q<q_1<\infty, \quad
\frac{1}{q}=\frac{1-\theta}{q_0}+\frac{\theta}{q_1}, \quad
1\leq r_0, r_1, r\leq\infty.
\]
We have the Lorentz-H\"older and Lorentz-Sobolev inequalities,
but the only cases we need in this paper are
\begin{equation}
\|fg\|_{r,s,D}\leq \|f\|_{3,\infty,D}\|g\|_{q,s,D}, \qquad
\frac{1}{r}=\frac{1}{3}+\frac{1}{q},\quad  q,\,r\in (1,\infty),
\label{LH}
\end{equation}
\begin{equation}
\|g\|_{q_*,s}\leq C\|\nabla g\|_{q,s}, \qquad
\frac{1}{q_*}=\frac{1}{q}-\frac{1}{3},\quad q\in (1,3),
\label{LS}
\end{equation}
where $1\leq s\leq \infty$.
In what follows the same symbols for vector and scalar function spaces
are adopted as long as there is no confusion.

Let us introduce the solenoidal function spaces over the exterior 
domain $\Omega$.
The space $C^\infty_{0,\sigma}(\Omega)$ consists of all divergence
free vector fields whose components are in
$C_0^\infty(\Omega)$.
Let $1<q<\infty$.
We denote by $L^q_\sigma(\Omega)$
the completion of $C^\infty_{0,\sigma}(\Omega)$ in $L^q(\Omega)$.
Then it is characterized as
\[
L^q_\sigma(\Omega)=
\{u\in L^q(\Omega);\, \mbox{div $u$}=0,\, \nu\cdot u|_{\partial\Omega}=0\},
\]
where $\nu\cdot u|_{\partial\Omega}$ stands for the normal trace of $u$.
The space $L^q(\Omega)$ of vector fields admits the Helmholtz decomposition
\[
L^q(\Omega)=L^q_\sigma(\Omega)\oplus
\{\nabla p\in L^q(\Omega);\, p\in L^q_{loc}(\overline{\Omega})\}
\]
which was proved by Miyakawa \cite{Mi} and by Simader and Sohr \cite{SiS}.
When $q=2$, it is the orthogobal decomposition.
We have the same result for the whole space $\mathbb R^3$ as well.

By using the projection 
$\mathbb P: L^q(\Omega)\to L^q_\sigma(\Omega)$
associated with the decomposition above,
we define the Stokes operator $A$ by
\[
D_q(A)=W^{2,q}(\Omega)\cap W^{1,q}_0(\Omega)\cap L^q_\sigma(\Omega), \qquad
Af=-\mathbb P\Delta f.
\]
When $q=2$, it is a nonnegative self-adjoint operator in $L^2_\sigma(\Omega)$ and
\[
\langle A^{1/2}f, A^{1/2}g\rangle=\langle\nabla f, \nabla g\rangle, \qquad
\mbox{for $f,\, g\in D_2(A^{1/2})=H_{0,\sigma}^1(\Omega)$},
\]
where the space $H_{0,\sigma}^1(\Omega)$ denotes the completion of
$C_{0,\sigma}^\infty(\Omega)$ in $H^1(\Omega)$.
Due to Solonnikov \cite{Sol}, Giga \cite{Gi} and Farwig and Sohr \cite{FaS},
we know the generation of an analytic semigroup (the Stokes semigroup)
$\{e^{-tA}\}_{t\geq 0}$ on
$L^q_\sigma(\Omega)$.
Furthermore, it is uniformly bounded
$\|e^{-tA}f\|_q\leq C\|f\|_q$
by the result of Borchers and Sohr \cite{BS1}.
Given a constant vector $u_\infty\in\mathbb R^3$,
let us define the Oseen operator $A_{u_\infty}$ by
\[
D_q(A_{u_\infty})=D_q(A), \qquad
A_{u_\infty}f=-\mathbb P[\Delta f-u_\infty\cdot\nabla f].
\]
Then, by a simple perturbation argument, see Miyakawa \cite{Mi},
it is verified that the operator $-A_{u_\infty}$ also generates an
analytic semigroup (the Oseen semigroup)
$\{e^{-tA_{u_\infty}}\}_{t\geq 0}$ on
$L^q_\sigma(\Omega)$.
In \cite{KS} Kobayashi and Shibata (see also Enomoto and Shibata
\cite{ES1}, \cite{ES2}) developed the $L^q$-$L^r$ estimates
\begin{equation}
\|e^{-tA_{u_\infty}}f\|_r
\leq Ct^{-\alpha}\|f\|_q \qquad
(1<q\leq r\leq\infty,\, q\neq\infty),
\label{d-1}
\end{equation}
\begin{equation}
\|\nabla e^{-tA_{u_\infty}}f\|_r
\leq Ct^{-\alpha-1/2}\|f\|_q \qquad
(1<q\leq r\leq 3),
\label{d-2}
\end{equation}
for all $t>0$, where $\alpha=(3/q-3/r)/2$.
They also showed that, for each $K>0$,
the constant 
$C=C(K;\,q,r)>0$
in \eqref{d-1}--\eqref{d-2}
can be taken uniformly with respect to $u_\infty\in\mathbb R^3$ satisfying
$|u_\infty|\leq K$.
Therefore, their result includes the $L^q$-$L^r$ estimates of the Stokes semigroup
as a special case, however, even before, both \eqref{d-1} and \eqref{d-2}
(case $u_\infty=0$)
had been established by Iwashita \cite{I}, Chen \cite{C}
(case $r=\infty$)
and Maremonti and Solonnikov \cite{MSol}.
For later use, let us give a supplement about the Oseen operator, 
which is $m$-accretive in $L^2_\sigma(\Omega)$. 
Since both
$1+A_{u_\infty}$ 
and $1+A$ are invertible, we have
\[
\|Af\|_2\leq C\|(1+A_{u_\infty})f\|_2, \qquad
\|A_{u_\infty}f\|_2\leq C\|(1+A)f\|_2,
\]
for $f\in D_2(A)$. 
Then the Heinz-Kato inequality for $m$-accretive operators implies that
\begin{equation}
\|\nabla f\|_2=\|A^{1/2}f\|_2\leq C\|(1+A_{u_\infty})^{1/2}f\|_2
\label{HK}
\end{equation}
for all $f\in D_2(A_{u_\infty}^{1/2})=D_2(A^{1/2})=H_{0,\sigma}^1(\Omega)$ 
with some constant $C=C(|u_\infty|)>0$.

We next consider the boundary value problem for the equation
of continuity
\[
\mbox{div $w$}=f \;\;\mbox{in $D$}, \qquad
w|_{\partial D}=0,
\]
where $D$ is a bounded domain in $\mathbb R^3$ with Lipschitz boundary
$\partial D$.
Let $1<q<\infty$.
Given $f\in L^q(D)$ with compatibility condition 
$\int_D f=0$,
there are a lot of solutions, some of which were found by many authors,
see Galdi \cite[Notes for Chapter III]{Ga-b}.
Among them a particular solution discovered by Bogovskii \cite{B}
is useful to recover the solenoidal condition in a cut-off procedure
on account of some fine properties of his solution.
The operator $f\mapsto \mbox{his solution $w$}$,
called the Bogovskii operator, is well defined as follows
(for details, see Galdi \cite{Ga-b}, Borchers and Sohr \cite{BS2}):
there is a linear operator
$\mathbb B: C_0^\infty(D)\to C_0^\infty(D)^3$ such that,
for $1<q<\infty$ and $k\geq 0$ integers,
\begin{equation}
\|\nabla^{k+1}\mathbb Bf\|_{q,D}\leq C\|\nabla^kf\|_{q,D}
\label{bog-est-1}
\end{equation}
with some $C=C(D,q,k)>0$ and that
\begin{equation}
\mbox{div $\mathbb Bf$}=f \qquad
\mbox{if}\;\; \int_D f(x)\,dx=0,
\label{bogov}
\end{equation}
where the constant $C$ is invariant with respect to dilation of the domain $D$.
By continuity, $\mathbb B$ is extended uniquely to a bounded operator
from $W^{k,q}_0(D)$ to $W^{k+1,q}_0(D)^3$.
It is obvious by real interpolation that several estimates in the Lorentz
norm similar to \eqref{bog-est-1} are available as well;
for instance, we have
\begin{equation}
\|\nabla \mathbb Bf\|_{q,\infty,D}\leq C\|f\|_{q,\infty,D},
\label{bog-est-L}
\end{equation}
for every $f\in L^{q,\infty}(D)$ and $q\in (1,\infty)$.
By Geissert, Heck and Hieber \cite[Theorem 2.5]{GHH},
$\mathbb B$ can be also extended to a bounded operator 
from $W^{1,q^\prime}(D)^*$ to $L^q(D)^3$, that is,
\begin{equation}
\|\mathbb Bf\|_{q,D}\leq C\|f\|_{W^{1,q^\prime}(D)^*},
\label{bog-est-2}
\end{equation}
where $1/q^\prime+1/q=1$.
Note that this is not true from $W^{-1,q}(D)$ 
to $L^q(D)^3$, see Galdi \cite[Chapter III]{Ga-b}.
Finally, we mention a sort of commutator estimate between $\mathbb B$
and the Laplacian.
Let $f\in W^{2,q}(D)$.
We fix $\eta\in C_0^\infty(D)$ to find
\begin{equation}
\|\Delta\mathbb B[\eta f]-\mathbb B[\Delta(\eta f)]\|_{q,D}
\leq C\|f\|_{q,D}.
\label{bog-est-3}
\end{equation}
Indeed this is rather restricted form, but it is enough for later use,
see Lemma \ref{F-class}.
By the condition above on the domain $D$, see
Galdi \cite[Lemma III.3.4]{Ga-b}, analysis can be reduced to
the case in which $D$ is star-shaped with respect to a ball
$B$, where $\overline{B}\subset D$.
In this case, the solution found by Bogovskii \cite{B} is of
the form (in 3D case)
\[
\mathbb B[\eta f](x)=
\int_D\Gamma_\kappa(x-y,y)(\eta f)(y)dy
\]
with
\[
\Gamma_\kappa(z,y)=z\int_1^\infty
\kappa(y+\tau z)\tau^2 d\tau,
\]
where $\kappa\in C_0^\infty(B)$ is fixed so that
$\int_B\kappa =1$.
Set
\[
{\cal B}_j[\eta f](x)=
\int_D\Gamma_{\partial_j\kappa}(x-y,y)(\eta f)(y)dy \qquad (j=1,2,3).
\]
Then we have
\[
\partial_j\mathbb B[\eta f]-\mathbb B[\partial_j(\eta f)]
={\cal B}_j[\eta f]
\]
for each $j=1,2,3$, and, thereby,
\[
\Delta\mathbb B[\eta f]-\mathbb B[\Delta(\eta f)]
=\sum_j{\cal B}_j[\partial_j(\eta f)]
+\sum_j\partial_j{\cal B}_j[\eta f].
\]
Since the operator ${\cal B}_j$ satisfies the same estimates as in
\eqref{bog-est-1} and \eqref{bog-est-2} in spite of
$\int_B \partial_j\kappa\neq 1$ (which is related only to whether
\eqref{bogov} holds),
the formula above leads to \eqref{bog-est-3}.

\section{Auxiliary function}\label{auxi}

In this section we construct an auxiliary function $\widetilde U(x,t)$ in 
\eqref{decompo}.
We begin with knowledge about the steady problem \eqref{NS-s}.
Due to Finn \cite{F2}, Galdi \cite{Ga-b}, Farwig \cite{Fa}
and Shibata \cite{Shi}, there are constants $\delta_0>0$,
$C=C(q)>0$ and $C^\prime=C^\prime(r)>0$
such that the steady problem
\eqref{NS-s} admits a unique solution
\begin{equation}
\begin{split}
&u_s\in L^q(\Omega)\cap C^\infty(\Omega), \quad
\|u_s\|_q\leq 
C|u_\infty|^{1/2},
\quad \forall q\in (2,\infty], \\
&\nabla u_s\in L^r(\Omega), \quad
\|\nabla u_s\|_r\leq 
C^\prime |u_\infty|^{1/2},
\quad \forall r\in (4/3,\infty], \\
&\mbox{provided $0<|u_\infty|\leq\delta_0$}.
\end{split}
\label{steady}
\end{equation}
Specifically, the rate $|u_\infty|^{1/2}$ above was deduced by Shibata 
as a consequence of his anisotropic pointwise estimates \cite[Theorem 1.1]{Shi}.
For the starting problem, we take this solution $u_s$.
For the landing problem, there is at least one solution to
\eqref{NS-s} having finite Dirichlet integral for every
$u_\infty\in\mathbb R^3\setminus\{0\}$ (see \cite{L1})
and, from now on, we fix a solution $u_s$;
then, it possesses the summability properties in \eqref{steady},
no matter which we may choose,
see Galdi \cite[Section X.6]{Ga-b}.

Given $u_0\in L^{3,\infty}_0(\Omega)$ with \eqref{compati},
we set
$v_0=u_0-h(0)u_s\in L^{3,\infty}_0(\Omega)$
which fulfills $\nu\cdot v_0|_{\partial\Omega}=0$ as well as
$\mbox{div $v_0$}=0$, see \eqref{NS-2}.
We take the extension $\bar v_0$ of $v_0$ by setting zero
outside $\Omega$; then, we have
$\bar v_0\in L^{3,\infty}_0(\mathbb R^3)$ with
$\mbox{div $\bar v_0$}=0$.
We fix $R>0$ such that
\begin{equation}
\mathbb R^3\setminus\Omega \subset B_R:=\{x\in\mathbb R^3; |x|<R\},
\label{cov}
\end{equation}
and take a cut-off function $\phi_0\in C_0^\infty(B_{2R})$
so that $\phi_0(x)=1$ in $B_R$.
Set
\begin{equation*}
\begin{split}
&\bar g(x,t)=(1-\phi_0(x))g(x,t), \\
&G(y,t)=\bar g\left(y+u_\infty\int_0^th(\tau)d\tau, t\right), \\
\end{split}
\end{equation*}
where $g$ is given by \eqref{force-1}.
Then it follows from \eqref{steady}
that $\bar g(t)$ belongs to $L^q(\mathbb R^3)\cap C^\infty(\mathbb R^3)$
for every $q\in (2,\infty]$ and, therefore, so does $G(t)$.
We also have
\[
\mbox{div $\bar g$}=
(1-\phi_0)(h-h^2)\sum_j(\partial_ju_s)\cdot\nabla u_{sj}
-g\cdot\nabla\phi_0,
\]
and, thereby, $\mbox{div $G(t)$}\in L^q(\mathbb R^3)$
for every $q\in [1,\infty]$,
which together with the Hardy-Littlewood-Sobolev inequality implies that
\[
Q(\cdot,t):=
\left(\frac{-1}{4\pi|\cdot|}*\mbox{div $G$}\right)(\cdot,t)
\in L^q(\mathbb R^3),\quad\forall q\in (3,\infty),
\]
where $*$ stands for the convolution on $\mathbb R^3$.
Set
\[
\mathbb P_{\mathbb R^3}G(t)=G(t)-\nabla Q(t)
\]
which satisfies
\begin{equation}
\begin{split}
\|\mathbb P_{\mathbb R^3}G(t)\|_{q,\mathbb R^3}
&\leq \|G(t)\|_{q,\mathbb R^3}+\|\nabla Q(t)\|_{q,\mathbb R^3} \\
&\leq C\|G(t)\|_{q,\mathbb R^3}
=C\|\bar g(t)\|_{q,\mathbb R^3}
\leq C\|g(t)\|_q
\leq CM_q
\end{split}
\label{G-est}
\end{equation}
for every $q\in (2,\infty)$ with
\begin{equation}
M_q=
|h^\prime|_\infty \|u_s\|_q +
(|h|_\infty+|h|_\infty^2)(\|u_s\|_\infty+|u_\infty|)\|\nabla u_s\|_q,
\label{M}
\end{equation}
where
\[
|h|_\infty=\sup_{t\geq 0}|h(t)|, \qquad |h^\prime|_\infty=\sup_{t\geq 0}|h^\prime(t)|.
\]
By using the heat semigroup
\[
e^{t\Delta}=(4\pi t)^{-3/2}e^{-|\cdot|^2/4t}* (\cdot),
\]
we set
\begin{equation}
\begin{split}
&V(t)=\int_0^te^{(t-\tau)\Delta}(\mathbb P_{\mathbb R^3}G)(\tau)d\tau, \\
&W(t)=e^{t\Delta}\bar v_0 +V(t).
\end{split}
\label{heat}
\end{equation}
Then the pair $W(y,t), Q(y,t)$ solves the Stokes initial value problem
\begin{equation}
\begin{split}
&\partial_tW=\Delta W-\nabla Q+G, \quad\mbox{div $W$}=0
\qquad (y\in\mathbb R^3,\,t>0), \\
&W\to 0 \quad\mbox{as $|y|\to\infty$}, \\
&W(y,0)=\bar v_0(y).
\end{split}
\label{W-eq}
\end{equation}
By \eqref{h-1} we know
\[
G\in C^\theta([0,\infty); L^q(\mathbb R^3)), \quad\forall q\in (2,\infty],
\]
which implies that
\begin{equation}
\begin{split}
&W\in C^1((0,\infty); L^{3,\infty}(\mathbb R^3)\cap L^q_\sigma(\mathbb R^3)),
\quad\forall q\in (3,\infty), \\
&\nabla^2 W\in 
C((0,\infty); L^{3,\infty}(\mathbb R^3)\cap L^q(\mathbb R^3)),
\quad\forall q\in (3,\infty).
\end{split}
\label{W-reg}
\end{equation}
We also find
\begin{equation}
\nabla W\in C^\mu_{loc}
((0,\infty); L^{3,\infty}(\mathbb R^3)\cap L^q(\mathbb R^3)),
\quad\forall q\in (3,\infty),\,\forall\mu\in (0,1/2).
\label{W-hoelder}
\end{equation}
We then make the change of variable as
\begin{equation}
\begin{split}
&U(x,t)=W\left(x-u_\infty\int_0^th(\tau)d\tau, t\right), \\
&P(x,t)=Q\left(x-u_\infty\int_0^th(\tau)d\tau, t\right),
\end{split}
\label{os-sol}
\end{equation}
to see from \eqref{W-reg}--\eqref{W-hoelder} that
\begin{equation}
\left\{
\begin{array}{ll}
U\in C^1((0,\infty); L^{3,\infty}(\mathbb R^3)\cap L^q_\sigma(\mathbb R^3)),
&\forall q\in (3,\infty), \\
\nabla^2U\in 
C((0,\infty); L^{3,\infty}(\mathbb R^3)\cap L^q(\mathbb R^3)),
&\forall q\in (3,\infty), \\
\nabla U\in C^\mu_{loc}
((0,\infty); L^{3,\infty}(\mathbb R^3)\cap L^q(\mathbb R^3)),
\quad &\forall q\in (3,\infty),\,\forall\mu\in (0,1/2),
\end{array}
\right.
\label{U-reg}
\end{equation}
and that the pair \eqref{os-sol} satisfies the non-autonomous
Oseen initial value problem
\begin{equation}
\begin{split}
&\partial_tU=\Delta U-\nabla P-hu_\infty\cdot\nabla U+\bar g,
\quad\mbox{div $U$}=0 \qquad 
(x\in\mathbb R^3,\,t>0), \\
&U\to 0 \quad\mbox{as $|x|\to\infty$}, \\
&U(x,0)=\bar v_0(x).
\end{split}
\label{oseen}
\end{equation}

Let us take another cut-off function $\phi\in C_0^\infty(B_{3R})$
so that $\phi(x)=1$ in $B_{2R}$.
Our auxiliary function is then given by
\begin{equation}
\widetilde U(x,t)=
(1-\phi(x))U(x,t)+\mathbb B[U(\cdot,t)\cdot\nabla\phi](x)
=U(x,t)+E(x,t),
\label{auxiliary}
\end{equation}
see \eqref{correc} below,
where $\mathbb B$ denotes the Bogovskii operator in the bounded domain
$A_R=B_{3R}\setminus\overline{B_R}$.
Since $\mbox{div $U$}=0$, we observe
$\int_{A_R}U\cdot\nabla\phi =0$,
which yields $\mbox{div $\widetilde U$}=0$.
By \eqref{U-reg} we find that
\begin{equation}
\left\{
\begin{array}{ll}
\widetilde U\in 
C^1((0,\infty); L^{3,\infty}(\Omega)\cap L^q_\sigma(\Omega)),
&\forall q\in (3,\infty), \\
\nabla^2\widetilde U\in
C((0,\infty); L^{3,\infty}(\Omega)\cap L^q(\Omega)),
&\forall q\in (3,\infty),   \\
\nabla\widetilde U\in C^\mu_{loc}
((0,\infty); L^{3,\infty}(\Omega)\cap L^q(\Omega)),
\quad &\forall q\in (3,\infty),\,\forall\mu\in (0,1/2),
\end{array}
\right.
\label{auxi-reg}
\end{equation}
and that
\begin{equation}
\begin{split}
&\partial_t\widetilde U=\Delta\widetilde U-\nabla P
-hu_\infty\cdot\nabla\widetilde U+g-F, \quad 
\mbox{div $\widetilde U$}=0 \qquad  (x\in\Omega,\,t>0),  \\
&\widetilde U|_{\partial\Omega}=0, \\
&\widetilde U\to 0 \quad\mbox{as $|x|\to\infty$}, \\
&\widetilde U(\cdot,0)=(1-\phi)v_0+\mathbb B[v_0\cdot\nabla\phi],
\end{split}
\label{auxi-eq}
\end{equation}
with
\begin{equation}
F(x,t):=\phi_0g-\partial_tE+\Delta E-hu_\infty\cdot\nabla E,
\label{auxi-force}
\end{equation}
where
\begin{equation}
E=-\phi U+\mathbb B[U\cdot\nabla\phi].
\label{correc}
\end{equation}

For later use, we collect some properties of $U$ and $\widetilde U$.
\begin{lemma}
Let $j=0,1$.
The function $U$ given by \eqref{os-sol} enjoys
\begin{equation}
\begin{split}
&\|U(t)\|_{\infty,\mathbb R^3}
\leq C(\|v_0\|_{3,\infty}+M_3)\,t^{-1/2}, \\
&\|\nabla^j U(t)\|_{r,\mathbb R^3}
\leq C(\|v_0\|_{3,\infty}+M_3)\,t^{-1/2+3/2r-j/2}, \quad
\forall r\in (3,\infty), \\
&\|\nabla^j U(t)\|_{3,\infty,\mathbb R^3}
\leq C(\|v_0\|_{3,\infty}+M_3)\,t^{-j/2}
\end{split}
\label{U-est}
\end{equation}
for all $t>0$,
where $M_3$ is as in \eqref{M}, and
\begin{equation}
\begin{split}
&\|U(t)\|_{r,\mathbb R^3}=o(t^{-1/2+3/2r}), \quad\forall r\in (3,\infty],\\
&\|U(t)\|_{3,\infty,\mathbb R^3}=o(1),
\end{split}
\label{U-decay}
\end{equation}
as $t\to\infty$.
\label{auxi-lem}
\end{lemma}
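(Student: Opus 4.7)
The plan is to transfer all estimates to $W$ via the change of variables $y=x-u_\infty\int_0^t h(\tau)\,d\tau$, which is a translation on $\mathbb R^3$ at each fixed time and hence leaves every $L^r(\mathbb R^3)$ and $L^{3,\infty}(\mathbb R^3)$ norm of $W$ and $\nabla W$ invariant. Using the decomposition $W(t)=e^{t\Delta}\bar v_0+V(t)$ from \eqref{heat}, the proof then splits into a bound for $e^{t\Delta}\bar v_0$ and one for the Duhamel term $V(t)$.

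For the linear part I would combine the convolution representation $e^{t\Delta}f=k_t*f$ with Young's inequality in Lorentz spaces together with the kernel scaling $\|\nabla^j k_t\|_{p,1,\mathbb R^3}\le Ct^{-3(1-1/p)/2-j/2}$. Choosing $p$ so that $1+1/r=1/p+1/3$ yields
\[
\|\nabla^j e^{t\Delta}\bar v_0\|_{r,\mathbb R^3}\le Ct^{-1/2+3/(2r)-j/2}\|\bar v_0\|_{3,\infty,\mathbb R^3},
\]
while $p=1$ gives $\|\nabla^j e^{t\Delta}\bar v_0\|_{3,\infty,\mathbb R^3}\le Ct^{-j/2}\|\bar v_0\|_{3,\infty,\mathbb R^3}$; since the extension by zero satisfies $\|\bar v_0\|_{3,\infty,\mathbb R^3}=\|v_0\|_{3,\infty}$, this transfers the bound to the data on $\Omega$.

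The decisive structural observation for the Duhamel term is that under either \eqref{h-2} or \eqref{h-3} we have $h^\prime\equiv 0$ and $h-h^2\equiv 0$ on $[T_0,\infty)$, so \eqref{force-1} forces $g(t)=0$ for $t\ge T_0$ and hence $\mathbb P_{\mathbb R^3}G(\tau)=0$ for $\tau\ge T_0$. Consequently $V(t)=\int_0^{\min(t,T_0)}e^{(t-\tau)\Delta}(\mathbb P_{\mathbb R^3}G)(\tau)\,d\tau$, which a priori eliminates any possibility of growth in time. Applying the same Lorentz--Young bounds to the integrand with exponent $q=3$ and using \eqref{G-est} to control $\|\mathbb P_{\mathbb R^3}G(\tau)\|_{3,\mathbb R^3}\le CM_3$, one obtains $\|\nabla^jV(t)\|_{r,\mathbb R^3}\le CM_3T_0\,t^{-1/2+3/(2r)-j/2}$ for $t>2T_0$ (where $t-\tau\ge t/2$ on the integration interval), and for $0<t\le 2T_0$ the time singularity $(t-\tau)^{-1/2+3/(2r)-j/2}$ is integrable on $[0,t]$ and the resulting bound is majorized by the same expression up to a constant depending on $T_0$. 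The $L^{3,\infty}$ estimate with $j=0,1$ is obtained in the same way.

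Finally, for the little-$o$ decay \eqref{U-decay} I would exploit density of $C_0^\infty(\mathbb R^3)$ in $L^{3,\infty}_0(\mathbb R^3)$: given $\varepsilon>0$, pick $\psi\in C_0^\infty(\mathbb R^3)$ with $\|\bar v_0-\psi\|_{3,\infty,\mathbb R^3}<\varepsilon$; then $\|e^{t\Delta}\psi\|_{r,\mathbb R^3}\le Ct^{-3(1/p-1/r)/2}\|\psi\|_p$ for any $p\in[1,3)$ decays strictly faster than $t^{-1/2+3/(2r)}$, while the remainder contributes at most $C\varepsilon\,t^{-1/2+3/(2r)}$; letting $\varepsilon\to 0$ gives $\|e^{t\Delta}\bar v_0\|_{r,\mathbb R^3}=o(t^{-1/2+3/(2r)})$, and $\|e^{t\Delta}\bar v_0\|_{3,\infty,\mathbb R^3}=o(1)$ follows analogously. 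For the Duhamel part, since by \eqref{G-est} one has $\|\mathbb P_{\mathbb R^3}G(\tau)\|_{q,\mathbb R^3}\le CM_q<\infty$ for every $q\in(2,\infty)$, repeating the previous estimate with some $q\in(2,3)$ yields the strictly faster decay $t^{-3(1/q-1/r)/2}$. The main technical obstacle is mostly bookkeeping of the various cases $(r,j)$ across small and large $t$; the crucial structural ingredient that makes the argument close is the vanishing of $g$ past $T_0$, which alone turns an otherwise time-growing Duhamel integral into one that decays at the claimed rate.
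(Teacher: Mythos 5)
Your proposal is correct and follows essentially the same route as the paper: reduce to $W$ by translation invariance, estimate $e^{t\Delta}\bar v_0$ via heat-kernel convolution bounds in Lorentz spaces, exploit that \eqref{h-2} (or \eqref{h-3}) forces $g\equiv 0$ on $[T_0,\infty)$ so that the Duhamel integral is supported on $[0,T_0]$, and obtain the $o$-decay by approximating $\bar v_0$ in $L^{3,\infty}_0$ by compactly supported smooth data and by using \eqref{G-est} with an exponent $q\in(2,3)$ for $V$. The only differences are cosmetic (Lorentz--Young with a kernel bound versus the paper's Hausdorff--Young plus real interpolation, and a direct estimate of $\|e^{t\Delta}\psi\|_r$ versus the paper's detour through $\limsup_t\|e^{t\Delta}\bar v_0\|_{3,\infty}$ and the semigroup property).
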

\begin{proof}
Since
\[
\|\nabla^jU(t)\|_{r,\mathbb R^3}=\|\nabla^jW(t)\|_{r,\mathbb R^3}, \qquad
\|\nabla^jU(t)\|_{3,\infty,\mathbb R^3}=\|\nabla^jW(t)\|_{3,\infty,\mathbb R^3},
\]
it suffices to show the desired properties for $W(t)$ given by \eqref{heat}.
By the Hausdorff-Young inequality and by real interpolation,
we easily see that
\[
\|\nabla^je^{t\Delta}\bar v_0\|_{r,\mathbb R^3}
\leq Ct^{-1/2+3/2r-j/2}\|v_0\|_{3,\infty}, \quad
\|\nabla^je^{t\Delta}\bar v_0\|_{3,\infty,\mathbb R^3}
\leq Ct^{-j/2}\|v_0\|_{3,\infty},
\]
for $3<r\leq\infty$.
We use the assumption \eqref{h-2} and \eqref{G-est} with $q=3$ to observe
\begin{equation}
\|\nabla^jV(t)\|_{r,\mathbb R^3}\leq CM_3\int_0^{T_0}(t-\tau)^{-1/2+3/2r-j/2}d\tau
\leq CM_3T_0\,t^{-1/2+3/2r-j/2}
\label{V-large}
\end{equation}
for $t\geq 2T_0$, while
\begin{equation}
\|\nabla^jV(t)\|_{r,\mathbb R^3}\leq CM_3T_0^{1/2+3/2r-j/2}
\label{V-short1}
\end{equation}
for $t\in (0,2T_0]$ (except for the case $(j,r)=(1,\infty)$).
Similarly, we obtain
\[
\|\nabla^jV(t)\|_{3,\infty,\mathbb R^3}\leq CM_3T_0t^{-j/2}
\]
for $t\geq 2T_0$ and
\[
\|\nabla^jV(t)\|_{3,\infty,\mathbb R^3}\leq CM_3T_0^{1-j/2}
\]
for $t\in (0,2T_0]$.
This shows \eqref{U-est}.

The sharp behavior \eqref{U-decay} was observed 
by \cite{Ma13}, but let us give the proof for completeness.
For $v_0\in L^{3,\infty}_0(\Omega)$ and
every $\varepsilon >0$, one can take
$v_{0\varepsilon}\in C^\infty_0(\Omega)\subset
C^\infty_0(\mathbb R^3)$ such that
\begin{equation}
\|v_{0\varepsilon}-\bar v_0\|_{3,\infty,\mathbb R^3}
=\|v_{0\varepsilon}-v_0\|_{3,\infty}\leq\varepsilon.
\label{appro}
\end{equation}
Then we have
\[
\|e^{t\Delta}\bar v_0\|_{3,\infty,\mathbb R^3}
\leq C\|v_{0\varepsilon}\|_{1,\mathbb R^3}\,t^{-1}+C\varepsilon,
\]
yielding
$\limsup_{t\to\infty}\|e^{t\Delta}\bar v_0\|_{3,\infty,\mathbb R^3}
\leq C\varepsilon$, which also implies
\[
\|e^{t\Delta}\bar v_0\|_{r,\mathbb R^3}
\leq Ct^{-1/2+3/2r}\|e^{\frac{t}{2}\Delta}\bar v_0\|_{3,\infty,\mathbb R^3}
=o(t^{-1/2+3/2r})
\]
as $t\to\infty$.
In \eqref{V-large} one can use \eqref{G-est} with $p\in (2,3)$
to replace $M_3$ by $M_p$; then,
\begin{equation*}
\begin{split}
&\|V(t)\|_{r,\mathbb R^3}
\leq CM_p T_0\,t^{-3/2p+3/2r},  \\
&\|V(t)\|_{3,\infty,\mathbb R^3}
\leq CM_p T_0\,t^{-3/2p+1/2},
\end{split}
\end{equation*}
for $t\geq 2T_0$, which proves \eqref{U-decay}.
\end{proof}
\begin{corollary}
Let $j=0,1$.
The function $\widetilde U$ given by \eqref{auxiliary} enjoys
\begin{equation}
\|\widetilde U(t)\|_r
\leq C(\|v_0\|_{3,\infty}+M_3)\,t^{-1/2+3/2r}, \quad\forall r\in (3,\infty],
\label{auxi-est-1}
\end{equation}
\begin{equation}
\|\nabla\widetilde U(t)\|_r
\leq C(\|v_0\|_{3,\infty}+M_3)\,t^{-1+3/2r}(1+t)^{1/2-3/2r},\quad
\forall r\in (3,\infty),
\label{auxi-est-3}
\end{equation}
\begin{equation}
\|\nabla^j \widetilde U(t)\|_{3,\infty}
\leq C(\|v_0\|_{3,\infty}+M_3)\,t^{-j/2},
\label{auxi-est-2}
\end{equation}
for all $t>0$,
where $M_3$ is as in \eqref{M}, and
\begin{equation}
\begin{split}
&\|\widetilde U(t)\|_r=o(t^{-1/2+3/2r}), \quad\forall r\in (3,\infty], \\
&\|\widetilde U(t)\|_{3,\infty}=o(1),
\end{split}
\label{auxi-decay}
\end{equation}
as $t\to\infty$.

Let $\bar t\in [T_0,\infty)$, where $T_0$ is as in \eqref{h-2} or \eqref{h-3},
then
\begin{equation}
\begin{split}
&\|\widetilde U(t)\|_r
\leq C(t-\bar t)^{-1/2+3/2r}
\|U(\bar t)\|_{3,\infty,\mathbb R^3}, \quad \forall r\in (3,\infty], \\
&\|\nabla \widetilde U(t)\|_{3,\infty}
\leq C(t-\bar t)^{-1/2}
\|U(\bar t)\|_{3,\infty,\mathbb R^3},
\end{split}
\label{auxi-bar}
\end{equation}
for all $t>\bar t$.
\label{auxi-cor}
\end{corollary}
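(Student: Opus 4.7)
The plan is to decompose the auxiliary function as $\widetilde U = U + E$ with $E = -\phi U + \mathbb{B}[U\cdot\nabla\phi]$ supported in $\overline{B_{3R}}$, bound $U$ directly using Lemma \ref{auxi-lem} on $\mathbb{R}^3$, and bound $E$ by exploiting its bounded support together with the Bogovskii estimates \eqref{bog-est-1} and \eqref{bog-est-L}; since $\mathbb{B}f$ vanishes on $\partial A_R$, Poincar\'e also gives $\|\mathbb{B}f\|_{q,A_R}\le C\|\nabla\mathbb{B}f\|_{q,A_R}\le C\|f\|_{q,A_R}$.

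For \eqref{auxi-est-1} and the $j=0$ case of \eqref{auxi-est-2}, the bound $\|E(t)\|_r \le C\|U(t)\|_{r,B_{3R}}\le C\|U(t)\|_{r,\mathbb{R}^3}$ combined with \eqref{U-est} settles the case $r\in(3,\infty)$. For $r=\infty$ I would use the Sobolev embedding $W^{1,q}(A_R)\hookrightarrow L^\infty(A_R)$ with some $q>3$ to upgrade the Bogovskii bound to $\|\mathbb{B}[U\cdot\nabla\phi]\|_{\infty}\le C\|U\|_{q,A_R}\le C\|U\|_{\infty,\mathbb{R}^3}$. The $j=1$ case of \eqref{auxi-est-2} follows similarly from \eqref{bog-est-L} and the Lorentz bound in \eqref{U-est}, while the decay \eqref{auxi-decay} is then immediate from \eqref{U-decay}.

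The most delicate step is \eqref{auxi-est-3}. Writing $\nabla E = -(\nabla\phi)\,U - \phi\,\nabla U + \nabla\mathbb{B}[U\cdot\nabla\phi]$ and applying \eqref{bog-est-1},
\[
\|\nabla\widetilde U(t)\|_r \;\le\; \|\nabla U(t)\|_{r,\mathbb{R}^3} + C\bigl(\|U(t)\|_{r,B_{3R}} + \|\nabla U(t)\|_{r,B_{3R}}\bigr).
\]
For $t\le 1$ the dominant bound is $\|\nabla U(t)\|_{r,\mathbb{R}^3}\le Ct^{-1+3/2r}$ from Lemma \ref{auxi-lem}, which matches $t^{-1+3/2r}(1+t)^{1/2-3/2r}\sim t^{-1+3/2r}$. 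For $t\ge 1$ the bounded support gives $\|U(t)\|_{r,B_{3R}}\le |B_{3R}|^{1/r}\|U(t)\|_{\infty,\mathbb{R}^3}\le Ct^{-1/2}$, while $\|\nabla U(t)\|_{r,\mathbb{R}^3}\le Ct^{-1+3/2r}\le Ct^{-1/2}$ since $-1+3/2r<-1/2$ for $r>3$; this matches $t^{-1+3/2r}(1+t)^{1/2-3/2r}\sim t^{-1/2}$. Combining the two regimes yields \eqref{auxi-est-3}.

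For \eqref{auxi-bar}, the key observation is that $g\equiv 0$ on $[T_0,\infty)$ in both the starting case (where $h=1$, so $h'=0$ and $h-h^2=0$) and the landing case (where $h=0$); hence $\bar g$ and $G$ vanish past $T_0$, and \eqref{heat}--\eqref{W-eq} yield $W(t)=e^{(t-\bar t)\Delta}W(\bar t)$ for $t\ge\bar t\ge T_0$ (the Stokes semigroup on $\mathbb{R}^3$ coincides with the heat semigroup on divergence-free data). Standard heat-semigroup estimates with $L^{3,\infty}$-data then give
\[
\|W(t)\|_{r,\mathbb{R}^3}\le C(t-\bar t)^{-1/2+3/2r}\|W(\bar t)\|_{3,\infty,\mathbb{R}^3},\qquad r\in(3,\infty],
\]
and $\|\nabla W(t)\|_{3,\infty,\mathbb{R}^3}\le C(t-\bar t)^{-1/2}\|W(\bar t)\|_{3,\infty,\mathbb{R}^3}$, which transfer to $U$ via the isometric translation \eqref{os-sol}. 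Reapplying the Bogovskii bounds of the first step then produces \eqref{auxi-bar}. The main technical point, rather than any deep obstacle, is the small-time/large-time bookkeeping in \eqref{auxi-est-3}, cleanly handled by using the bounded support of $E$.
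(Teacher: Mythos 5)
Your proof is correct and follows essentially the same route as the paper: decompose $\widetilde U=U+E$ with $E$ supported in $\overline{B_{3R}}$, control $E$ via the Bogovskii estimates \eqref{bog-est-1}, \eqref{bog-est-L} so that everything reduces to Lemma \ref{auxi-lem}, and for \eqref{auxi-bar} use that $g\equiv 0$ past $T_0$ forces $W(t)=e^{(t-\bar t)\Delta}W(\bar t)$. The only cosmetic differences are that you spell out the small-time/large-time bookkeeping for \eqref{auxi-est-3} (which the paper leaves implicit in "Lemma \ref{auxi-lem} concludes") and use a Sobolev embedding where the paper invokes Gagliardo--Nirenberg for $r=\infty$.
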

\begin{proof}
On account of \eqref{bog-est-1}
(combined with the Gagliardo-Nirenberg inequality for $r=\infty$) we have
\begin{equation*}
\begin{split}
&\|\widetilde U(t)\|_r\leq C\|U(t)\|_{r,\mathbb R^3}, \\
&\|\nabla\widetilde U(t)\|_r
\leq C\|\nabla U(t)\|_{r,\mathbb R^3}+C\|U(t)\|_{\infty,A_R}.
\end{split}
\end{equation*}
for $r\in (3,\infty]$ as well as the similar inequalities for
$\|\nabla^j \widetilde U(t)\|_{3,\infty}$, see \eqref{bog-est-L}.
Then Lemma \ref{auxi-lem} concludes \eqref{auxi-est-1}, \eqref{auxi-est-2},
\eqref{auxi-est-3} and \eqref{auxi-decay}.

By \eqref{h-2} or \eqref{h-3} we have
$G(y,t)=0$ for $t\geq T_0$ and, therefore, deduce from \eqref{W-eq} that
$W(t)=e^{(t-\bar t)\Delta}W(\bar t)$.
In view of \eqref{os-sol} and \eqref{auxiliary} we find
\begin{equation*}
\|\widetilde U(t)\|_r\leq C\|W(t)\|_{r,\mathbb R^3}
\leq C(t-\bar t)^{-1/2+3/2r}\|W(\bar t)\|_{3,\infty,\mathbb R^3},
\end{equation*}
for $3<r\leq\infty$.
Similarly, we have
\[
\|\nabla\widetilde U(t)\|_{3,\infty}
\leq C\|\nabla W(t)\|_{3,\infty,\mathbb R^3}
+C\|W(t)\|_{\infty,\mathbb R^3}
\leq C(t-\bar t)^{-1/2}\|W(\bar t)\|_{3,\infty,\mathbb R^3}.
\]
These estimates together with 
$\|W(\bar t)\|_{3,\infty,\mathbb R^3}
=\|U(\bar t)\|_{3,\infty,\mathbb R^3}$
imply \eqref{auxi-bar}.
\end{proof}
\begin{remark}
Actually, $\widetilde U(t)$ does not possess any singular behavior
near $t=\bar t$, however, it is convenient to use \eqref{auxi-bar}
in the proof of Proposition \ref{mild}.
\label{conve-1}
\end{remark}

We will be faced with some troubles a few times
arising from
the behavior of $\widetilde U(t)$ such as
$\|\widetilde U(t)\|_\infty^2\leq Ct^{-1}$
near $t=0$, see \eqref{auxi-est-1}.
In order to get around this
unpleasant
situation, it is convenient to
carry out the following simple approximation procedure.
\begin{lemma}
Let $\varepsilon >0$.
Then there is a function
\[
\widetilde U_\varepsilon\in 
L^\infty(0,\infty; L^q(\Omega))
\]
with
\[
\nabla \widetilde U_\varepsilon\in 
L^\infty(0,\infty; L^q(\Omega))
\]
for every $q\in (3,\infty]$
such that
\begin{equation*}
\begin{split}
&\sup_{t>0}\|\widetilde U_\varepsilon(t)-\widetilde U(t)\|_{3,\infty}
\leq C\varepsilon,\\
&\sup_{t>0}t^{1/2-3/2q}\|\widetilde U_\varepsilon(t)-\widetilde U(t)\|_q
\leq C\varepsilon, \\
&\sup_{0<t\leq 1}t^{1-3/2q}
\|\nabla\widetilde U_\varepsilon(t)-\nabla\widetilde U(t)\|_q
\leq C\varepsilon,
\end{split}
\end{equation*}
for every $q\in (3,\infty)$.
\label{auxi-appro}
\end{lemma}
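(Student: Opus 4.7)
The plan is to keep the forcing-driven piece $V(t)$ of the auxiliary function unchanged and replace only the rough semigroup piece coming from the initial datum by a smooth approximation. Concretely, using the density of $C_0^\infty(\Omega)$ in $L^{3,\infty}_0(\Omega)$, I would pick $v_{0\varepsilon}\in C_0^\infty(\Omega)$ with $\|v_0-v_{0\varepsilon}\|_{3,\infty}\le\varepsilon$ (exactly as in \eqref{appro}), extend it by zero to $\mathbb R^3$, and define
\[
W_\varepsilon(y,t):=e^{t\Delta}v_{0\varepsilon}(y)+V(y,t),
\]
then obtain $U_\varepsilon$ by the same translation as in \eqref{os-sol}, and set
\[
\widetilde U_\varepsilon(x,t):=(1-\phi(x))\,U_\varepsilon(x,t)+\mathbb B\big[U_\varepsilon(\cdot,t)\cdot\nabla\phi\big](x).
\]
The decisive structural point is that $\widetilde U-\widetilde U_\varepsilon$ depends linearly on $U-U_\varepsilon$, which after undoing the translation reduces to $e^{t\Delta}(\bar v_0-v_{0\varepsilon})$.

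The three smallness estimates will then follow from standard heat-semigroup bounds applied to the single function $\bar v_0-v_{0\varepsilon}$: uniform boundedness of $e^{t\Delta}$ on $L^{3,\infty}(\mathbb R^3)$ yields the first; the Lorentz-type inequality $\|e^{t\Delta}f\|_{q,\mathbb R^3}\le Ct^{-1/2+3/(2q)}\|f\|_{3,\infty,\mathbb R^3}$ (obtained by real interpolation from $L^p\!\to\! L^q$ heat estimates) gives the second; and differentiating the Gauss kernel yields $\|\nabla e^{t\Delta}f\|_{q,\mathbb R^3}\le Ct^{-1+3/(2q)}\|f\|_{3,\infty,\mathbb R^3}$, giving the third. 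The cut-off and Bogovskii correction transfer these $\mathbb R^3$-bounds to $\Omega$ via \eqref{bog-est-1} and \eqref{bog-est-L}, and the $t$-powers match because the Bogovskii term is a bounded operator on the support of $\nabla\phi$ which lies in the bounded annulus $A_R$.

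For the uniform boundedness statement, smoothness and compact support of $v_{0\varepsilon}$ make $e^{t\Delta}v_{0\varepsilon}$ and $\nabla e^{t\Delta}v_{0\varepsilon}$ uniformly bounded on $(0,\infty)$ in every $L^q(\mathbb R^3)$ with $q\in[1,\infty]$. For $V$ and $\nabla V$, estimates \eqref{V-short1}--\eqref{V-large}, combined with the fact that $G(t)\equiv 0$ on $[T_0,\infty)$ by \eqref{h-2} or \eqref{h-3}, yield uniform $L^q$-boundedness for $q\in(3,\infty)$. I would then pass to $\widetilde U_\varepsilon$ through the cut-off and the mapping properties of $\mathbb B$ on the bounded annulus $A_R$.

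The only genuinely delicate step, and the one that motivates introducing the approximation, is bounding $\nabla V$ in $L^\infty$ near $t=0$, since the case $(j,r)=(1,\infty)$ is explicitly excluded from \eqref{V-short1}. I would handle it by fixing any $p\in(3,\infty)$, writing
\[
\nabla V(t)=\int_0^{\min(t,T_0)}\nabla e^{(t-\tau)\Delta}\,\mathbb P_{\mathbb R^3}G(\tau)\,d\tau,
\]
and using $\|\nabla e^{s\Delta}f\|_{\infty,\mathbb R^3}\le Cs^{-1/2-3/(2p)}\|f\|_{p,\mathbb R^3}$ together with \eqref{G-est}, which produces an integrable singularity with exponent $1/2+3/(2p)<1$, uniformly bounded for $t\ge 0$. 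Combined with the corresponding smooth bound on $e^{t\Delta}v_{0\varepsilon}$, this finishes the claim on $\nabla\widetilde U_\varepsilon\in L^\infty(0,\infty;L^\infty(\Omega))$.
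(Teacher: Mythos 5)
Your proposal is correct and follows essentially the same route as the paper: replace $\bar v_0$ by $v_{0\varepsilon}$ in \eqref{heat}, observe that the difference reduces linearly to $e^{t\Delta}(\bar v_0-v_{0\varepsilon})$, and obtain the uniform bounds on $V$, $\nabla V$ from \eqref{V-large}--\eqref{V-short1}, handling the excluded case $(j,r)=(1,\infty)$ exactly as the authors do, via \eqref{G-est} with an exponent $q\in(3,\infty)$ giving an integrable singularity.
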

\begin{proof}
We use $v_{0\varepsilon}$ in \eqref{appro}.
We replace $\bar v_0$ by $v_{0\varepsilon}$
in \eqref{heat} to define $W_\varepsilon$,
which leads to $\widetilde U_\varepsilon$ by \eqref{auxiliary} 
via \eqref{os-sol}.
Then we have
\begin{equation*}
\begin{split}
&\|\widetilde U_\varepsilon(t)-\widetilde U(t)\|_q
\leq C\|W_\varepsilon(t)-W(t)\|_{q,\mathbb R^3} 
=C\|e^{t\Delta}(v_{0\varepsilon}-\bar v_0)\|_{q,\mathbb R^3}, \\
&\|\widetilde U_\varepsilon(t)-\widetilde U(t)\|_{3,\infty}
\leq C\|W_\varepsilon(t)-W(t)\|_{3,\infty,\mathbb R^3}
=C\|e^{t\Delta}(v_{0\varepsilon}-\bar v_0)\|_{3,\infty,\mathbb R^3},
\end{split}
\end{equation*}
and
\begin{equation*}
\begin{split}
\|\nabla\widetilde U_\varepsilon(t)-\nabla\widetilde U(t)\|_q
&\leq C\|\nabla W_\varepsilon(t)-\nabla W(t)\|_{q,\mathbb R^3}
+C\|W_\varepsilon(t)-W(t)\|_{q,A_R} \\
&=C\|\nabla e^{t\Delta}(v_{0\varepsilon}-\bar v_0)\|_{q,\mathbb R^3}
+C\|e^{t\Delta}(v_{0\varepsilon}-\bar v_0)\|_{q,\mathbb R^3},
\end{split}
\end{equation*}
as well as
\begin{equation*}
\begin{split}
\|\widetilde U_\varepsilon(t)\|_q
&\leq C\|W_\varepsilon(t)\|_{q,\mathbb R^3}
\leq C\|e^{t\Delta}v_{0\varepsilon}\|_{q,\mathbb R^3}
+C\|V(t)\|_{q,\mathbb R^3}, \\
\|\nabla\widetilde U_\varepsilon(t)\|_q
&\leq C\|\nabla W_\varepsilon(t)\|_{q,\mathbb R^3}
+C\|W_\varepsilon(t)\|_{q,A_R} \\
&\leq C\|e^{t\Delta}\nabla v_{0\varepsilon}\|_{q,\mathbb R^3}
+C\|e^{t\Delta}v_{0\varepsilon}\|_{q,\mathbb R^3}
+C\|V(t)\|_{W^{1,q}(\mathbb R^3)},
\end{split}
\end{equation*}
for every $q\in (3,\infty]$.
Concerning $\|\nabla ^jV(t)\|_{q,\mathbb R^3}$ for $j=0,1$,
we have \eqref{V-large} and \eqref{V-short1} except for the case
$(j,q)=(1,\infty)$, in which $\|\nabla V(t)\|_{\infty,\mathbb R^3}$
can be estimated similarly by use of
\eqref{G-est} with $q\in (3,\infty)$.
The proof is thus complete.
\end{proof}
\begin{remark}
Both $\widetilde U_\varepsilon$ and $\nabla \widetilde U_\varepsilon$
belong to 
$L^\infty(0,\infty; L^q(\Omega))$ for every $q>2$
since we have \eqref{G-est} for such $q$, however,
for later use, the only cases we need are $q=\infty$ and $q=6$.
\label{rem-aapro}
\end{remark}

We next deduce some estimates and regularity of the function $F$.
\begin{lemma}
The function $F$ given by \eqref{auxi-force} satisfies
\begin{equation}
\|F(t)\|_2\leq
C(\|v_0\|_{3,\infty}+M_3)\,t^{-1/2},
\label{F-est-1}
\end{equation}
\begin{equation}
\|F(t)\|_{H^{-1}(\Omega)}\leq 
C(\|v_0\|_{3,\infty}+M_3)(1+t)^{-1/2},
\label{F-est-2}
\end{equation}
\begin{equation}
|\langle F(t), \varphi\rangle|
\leq C(\|v_0\|_{3,\infty}+M_3)(1+t)^{-1/2}\|\nabla\varphi\|_2, \quad
\forall \varphi\in H_0^1(\Omega),
\label{F-est-3}
\end{equation}
for all $t>0$, where $M_3$ is as in 
\eqref{M}, and thereby
\begin{equation}
F\in L^2(0,T; H^{-1}(\Omega))
\label{F}
\end{equation}
for every $T\in (0,\infty)$.
Furthermore,
\begin{equation}
F\in C^\mu_{loc}((0,\infty); L^2(\Omega))
\label{F-hoelder}
\end{equation}
for every $\mu\in (0,1/2)$ with $\mu\leq\theta$,
where $\theta$ is as in \eqref{h-1}.

Let $q\in (1,3)$ and $\bar t\in [T_0,\infty)$, where $T_0$ is as in
\eqref{h-2} or \eqref{h-3}.
Then
\begin{equation}
\|F(t)\|_q
\leq C(t-\bar t)^{-1/2}\|U(\bar t)\|_{3,\infty,\mathbb R^3}
\label{F-bar}
\end{equation}
for all $t>\bar t$.
\label{F-class}
\end{lemma}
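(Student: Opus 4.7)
The plan is to expand the definition \eqref{auxi-force} of $F$ using $E = -\phi U + \mathbb{B}[U \cdot \nabla \phi]$, and then to reduce the resulting expression, via the Oseen equation \eqref{oseen} and the Bogovskii commutator estimate \eqref{bog-est-3}, to a sum of quantities already controlled by Lemma \ref{auxi-lem}. Expanding $-\partial_t E + \Delta E - hu_\infty \cdot \nabla E$ produces a term $\phi(\partial_t U - \Delta U + h u_\infty \cdot \nabla U)$, the combination $-\mathbb{B}[\partial_t U \cdot \nabla \phi] + \Delta \mathbb{B}[U \cdot \nabla \phi]$, and several lower-order pieces involving $\nabla \phi$ or $\Delta \phi$ times $U$, $\nabla U$ or $\mathbb{B}[U \cdot \nabla \phi]$. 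Since $W$ solves $\partial_t W = \Delta W + \mathbb{P}_{\mathbb{R}^3} G$ on $\mathbb{R}^3$, the equation for $U$ admits the pressure-free form $\partial_t U - \Delta U + h u_\infty \cdot \nabla U = H$, where $H(x,t) := (\mathbb{P}_{\mathbb{R}^3} G)(x - u_\infty \int_0^t h(s)\,ds, t)$ inherits the $L^q$-bound \eqref{G-est} and vanishes for $t \geq T_0$. Substituting this collapses the $\phi$-bracket to $\phi H$; substituting the same equation into $\mathbb{B}[\partial_t U \cdot \nabla \phi]$ produces $\mathbb{B}[\Delta U \cdot \nabla \phi]$ plus lower-order terms, so that $\Delta \mathbb{B}[U \cdot \nabla \phi] - \mathbb{B}[\Delta U \cdot \nabla \phi]$ is precisely the commutator controlled in $L^q$ by $C\|U\|_q$ via \eqref{bog-est-3}. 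I therefore expect to arrive at a decomposition
\[
F = \phi_0 g + \phi H - \mathbb{B}[H \cdot \nabla \phi] + R,
\]
where $R$ is a finite sum of terms of the form $a(x)\nabla U + b(x) U$ or $\mathbb{B}[c(x) \cdot \nabla U + d(x) U]$ with smooth coefficients compactly supported in $A_R$.

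With this decomposition in hand, \eqref{F-est-1} is immediate: the first three pieces vanish for $t \geq T_0$ by \eqref{h-2} or \eqref{h-3}, and on $[0, T_0]$ are bounded uniformly by $CM_3$ through \eqref{G-est} and \eqref{steady}, hence majorized by $CM_3 T_0^{1/2}\,t^{-1/2}$ there; meanwhile each piece of $R$ is controlled via \eqref{bog-est-1} and Poincar\'e on $A_R$ by $\|\nabla U(t)\|_{2, A_R}$ and $\|U(t)\|_{2, A_R}$, and Lemma \ref{auxi-lem} combined with $L^{3, \infty}(A_R) \hookrightarrow L^2(A_R)$ gives $\|\nabla U(t)\|_{2, A_R} \leq C(\|v_0\|_{3, \infty} + M_3)\,t^{-1/2}$ and $\|U(t)\|_{2, A_R} \leq C(\|v_0\|_{3, \infty} + M_3)$. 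For \eqref{F-est-2}--\eqref{F-est-3} and hence \eqref{F}, pair $F$ with an arbitrary $\varphi \in H^1_0(\Omega)$ and integrate by parts on every $\nabla U$-factor in $R$ (using that the coefficients vanish near $\partial A_R$) to transfer the gradient onto $\varphi$; the resulting bound involves only $\|U(t)\|_{2, A_R}$, which is uniformly bounded for small $t$ by the same embedding and decays like $t^{-1/2}$ for large $t$ by \eqref{U-est}, altogether bounded by $C(1+t)^{-1/2}$, and combining with the Poincar\'e--Sobolev inequality $\|\varphi\|_{2, A_R} \leq C\|\nabla \varphi\|_2$ produces the desired estimate.

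The H\"older continuity \eqref{F-hoelder} follows because each $R$-piece inherits $C^\mu$-regularity in time from \eqref{U-hoelder}, the sources $\phi_0 g$, $\phi H$, $\mathbb{B}[H \cdot \nabla \phi]$ are $C^\theta$ in time through \eqref{h-1}, and the Bogovskii operator preserves H\"older continuity. For the sharp estimate \eqref{F-bar}, on $[\bar t, \infty)$ we have $g \equiv 0$ hence $H \equiv 0$ and the first three pieces of $F$ vanish identically; moreover $W(t) = e^{(t-\bar t)\Delta} W(\bar t)$ by \eqref{heat}, so the heat-semigroup bounds yield $\|\nabla^j U(t)\|_{q, A_R} \leq C(t-\bar t)^{-1/2 + 3/(2q) - j/2} \|U(\bar t)\|_{3, \infty, \mathbb{R}^3}$ for $j = 0, 1$ and any $q \in (1, 3)$; the dominant $\nabla U$-contribution in $R$ then gives \eqref{F-bar}.

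The main obstacle is the very first step: both $\partial_t U$ and $\Delta U$ appear in the raw expansion of $F$, yet neither is directly controlled in the norms we need. The whole strategy hinges on trading $\partial_t U$ for $\Delta U$ twice via the Oseen equation in order to create the combination $\Delta \mathbb{B}[U \cdot \nabla \phi] - \mathbb{B}[\Delta U \cdot \nabla \phi]$ that the commutator identity \eqref{bog-est-3} controls by $\|U\|_q$, after which only the bounds on $U$ and $\nabla U$ supplied by Lemma \ref{auxi-lem} remain.
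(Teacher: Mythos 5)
Your overall strategy coincides with the paper's: use the (pressure-absorbed) Oseen equation to eliminate $\partial_t U$, isolate the Bogovskii--Laplacian commutator so that \eqref{bog-est-3} applies, and reduce everything to $\|U\|_{2,A_R}$ and $\|\nabla U\|_{2,A_R}$, which Lemma \ref{auxi-lem} controls; your $\phi_0 g+\phi H-\mathbb B[H\cdot\nabla\phi]$ is exactly the paper's $F_1=\phi(g-\nabla P)-\mathbb B[(g-\nabla P)\cdot\nabla\phi]$, and your remainder $R$ collects its $F_2,\dots,F_5$. There is, however, one step that does not work as you describe it. For \eqref{F-est-2}--\eqref{F-est-3} you propose to ``integrate by parts on every $\nabla U$-factor in $R$ to transfer the gradient onto $\varphi$.'' That is fine for the bare term $-2\nabla\phi\cdot\nabla U$, but $R$ also contains terms of the form $\mathbb B[c\cdot\nabla U]$ (namely $h\mathbb B[(u_\infty\cdot\nabla U)\cdot\nabla\phi]$ and $\mathbb B[2\nabla U\cdot\nabla(\nabla\phi)+\cdots]$), and you cannot move a derivative past $\mathbb B$ onto the test function. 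The only tool you invoke for these, \eqref{bog-est-1} plus Poincar\'e, yields $\|\mathbb B[c\cdot\nabla U]\|_2\le C\|\nabla U\|_{2,A_R}\le Ct^{-1/2}$, which is \emph{not} $O((1+t)^{-1/2})$ near $t=0$ and is not square-integrable there, so \eqref{F-est-2}, \eqref{F-est-3} and hence \eqref{F} would not follow. The missing ingredient is the negative-order Bogovskii estimate \eqref{bog-est-2}: writing $c\cdot\nabla U=\operatorname{div}(c\,U)-(\operatorname{div}c)\,U$ one gets $\|\mathbb B[c\cdot\nabla U]\|_2\le C\|c\cdot\nabla U\|_{H^1(A_R)^*}\le C\|U\|_{2,A_R}$, which is exactly how the paper handles its $F_4$ and $F_{52}$ and which restores the $(1+t)^{-1/2}$ bound.

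Two smaller points. For \eqref{F-est-1} you bound $\|U(t)\|_{2,A_R}$ only by a constant via $L^{3,\infty}(A_R)\hookrightarrow L^2(A_R)$; that does not give $t^{-1/2}$ for large $t$ --- you need $\|U(t)\|_{2,A_R}\le C\|U(t)\|_{\infty,\mathbb R^3}\le Ct^{-1/2}$ from \eqref{U-est}, which you do state correctly two sentences later, so this is only an inconsistency of exposition. In \eqref{F-bar}, the claimed estimate $\|\nabla U(t)\|_{q,A_R}\le C(t-\bar t)^{3/(2q)-1}\|U(\bar t)\|_{3,\infty,\mathbb R^3}$ is not a valid whole-space heat-semigroup estimate for $q<3$, and even taken at face value its exponent does not produce the required $(t-\bar t)^{-1/2}$; the correct route is $\|\nabla U(t)\|_{q,A_R}\le C\|\nabla U(t)\|_{3,\infty,\mathbb R^3}\le C(t-\bar t)^{-1/2}\|U(\bar t)\|_{3,\infty,\mathbb R^3}$, exactly as in the proof of \eqref{auxi-bar}. (Also, the label you cite for the H\"older continuity of $U$ does not exist; the relevant facts are the third lines of \eqref{U-reg} and \eqref{W-hoelder}.)
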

\begin{proof}
Using the equation \eqref{oseen}, we split $F$ into
\[
F(x,t)=F_1+F_2+F_3+F_4+F_5
\]
with
\begin{equation*}
\begin{split}
&F_1=\phi(g-\nabla P)-\mathbb B[(g-\nabla P)\cdot\nabla\phi], \\
&F_2=-2\nabla\phi\cdot\nabla U, \\
&F_3=-(\Delta\phi)U+h(u_\infty\cdot\nabla\phi)U
-hu_\infty\cdot\nabla\mathbb B[U\cdot\nabla\phi], \\
&F_4=h\mathbb B[(u_\infty\cdot\nabla U)\cdot\nabla\phi], \\
&F_5=-\mathbb B[\Delta U\cdot\nabla\phi]+\Delta\mathbb B[U\cdot\nabla\phi].
\end{split}
\end{equation*}
Here, we have used $\phi_0g+\phi\bar g=\phi g$.
It is easily seen from \eqref{G-est} that
\[
\|F_1\|_2
\leq C\|g(t)\|_3+C\|\nabla Q(t)\|_{3,\mathbb R^3}\leq CM_3.
\]
Note that
\[
F_1=0 \qquad (t\geq T_0)
\]
by \eqref{h-2} or \eqref{h-3}.
We also have
\begin{equation*}
\begin{split}
&\|F_2\|_2\leq C\|\nabla U(t)\|_{2,A_R}\leq C\|\nabla U(t)\|_{3,\infty,\mathbb R^3}, \\
&\|F_2\|_{H^{-1}(\Omega)}+\|F_3\|_2
\leq C\|U(t)\|_{2,A_R}.
\end{split}
\end{equation*}
Thanks to \eqref{bog-est-2}, we obtain
\[
\|F_4\|_2\leq C\|(u_\infty\cdot\nabla U)\cdot\nabla\phi\|_{H^1(A_R)^*}
\leq C\|U(t)\|_{2,A_R}.
\]
The last term is further modified as
\[
F_5=F_{51}+F_{52},
\]
where
\begin{equation*}
\begin{split}
&F_{51}=
-\mathbb B[\Delta(U\cdot\nabla\phi)]+\Delta\mathbb B[U\cdot\nabla\phi], \\
&F_{52}=
\mathbb B[2\nabla U\cdot\nabla(\nabla\phi)+U\cdot\nabla(\Delta\phi)].
\end{split}
\end{equation*}
From \eqref{bog-est-2} as well as \eqref{bog-est-1} we observe
\[
\|F_{52}\|_2\leq C\|U(t)\|_{2,A_R}.
\]
By virtue of \eqref{bog-est-3} we find
\[
\|F_{51}\|_2
\leq C\|U(t)\|_{2,A_R}.
\]
All the computation above tells us that
\[
|\langle F(t),\varphi\rangle|
\leq \|F_1+F_3+F_4+F_5\|_2\|\varphi\|_{2,\Omega_{3R}}
+C\|U(t)\|_{2,A_R}\|\varphi\|_{H^1(\Omega_{3R})},
\]
the latter of which comes from $F_2$,
where $\Omega_{3R}=\Omega\cap B_{3R}$.
Since
$\|\varphi\|_{2,\Omega_{3R}}\leq C\|\nabla\varphi\|_2$
for $\varphi\in H_0^1(\Omega)$, we get
\[
|\langle F(t), \varphi\rangle|
\leq C\|U(t)\|_{2,A_R}\|\nabla\varphi\|_2.
\]
Using
\[
\|U(t)\|_{2,A_R}\leq 
\left\{
\begin{array}{l}
C\|U(t)\|_{3,\infty,\mathbb R^3}, \\
C\|U(t)\|_{\infty,\mathbb R^3},
\end{array}
\right.
\]
we conclude \eqref{F-est-1}--\eqref{F-est-3} from \eqref{U-est}.

Estimates above in $L^2(\Omega)$ imply that
\begin{equation*}
\begin{split}
\|F(t)-F(s)\|_2
&\leq C\|g(t)-g(s)\|_3
+C\|\nabla U(t)-\nabla U(s)\|_{2,A_R}  \\
& \qquad +C\|U(t)-U(s)\|_{2,A_R}
+C|h(t)-h(s)|,
\end{split}
\end{equation*}
which leads us to \eqref{F-hoelder} on account of
\eqref{h-1}, \eqref{force-1} and \eqref{U-reg}.

Finally, let $q\in (1,3)$, $\bar t\in [T_0,\infty)$ and $t>\bar t$.
Since estimates above in $L^2(\Omega)$ replaced by $L^q(\Omega)$ hold true,
we have
\begin{equation*}
\|F(t)\|_q\leq C\|\nabla U(t)\|_{q,A_R}+C\|U(t)\|_{q,A_R}
\leq C\|\nabla U(t)\|_{3,\infty,\mathbb R^3}+C\|U(t)\|_{\infty,\mathbb R^3}.
\end{equation*}
Then the same reasoning as in the proof of \eqref{auxi-bar}
yields \eqref{F-bar}.
\end{proof}

\section{Weak solution}\label{weak}

Let us take the auxiliary function $\widetilde U(x,t)$ given by \eqref{auxiliary}
and look for a solution to \eqref{NS-1} of the form \eqref{decompo}.
Then \eqref{NS-2} and \eqref{auxi-eq} imply that $w(x,t)$ should obey
\eqref{NS-3} with
\begin{equation}
\begin{split}
&f=F-\widetilde U\cdot\nabla\widetilde U
-h(u_s\cdot\nabla\widetilde U+\widetilde U\cdot\nabla u_s), \\
&w_0=\phi v_0-\mathbb B[v_0\cdot\nabla\phi]\in L^2_\sigma(\Omega),
\end{split}
\label{data}
\end{equation}
where $p_w=p_v-P$ is the pressure associated with $w$, while $F$ is given by 
\eqref{auxi-force}.
By \eqref{interpo-1}, \eqref{LH}, \eqref{LS}, \eqref{auxi-est-1}
and \eqref{auxi-est-2} we have
\begin{equation*}
\begin{split}
&\|\widetilde U\cdot\nabla\widetilde U\|_2
\leq\|\widetilde U\|_{6,2}\|\nabla \widetilde U\|_{3,\infty}
\leq C(\|v_0\|_{3,\infty}+M_3)^2\,t^{-3/4}, \\
&\|u_s\cdot\nabla\widetilde U\|_2
\leq C\|u_s\|_{6,2}(\|v_0\|_{3,\infty}+M_3)\,t^{-1/2}, \\
&\|\widetilde U\cdot\nabla u_s\|_2
\leq C(\|\nabla u_s\|_{6,2}+\|\nabla u_s\|_2)(\|v_0\|_{3,\infty}+M_3)(1+t)^{-1/2},
\end{split}
\end{equation*}
for all $t>0$.
These estimates together with \eqref{F-est-1} imply
\begin{equation}
\kappa_f:=\sup_{t>0}t^{3/4}(1+t)^{-1/4}\|f(t)\|_2 <\infty.
\label{f-est}
\end{equation}
By \eqref{auxi-est-1} and \eqref{auxi-decay} we know
\begin{equation}
\|\widetilde U\otimes \widetilde U+h(\widetilde U\otimes u_s+u_s\otimes \widetilde U)\|_2
\left\{
\begin{array}{ll}
\leq Ct^{-1/4} \quad &\mbox{for all $t>0$}, \\
=o(t^{-1/4}) &\mbox{as $t\to\infty$},
\end{array}
\right.
\label{tensor-est}
\end{equation}
which together with \eqref{F} yields
\begin{equation}
f\in L^2(0,T; H^{-1}(\Omega))
\label{f-class}
\end{equation}
for every $T\in (0,\infty)$.
Furthermore, by \eqref{h-1}, \eqref{auxi-reg} and \eqref{F-hoelder} we find
\begin{equation}
f\in C^\mu_{loc}((0,\infty); L^2(\Omega)),
\label{f-hoelder}
\end{equation}
for every $\mu\in (0,1/2)$ with $\mu\leq\theta$.

In this section we show the existence of weak solution with the strong energy 
inequality \eqref{SEI}.
Let us recall the definition of 
the Leray-Hopf weak solution (\cite{L2}, \cite{Ho}, \cite{Mas}).
\begin{definition}
We say that $w(x,t)$ is a weak solution to \eqref{NS-3} with \eqref{data} if
\[
w\in L^\infty(0,T; L^2_\sigma(\Omega))
\cap L^2(0,T; H^1_{0,\sigma}(\Omega))
\cap C_w([0,\infty); L^2_\sigma(\Omega))
\]
for all $T\in (0,\infty)$ together with
$\lim_{t\to 0}\|w(t)-w_0\|_2=0$
and $w$ satisfies \eqref{SEI} for $s=0$ as well as
\begin{equation}
\begin{split}
&\langle w(t),\varphi(t)\rangle
+\int_s^t
\Big[\langle\nabla w,\nabla\varphi\rangle
+\langle\{h(u_\infty+u_s)+\widetilde U\}\cdot\nabla w,\varphi\rangle \\
&\qquad -\langle (hu_s+\widetilde U)\otimes w, \nabla\varphi\rangle 
+\langle w\cdot\nabla w,\varphi\rangle\Big] d\tau \\
&=\langle w(s),\varphi(s)\rangle
+\int_s^t \Big[\langle w,\partial_\tau\varphi\rangle
+ \langle f,\varphi\rangle \Big] d\tau
\end{split}
\label{weak-form}
\end{equation}
for all $0\leq s<t<\infty$ and
$\varphi$, which is of class
\begin{equation}
\begin{split}
&\varphi\in C([0,\infty); L^2_\sigma(\Omega))
\cap L^\infty_{loc}([0,\infty); L^{3,\infty}(\Omega)), \\
&\nabla\varphi\in L^2_{loc}([0,\infty); L^2(\Omega)), \quad
\partial_t\varphi\in L^2_{loc}([0,\infty); L^2_\sigma(\Omega)).
\end{split}
\label{test}
\end{equation}
\label{Leray-Hopf}
\end{definition}

We will follow in principle the argument of Miyakawa and Sohr \cite{MS},
whose idea partially goes back to Leray \cite{L2}.
Set
\[
J_k=e^{-\frac{1}{k}A}, \qquad (k=1,2,...)
\]
and consider the approximate problem
\begin{equation}
\begin{split}
&\partial_tw+Aw+\mathbb P[Sw+(J_kw)\cdot\nabla w]=\mathbb Pf, \\
&w(0)=w_0,
\end{split}
\label{NS-appro}
\end{equation}
where
\[
Sw=\{h(u_\infty+u_s)+\widetilde U\}\cdot\nabla w+w\cdot\nabla (hu_s+\widetilde U).
\]
The following lemma provides a solution with the a priori estimate.
\begin{lemma}
For each $k=1,2,...$, problem \eqref{NS-appro} admits a unique global strong
solution $w=w_k$ of class
\[
w_k\in C([0,\infty); L^2_\sigma(\Omega))
\cap C((0,\infty); D_2(A))\cap C^1((0,\infty); L^2_\sigma(\Omega))
\]
subject to
$\lim_{t\to 0}\|w_k(t)-w_0\|_2=0$, which satisfies
\begin{equation}
\|w_k(t)\|_2^2+\int_0^t\|\nabla w_k\|_2^2d\tau\leq Y(t)
\label{appro-est}
\end{equation}
for all $t>0$ with
\begin{equation}
\begin{split}
&Y(t):=\left(\|w_0\|_2^2+C\|f\|_{L^2(0,t; H^{-1}(\Omega))}^2\right)e^{CNt}, \\
&N:=1+|h|_\infty^2\|u_s\|_\infty^2
+\|\widetilde U_{\varepsilon_0}\|_{L^\infty(0,\infty; L^\infty(\Omega))}^2,
\end{split}
\label{Y}
\end{equation}
where $\widetilde U_{\varepsilon_0}$ is the function given by
Lemma \ref{auxi-appro} for some $\varepsilon_0 >0$.
\label{appro-sol}
\end{lemma}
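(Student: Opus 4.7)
I would follow the classical Leray--Miyakawa--Sohr scheme: local existence by Picard iteration on the mild equation, exploiting the smoothing effect of the mollifier $J_k$; a $k$-uniform energy estimate preventing blow-up; and an analytic-semigroup bootstrap yielding strong regularity. Throughout, the technical point is to handle the singular behaviour of $\widetilde U(t)$ near $t=0$ by splitting $\widetilde U=\widetilde U_{\varepsilon_0}+(\widetilde U-\widetilde U_{\varepsilon_0})$ via Lemma \ref{auxi-appro}.

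\textbf{Local existence.} Recast \eqref{NS-appro} as
\[
w(t)=e^{-tA}w_0+\int_0^t e^{-(t-s)A}\mathbb P\bigl[f-Sw-(J_kw)\cdot\nabla w\bigr](s)\,ds
\]
and seek a fixed point in a closed ball of $C([0,T];L^2_\sigma)$ for small $T$. Since $J_k=e^{-A/k}$ smooths infinitely on $L^2_\sigma$ by analyticity, one has $\|J_kw\|_\infty+\|\nabla J_kw\|_\infty\le C_k\|w\|_2$; writing the convective term in divergence form $\mathbb P\,\mathrm{div}(J_kw\otimes w)$ and combining with the dual estimate $\|e^{-tA}\mathbb P\,\mathrm{div}\|_{L^2\to L^2}\le Ct^{-1/2}$ (available because $A$ is selfadjoint on $L^2_\sigma$ and $\|A^{1/2}e^{-tA}\|\le Ct^{-1/2}$) yields a Lipschitz contraction on $[0,T]$ once $T$ is small. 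The linear term $Sw=\mathrm{div}\bigl\{[h(u_\infty+u_s)+\widetilde U]\otimes w+w\otimes(hu_s+\widetilde U)\bigr\}$ is likewise in divergence form; the bound $\widetilde U\in L^\infty_t L^{3,\infty}_x$ from \eqref{auxi-est-2} together with the Lorentz-Sobolev embedding $H^1_0\hookrightarrow L^{6,2}$ absorbs its contribution.

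\textbf{A priori energy estimate.} Taking the $L^2$ inner product of \eqref{NS-appro} with $w_k$ uses the two cancellations
\[
\langle (J_kw_k)\cdot\nabla w_k,w_k\rangle=0,\qquad
\langle [h(u_\infty+u_s)+\widetilde U]\cdot\nabla w_k,w_k\rangle=0,
\]
the first since $J_kw_k\in L^2_\sigma$, the second since each coefficient vector field is solenoidal and $w_k$ vanishes on $\partial\Omega$. Integration by parts in what remains of $\langle Sw_k,w_k\rangle$ gives $-\langle(hu_s+\widetilde U)\otimes w_k,\nabla w_k\rangle$. Splitting $\widetilde U=\widetilde U_{\varepsilon_0}+(\widetilde U-\widetilde U_{\varepsilon_0})$ according to Lemma \ref{auxi-appro}, the inequalities \eqref{LH} and \eqref{LS} yield
\[
\bigl|\langle(\widetilde U-\widetilde U_{\varepsilon_0})\otimes w_k,\nabla w_k\rangle\bigr|\le C\varepsilon_0\|w_k\|_{6,2}\|\nabla w_k\|_2\le\tfrac14\|\nabla w_k\|_2^2
\]
for $\varepsilon_0$ small enough, which is absorbed into the viscous term. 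The remaining pieces involving $(hu_s+\widetilde U_{\varepsilon_0})\otimes w_k$ are bounded in $L^\infty\cdot L^2$, and the forcing by $\|f\|_{H^{-1}}\|\nabla w_k\|_2$. Young's inequality then produces
\[
\tfrac{d}{dt}\|w_k\|_2^2+\|\nabla w_k\|_2^2\le CN\|w_k\|_2^2+C\|f\|_{H^{-1}(\Omega)}^2,
\]
and Gronwall's lemma delivers \eqref{appro-est} with $Y(t)$ of the precise form \eqref{Y}.

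\textbf{Globalization, strong regularity, uniqueness.} Because the a priori bound is independent of the length of the local interval, continuation produces a solution on $[0,\infty)$. The claimed strong regularity $w_k\in C((0,\infty);D_2(A))\cap C^1((0,\infty);L^2_\sigma)$ follows from standard analytic-semigroup theory once the right-hand side $f-Sw_k-\mathbb P[(J_kw_k)\cdot\nabla w_k]$ is shown to be locally Hölder continuous in $t$ as an $L^2_\sigma$-valued map; this uses \eqref{f-hoelder}, \eqref{auxi-reg}, and the fact that $J_k$ turns $C([0,T];L^2_\sigma)$ into $C([0,T];D(A^m))$ for every $m$. Uniqueness is a Gronwall argument on the difference of two solutions, made available by the Lipschitz structure imposed by the mollifier. \textbf{The main obstacle} is precisely the singular behaviour of $\|\widetilde U(t)\|_\infty$ as $t\to 0^+$: it rules out any uniform-in-$t$ $L^\infty$ coefficient in $S$, and the approximation $\widetilde U_{\varepsilon_0}$ of Lemma \ref{auxi-appro} is designed exactly to isolate a uniformly $L^\infty$-bounded piece (producing the constant $N$) while leaving a small $L^{3,\infty}$-remainder that is absorbed by the Dirichlet integral through Lorentz-Sobolev interpolation.
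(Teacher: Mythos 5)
Your overall scheme --- mollified equation, fixed point for local existence, $k$-uniform energy estimate via the $\widetilde U_{\varepsilon_0}$-splitting of Lemma \ref{auxi-appro}, Gronwall, continuation, analytic-semigroup bootstrap --- is exactly the paper's, and your a priori estimate is essentially identical to the one derived from \eqref{EE}: the two cancellations, the absorption of $C\|\widetilde U-\widetilde U_{\varepsilon_0}\|_{3,\infty}\|\nabla w_k\|_2^2$ into the Dirichlet integral for small $\varepsilon_0$, and the resulting $Y(t)$ all match.

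The gap is in the local existence step. You declare the fixed-point space to be a ball of $C([0,T];L^2_\sigma)$, but to control the contribution of $\widetilde U$ you invoke the embedding $H^1_0\hookrightarrow L^{6,2}$, i.e.\ $\|w\|_{6,2}\le C\|\nabla w\|_2$; the gradient is not part of your norm, so the contraction estimate does not close as written. Nor can you bound $\widetilde U\otimes w$ in $L^2$ by $\|\widetilde U(t)\|_\infty\|w\|_2$, because $\|\widetilde U(t)\|_\infty\sim t^{-1/2}$ near $t=0$ by \eqref{auxi-est-1}, and the resulting convolution $\int_0^t(t-\tau)^{-1/2}\tau^{-1/2}\,d\tau$ is bounded but does \emph{not} tend to zero as $t\to 0$, so no contraction is gained by shrinking $T$ alone. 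The paper's resolution is to work in the Kato-type space $E_{T_*}$ with norm $\sup_{0<t\le T_*}\bigl(\|w(t)\|_2+t^{1/2}\|\nabla w(t)\|_2\bigr)$, estimate $Sw$ against $\|\nabla w\|_2$ and $\|w\|_2$ directly, and observe that the piece coming from $\widetilde U-\widetilde U_\varepsilon$ contributes a term $C_2^\prime\varepsilon\|w\|_{E_{T_*}}$ that is independent of $T_*$; the contraction is obtained by first fixing $\varepsilon$ small and only afterwards choosing $T_*$ small. Your phrase ``yields a Lipschitz contraction once $T$ is small'' glosses over precisely this point, which is the one genuinely nontrivial feature of the lemma. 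A secondary consequence: the bootstrap to $C((0,\infty);D_2(A))\cap C^1((0,\infty);L^2_\sigma)$ needs local H\"older continuity of $\nabla w_k$ with values in $L^2$, which the paper obtains for free from the regularity \eqref{Phi} built into $E_{T_*}$; in a $C([0,T];L^2_\sigma)$-only framework you would have to insert an additional regularization step before the semigroup argument applies.
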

\begin{proof}
We fix $T\in (1,\infty)$ arbitrarily, and let us construct a solution on $(0,T]$.
We first establish the local existence of solutions.
Let $T_*\in (0,1]$ and set
\begin{equation*}
\begin{split}
E_{T_*}=&\{w\in C((0,T_*]; H^1_{0,\sigma}(\Omega)); \\
&\quad \|w\|_{E_{T_*}}:=
\sup_{0<t\leq T_*}\left(\|w(t)\|_2+t^{1/2}\|\nabla w(t)\|_2\right)<\infty\}
\end{split}
\end{equation*}
which is a Banach space endowed with norm $\|\cdot\|_{E_{T_*}}$.
We set
\[
(\Phi w)(t)
=H(t)-\int_0^t e^{-(t-\tau)A}\mathbb P[Sw+(J_kw)\cdot\nabla w](\tau)d\tau,
\]
where
\[
H(t)=e^{-tA}w_0+\int_0^t e^{-(t-\tau)A}\mathbb Pf(\tau)d\tau,
\]
and intend to solve the integral equation 
$w=\Phi w$ in $E_{T_*}$
by using \eqref{d-1}--\eqref{d-2} (for the Stokes semigroup).
For $w\in E_{T_*}$, we easily find
\begin{equation}
\begin{split}
&\Phi w\in C^{\mu}_{loc}((0,T_*]; L^q_\sigma(\Omega)), \quad
\forall q\in [2,\infty),\, \forall \mu\in (0,\mu_0), \\
&\nabla\Phi w\in C^{\mu}_{loc}((0,T_*]; L^q(\Omega)), \quad
\forall q\in [2,6),\, \forall \mu\in (0,\mu_0-1/2),
\end{split}
\label{Phi}
\end{equation}
where $\mu_0=\frac{3}{2q}+\frac{1}{4}$.
By \eqref{f-est} we have $H\in E_{T_*}$ with
\begin{equation*}
\begin{split}
&\|H(t)-w_0\|_2\leq\|e^{-tA}w_0-w_0\|_2+C\kappa_f\, t^{1/4}(1+t)^{1/4}, \\
&\|H\|_{E_{T_*}}\leq C_0\left(\|w_0\|_2+\kappa_f\sqrt{T}\right).
\end{split}
\end{equation*}
Let $w\in E_{T_*}$, then we have
\begin{equation*}
\begin{split}
\|\nabla^j\int_0^t e^{-(t-\tau)A}\mathbb P[(J_kw)\cdot\nabla w](\tau)d\tau\|_2
&\leq C\int_0^t (t-\tau)^{-j/2}\|J_kw\|_\infty\|\nabla w\|_2d\tau \\
&\leq C_1k^{3/4}\sqrt{T_*}\,t^{-j/2}\|w\|_{E_{T_*}}^2
\end{split}
\end{equation*}
for $t\in (0,T_*]$ 
and $j=0,1$.
Let $\varepsilon >0$.
We fix $r\in (3,\infty)$ and employ $\widetilde U_\varepsilon$ in
Lemma \ref{auxi-appro} to find
\begin{equation*}
\begin{split}
&\quad \|\nabla^j\int_0^t e^{-(t-\tau)A}\mathbb PSw(\tau)d\tau\|_2 \\
&\leq C\int_0^t(t-\tau)^{-j/2}
\left(|h|_\infty\|u_\infty+u_s\|_\infty+\|\widetilde U_\varepsilon\|_\infty\right)
\|\nabla w\|_2d\tau \\
&\quad +C\int_0^t (t-\tau)^{-3/2r-j/2}
\|\widetilde U_\varepsilon-\widetilde U\|_r\|\nabla w\|_2 d\tau \\
&\quad +C\int_0^t (t-\tau)^{-j/2}
\left(|h|_\infty\|\nabla u_s\|_\infty+\|\nabla\widetilde U_\varepsilon\|_\infty\right)
\|w\|_2d\tau \\
&\quad +C\int_0^t (t-\tau)^{-3/2r-j/2}
\|\nabla\widetilde U_\varepsilon-\nabla\widetilde U\|_r\|w\|_2 d\tau \\
&\leq 
\big\{
C_2^{(\varepsilon)}(\sqrt{T_*}+T_*)+
C_2^\prime\varepsilon
\big\}\,t^{-j/2}
\|w\|_{E_{T_*}}
\end{split}
\end{equation*}
for $t\in (0,T_*]$ and $j=0,1$.
As a consequence, we obtain
\begin{equation*}
\begin{split}
\|\Phi w\|_{E_{T_*}}
&\leq C_0\left(\|w_0\|_2+\kappa_f\sqrt{T}\right)
+C_1k^{3/4}\sqrt{T_*}\|w\|_{E_{T_*}}^2  \\
&\qquad +
\big(2C_2^{(\varepsilon)}\sqrt{T_*}+C_2^\prime \varepsilon\big)
\|w\|_{E_{T_*}}
\end{split}
\end{equation*}
as well as
\[
\limsup_{t\to 0}\|(\Phi w)(t)-w_0\|_2\leq C\varepsilon\|w\|_{E_{T_*}}
\]
for $w\in E_{T_*}$.
The latter for arbitrary $\varepsilon >0$ yields
\begin{equation}
\lim_{t\to 0}\|(\Phi w)(t)-w_0\|_2=0.
\label{IC-appro}
\end{equation}
We next choose $\varepsilon =1/8C_2^\prime$ in the former, so that
$2C_2^{(\varepsilon)}\sqrt{T_*}+C_2^\prime \varepsilon \leq 1/4$ when
$T_*\leq (1/16C_2^{(\varepsilon)})^2$.
We set 
\[
E_{T_*,\rho}=\{w\in E_{T_*}; \|w\|_{E_{T_*}}\leq\rho\}
\]
with
\begin{equation}
\rho=2C_0\left(\|w_0\|_2+\kappa_f\sqrt{T}\right), \quad
T_*=\min\Big\{\left(4C_1k^{3/4}\rho\right)^{-2},\,
(16C_2^{(\varepsilon)})^{-2},\, 1\Big\}.
\label{length}
\end{equation}
Then $w\in E_{T_*,\rho}$ implies $\Phi w\in E_{T_*,\rho}$.
Furthermore, we find
\[
\|\Phi w_1-\Phi w_2\|_{E_{T_*}}\leq\frac{3}{4}\|w_1-w_2\|_{E_{T_*}}
\]
for $w_1, w_2\in E_{T_*,\rho}$.
We thus get a unique fixed point $w\in E_{T_*,\rho}$ of the map $\Phi$,
which fulfills the initial condition by \eqref{IC-appro}.
It also follows from \eqref{Phi} together with 
\eqref{h-1}, \eqref{auxi-reg} and \eqref{f-hoelder} that
the local solution $w(t)$ satisfies
\[
\mathbb P[f-Sw-(J_kw)\cdot\nabla w]\in C^\mu_{loc}((0,T_*]; L^2_\sigma(\Omega)), \quad
\forall\mu\in (0,1/2)\;\mbox{with $\mu\leq\theta$}.
\]
Therefore, $w(t)$ is a strong solution of class
\[
w\in
C([0,T_*]; L^2_\sigma(\Omega))
\cap C((0,T_*]; D_2(A))
\cap C^1((0,T_*]; L^2_\sigma(\Omega)).
\]

In view of \eqref{length}, it suffices to derive a priori estimate
of strong solutions in $L^2(\Omega)$ for continuation of 
the solution globally in time.
Let $\varepsilon >0$.
By \eqref{NS-appro} we have
\begin{equation}
\frac{1}{2}\frac{d}{dt}\|w(t)\|_2^2+\|\nabla w(t)\|_2^2
=\langle (hu_s+\widetilde U)\otimes w, \nabla w\rangle+\langle f,w\rangle.
\label{EE}
\end{equation}
We use Lemma \ref{auxi-appro} again to find that it is bounded from above by
\begin{equation*}
\begin{split}
&C\|f(t)\|_{H^{-1}(\Omega)}^2
+C\left(1+|h(t)|^2\|u_s\|_\infty^2+\|\widetilde U_\varepsilon(t)\|_\infty^2\right)
\|w(t)\|_2^2  \\
&+\frac{1}{4}\|\nabla w(t)\|_2^2
+C_3\|\widetilde U_\varepsilon(t)-\widetilde U(t)\|_{3,\infty}\|\nabla w(t)\|_2^2.
\end{split}
\end{equation*}
We choose $\varepsilon=\varepsilon_0$ 
such that $\sup_{t>0}\|\widetilde U_{\varepsilon_0}(t)-\widetilde U(t)\|_{3,\infty}\leq 1/4C_3$
to conclude \eqref{appro-est}.
\end{proof}

Let $T\in (0,\infty)$.
By \eqref{appro-est} one can find a subsequence of
$\{w_k\}$, which is denoted by itself,
as well as a function
\begin{equation}
w\in L^\infty(0,T; L^2_\sigma(\Omega))
\cap L^2(0,T; H^1_{0,\sigma}(\Omega))
\label{candi}
\end{equation}
so that
\begin{equation}
\begin{split}
&w_k\to w \quad\mbox{weakly-star in $L^\infty(0,T; L^2_\sigma(\Omega))$}, \\
&w_k\to w \quad\mbox{weakly in $L^2(0,T; H^1_{0,\sigma}(\Omega))$},
\end{split}
\label{conv-1}
\end{equation}
as $k\to\infty$.
Let us deduce further convergence of $\{w_k\}$
\begin{lemma}
Let $T\in (0,\infty)$, and let $w$ be the function obtained in \eqref{candi}.
There is a subsequence of $\{w_k\}$, which we denote by itself,
such that
\begin{equation}
\lim_{k\to\infty}\sup_{0\leq t\leq T}|\langle w_k(t)-w(t),\phi\rangle|=0, \qquad
\forall\phi\in L^2_\sigma(\Omega),
\label{conv-2}
\end{equation}
\begin{equation}
\lim_{k\to\infty}\|w_k-w\|_{L^2(0,T; L^2(\Omega_L))}=0, \qquad
\forall L\in [R,\infty),
\label{conv-3}
\end{equation}
\begin{equation}
\lim_{k\to\infty}\|J_kw_k-w\|_{L^2(0,T; L^2(\Omega_L))}=0, \qquad
\forall L\in [R,\infty),
\label{conv-4}
\end{equation}
where $\Omega_L=\Omega\cap B_L$
and
$R$ is as in \eqref{cov}.
Furthermore, we have
\begin{equation}
w\in C_w([0,T]; L^2_\sigma(\Omega)),
\label{weak-conti}
\end{equation}
\begin{equation}
\lim_{t\to 0}\|w(t)-w_0\|_2=0.
\label{ini}
\end{equation}
\label{str-conv}
\end{lemma}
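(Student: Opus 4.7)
\emph{Proof strategy.} The statement bundles the standard compactness conclusions needed to pass to the limit in the Leray--Miyakawa--Sohr scheme, and the approach follows that pattern: Aubin--Lions for \eqref{conv-3}, Arzel\`a--Ascoli for \eqref{conv-2} and \eqref{weak-conti}, a semigroup estimate for \eqref{conv-4}, and an energy argument for \eqref{ini}. The key preparatory step is to derive a uniform-in-$k$ bound on $\partial_t w_k$ in the dual of a local Sobolev space. Testing \eqref{NS-appro} against $\varphi \in C_{0,\sigma}^\infty(\Omega_L)$ and controlling the linear terms via the bounds on $u_s$, $\widetilde U_{\varepsilon_0}$ and $f$ from Corollary \ref{auxi-cor} and Lemma \ref{F-class}, the nontrivial contribution is the convective one, estimated by
\[
\bigl|\langle (J_k w_k)\cdot\nabla w_k,\varphi\rangle\bigr|
\leq \|J_k w_k\|_6\,\|w_k\|_3\,\|\nabla\varphi\|_2
\leq C\,\|\nabla w_k\|_2^{3/2}\,\|w_k\|_2^{1/2}\,\|\nabla\varphi\|_2,
\]
where the $k$-uniform bound $\|\nabla J_k w_k\|_2 = \|A^{1/2} e^{-A/k} w_k\|_2 \leq \|\nabla w_k\|_2$ (from commutation and contractivity of the Stokes semigroup on $L^2_\sigma$) together with Sobolev embedding is used. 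Combining with \eqref{appro-est} and \eqref{f-class} gives $\partial_t w_k$ uniformly bounded in $L^{4/3}(0,T; H^{-1}(\Omega_L))$ for each $L \geq R$.

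With this, the Aubin--Lions lemma applied through the chain $H^1_0(\Omega_L) \Subset L^2(\Omega_L) \hookrightarrow H^{-1}(\Omega_L)$ yields relative compactness of $\{w_k\}$ in $L^2(0,T; L^2(\Omega_L))$, and a diagonal extraction over $L \in \{R, R+1, \ldots\}$ produces a subsequence fulfilling \eqref{conv-3} for every $L \geq R$. For \eqref{conv-4}, we split $J_k w_k - w = (J_k - I) w_k + (w_k - w)$ and exploit
\[
\|(e^{-sA} - I)\phi\|_2 \leq s^{1/2}\,\|A^{1/2}\phi\|_2 = s^{1/2}\,\|\nabla\phi\|_2, \qquad \phi \in H_{0,\sigma}^1(\Omega),
\]
which together with \eqref{appro-est} gives $\int_0^T \|(J_k - I) w_k\|_2^2\,dt \leq Y(T)/k = O(1/k)$; combining with \eqref{conv-3} yields \eqref{conv-4}.

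For \eqref{conv-2}, fix $\phi \in L^2_\sigma(\Omega)$ and approximate it in $L^2$ by $\phi_\delta \in C_{0,\sigma}^\infty(\Omega_{L_\delta})$ for an appropriate $L_\delta$. The family $\{t \mapsto \langle w_k(t),\phi_\delta\rangle\}_k$ is uniformly bounded on $[0,T]$ by \eqref{appro-est} and equicontinuous thanks to the $L^{4/3}(0,T; H^{-1}(\Omega_{L_\delta}))$ bound on $\partial_t w_k$, so Arzel\`a--Ascoli plus a diagonal extraction over a countable dense family furnish a subsequence for which $\langle w_k(\cdot),\phi_\delta\rangle \to \ell_{\phi_\delta}(\cdot)$ uniformly in $t$; by \eqref{conv-1} the limit $\ell_{\phi_\delta}$ must equal $\langle w(\cdot),\phi_\delta\rangle$ almost everywhere, hence defines a continuous representative. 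A standard $\|\phi - \phi_\delta\|_2$ argument using $\sup_k \|w_k\|_{L^\infty_t L^2_x} < \infty$ extends the uniform convergence and continuity to every $\phi \in L^2_\sigma$, establishing both \eqref{conv-2} and \eqref{weak-conti}. Finally, \eqref{ini} follows by combining the weak convergence $w(t) \rightharpoonup w_0$ (a special case of \eqref{conv-2} since $w_k(0) = w_0$) with the norm bound $\limsup_{t\to 0}\|w(t)\|_2^2 \leq \lim_{t\to 0} Y(t) = \|w_0\|_2^2$ (from \eqref{appro-est} by weak lower semicontinuity); in a Hilbert space, weak convergence together with convergence of norms yields strong convergence. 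The main technical hurdle is the $k$-uniform control of the nonlinear term, where the commutation $A^{1/2} e^{-A/k} = e^{-A/k} A^{1/2}$ on $H_{0,\sigma}^1$ is essential so that the mollifier $J_k$ does not absorb positive powers of $k$.
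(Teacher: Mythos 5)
Your treatment of \eqref{conv-2}, \eqref{conv-4}, \eqref{weak-conti} and \eqref{ini} matches the paper's: equicontinuity of $t\mapsto\langle w_k(t),\phi\rangle$ plus Ascoli--Arzel\`a and a diagonal extraction for \eqref{conv-2} (the paper obtains the equicontinuity by directly estimating $|\langle w_k(t)-w_k(s),\phi\rangle|$ from the weak formulation, which is the integrated form of your $\partial_t w_k$ bound against a fixed solenoidal test function, so no pressure enters there); the bound $\int_0^T\|J_kw_k-w_k\|_2^2\,dt\leq Ck^{-1}\int_0^T\|\nabla w_k\|_2^2\,dt$ for \eqref{conv-4}; and lower/upper semicontinuity of the norm at $t=0$ for \eqref{ini}. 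Your estimate of the convective term, including the $k$-uniform bound $\|\nabla J_kw_k\|_2\leq\|\nabla w_k\|_2$ via commutation of $A^{1/2}$ with $e^{-A/k}$, is correct.

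The genuine divergence is \eqref{conv-3}, and there your argument has a gap as stated. You invoke Aubin--Lions through the chain $H^1_0(\Omega_L)\Subset L^2(\Omega_L)\hookrightarrow H^{-1}(\Omega_L)$, but (i) $w_k$ restricted to $\Omega_L$ does not belong to $H^1_0(\Omega_L)$, since it need not vanish on the artificial boundary $\partial B_L$, and, more seriously, (ii) testing \eqref{NS-appro} against $\varphi\in C^\infty_{0,\sigma}(\Omega_L)$ only controls $\partial_t w_k$ in the dual of the \emph{solenoidal} subspace of $H^1_0(\Omega_L)$, not in $H^{-1}(\Omega_L)$: the Leray projection $\mathbb P$ is nonlocal, and a genuine $H^{-1}(\Omega_L)$ bound would require controlling the local pressure. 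With only the solenoidal dual bound, the map $L^2(\Omega_L)\to Y$ in the Aubin--Lions triple fails to be injective (gradients are annihilated), so the Ehrling-type interpolation inequality underlying the lemma does not apply directly. This is fixable --- either by introducing the local pressure (as the paper itself does later, in the proof of Proposition \ref{SEI-hold}, via the splitting \eqref{appro-1}--\eqref{appro-3}) or by a cutoff corrected with the Bogovskii operator --- but it needs to be done. The paper avoids the issue entirely: it proves \eqref{conv-2} first and then deduces \eqref{conv-3} from it by the Friedrichs inequality, which bounds $\|\psi(w_k-w)\|_{2,\Omega_{2L}}^2$ by $\varepsilon\|\nabla[\psi(w_k-w)]\|_2^2$ plus a \emph{finite} sum of squared pairings $|\langle w_k-w,\mathbb P(\psi\phi_j)\rangle|^2$, each of which tends to zero uniformly by \eqref{conv-2}. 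No time-derivative estimate and no pressure are needed. I would recommend you either adopt that route or supply the missing local-pressure (or Bogovskii-corrected cutoff) argument before appealing to Aubin--Lions.
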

\begin{proof}
We first fix $\phi\in C_{0,\sigma}^\infty(\Omega)$.
By \eqref{appro-est} it is obvious that
$\langle w_k,\phi\rangle$ is uniformly bounded.
Let $0\leq s<t\leq T$, then we see 
from 
\eqref{LH}, \eqref{LS},
\eqref{auxi-est-2}, \eqref{f-est}, \eqref{NS-appro} and \eqref{appro-est} that
\begin{equation*}
\begin{split}
&|\langle w_k(t),\phi\rangle- \langle w_k(s),\phi\rangle| \\
&\leq\int_s^t\Big[\|\nabla w_k\|_2\|\nabla\phi\|_2
+|h|_\infty(|u_\infty|+\|u_s\|_\infty)\|\nabla w_k\|_2\|\phi\|_2  \\
&\quad +\|\widetilde U\|_{3,\infty}\|\nabla w_k\|_2\|\phi\|_{6,2}
+|h|_\infty\|\nabla u_s\|_\infty\|w_k\|_2\|\phi\|_2
+\|\nabla\widetilde U\|_{3,\infty}\|w_k\|_2\|\phi\|_{6,2} \\
&\quad +C\|w_k\|_2^{1/2}\|\nabla w_k\|_2^{3/2}\|\phi\|_6+\|f\|_2\|\phi\|_2\Big] d\tau \\
&\leq
CY(T)^{1/2}\Big\{(\|\nabla\phi\|_2+\|\phi\|_2)(t-s)^{1/2}
+\|\phi\|_2(t-s)+\|\nabla\phi\|_2(t^{1/2}-s^{1/2})\Big\} \\
&\quad +CY(T)\|\nabla\phi\|_2(t-s)^{1/4}
+C\|\phi\|_2(t^{1/4}-s^{1/4}).
\end{split}
\end{equation*}
This shows that $\langle w_k,\phi\rangle$ is equi-continuous on $[0,T]$.
By the Ascoli-Arzel\`a theorem, $\{\langle w_k,\phi\rangle\}$ contains a subsequence
(dependent of $\phi\in C_{0,\sigma}^\infty(\Omega)$) which is
uniformly convergent on $[0,T]$.
Since $L^2_\sigma(\Omega)$ is separable, the diagonal method concludes that
one can further take a subsequence of $\{w_k\}$
(independent of $\phi\in L^2_\sigma(\Omega)$),
which is denoted by
itself, such that \eqref{conv-2} holds true.
This immediately implies \eqref{weak-conti}, and thereby
$\|w_0\|_2^2\leq\liminf_{t\to 0}\|w(t)\|_2^2$.
On the other hand, $\|w(t)\|_2^2$ is bounded from above by
the RHS of \eqref{appro-est}, which implies that
$\limsup_{t\to 0}\|w(t)\|_2^2\leq \|w_0\|_2^2$.
We thus obtain \eqref{ini}.

Let $L\in [R,\infty)$, and fix a cut-off function $\psi\in C_0^\infty(B_{2L})$
satisfying $\psi=1$ on $B_L$.
We utilize the Friedrichs inequality (\cite[p.489]{CH})
to see that, for every $\varepsilon >0$,
there are finite number of elements
$\phi_1,\cdots,\phi_m\in L^2(\Omega_{2L})$ such that
\begin{equation*}
\begin{split}
&\quad \|w_k(t)-w(t)\|_{2,\Omega_L}^2 \\
&\leq \|\psi(w_k(t)-w(t))\|_{2,\Omega_{2L}}^2 \\
&\leq\varepsilon\|\nabla[\psi(w_k(t)-w(t))]\|_{2,\Omega_{2L}}^2
+\sum_{j=1}^m\left|\langle\psi(w_k(t)-w(t)), \phi_j\rangle\right|^2.
\end{split}
\end{equation*}
Using \eqref{appro-est}, we find
\begin{equation*}
\begin{split}
&\quad \int_0^T\|w_k(t)-w(t)\|_{2,\Omega_L}^2\,dt  \\
&\leq C(1+T)Y(T)\,\varepsilon
+\sum_{j=1}^m \int_0^T\left|\langle w_k(t)-w(t), \mathbb P(\psi\phi_j)\rangle\right|^2.
\end{split}
\end{equation*}
By virtue of \eqref{conv-2} with $\mathbb P(\psi\phi_j)\in L^2_\sigma(\Omega)$
we obtain
\[
\limsup_{k\to\infty}\int_0^T\|w_k(t)-w(t)\|_{2,\Omega_L}^2\,dt
\leq C_T\varepsilon,
\]
which yields \eqref{conv-3}.
Finally, by \eqref{appro-est} we have
\begin{equation*}
\begin{split}
\int_0^T\|J_kw_k(t)-w_k(t)\|_{2,\Omega_L}^2\,dt
&\leq\int_0^T\left(\int_0^{1/k}\|\frac{d}{d\tau}e^{-\tau A}w_k(t)\|_2\,d\tau\right)^2
dt \\
&\leq\frac{C}{k}\int_0^T\|\nabla w_k(t)\|_2^2\,dt.
\end{split}
\end{equation*}
This combined with \eqref{conv-3} completes the proof of \eqref{conv-4}.
\end{proof}

We are in a position to provide a weak solution.
\begin{proposition}
Problem \eqref{NS-3} with \eqref{data} admits at least one weak solution.
\label{weak-sol}
\end{proposition}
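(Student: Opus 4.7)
The plan is to obtain $w$ as a weak limit of the approximate solutions $w_k$ from Lemma \ref{appro-sol} using the convergences established in Lemma \ref{str-conv}. Let $w$ be the function in \eqref{candi}, which already lies in the correct functional class, satisfies \eqref{weak-conti}, and fulfills the initial condition by \eqref{ini}. What remains is to derive the weak formulation \eqref{weak-form} for every $\varphi$ of class \eqref{test} and the strong energy inequality \eqref{SEI} by passing to the limit in $k$.

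For the weak formulation, I pair \eqref{NS-appro} with a test function $\varphi$, integrate over $[s,t]\times\Omega$ and integrate by parts, obtaining the identity \eqref{weak-form} with $w$ replaced by $w_k$ and with nonlinear term $\langle (J_kw_k)\cdot\nabla w_k,\varphi\rangle$. The endpoint terms $\langle w_k(\tau),\varphi(\tau)\rangle$ converge by \eqref{conv-2}; the time-derivative term by weak-$*$ convergence in $L^\infty(0,T;L^2_\sigma(\Omega))$; and the forcing term is $k$-independent. The linear terms coupling $w_k$ or $\nabla w_k$ to $\widetilde U$, $u_s$, or $u_\infty$ pass via the weak convergences \eqref{conv-1} together with the integrability properties of $\widetilde U$ and $u_s$ from Corollary \ref{auxi-cor} and \eqref{steady}, since the $L^{3,\infty}\cap L^q$-norms of $\widetilde U(t)$ appearing in \eqref{auxi-est-1}--\eqref{auxi-est-2} are locally integrable in $t$, which is enough to pair with $w_k\in L^2(0,T;H^1_{0,\sigma}(\Omega))$.

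The main obstacle is the nonlinear term. Using $\mathrm{div}\,J_kw_k=0$, I rewrite $\langle(J_kw_k)\cdot\nabla w_k,\varphi\rangle=-\langle(J_kw_k)\otimes w_k,\nabla\varphi\rangle$ and split
\[
(J_kw_k)\otimes w_k-w\otimes w=(J_kw_k-w)\otimes w_k+w\otimes(w_k-w).
\]
For each $L\geq R$ I split the spatial integral at $\Omega=\Omega_L\cup(\Omega\setminus\Omega_L)$. On $\Omega_L$ the strong $L^2_{loc}$ convergences \eqref{conv-3}--\eqref{conv-4}, combined with the uniform bound \eqref{appro-est}, give convergence to $\langle w\otimes w,\nabla\varphi\rangle$ restricted to $\Omega_L$. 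On the complement I exploit $\varphi\in L^\infty_{loc}([0,\infty);L^{3,\infty}(\Omega))$ by applying Lorentz--Hölder \eqref{LH} and Lorentz--Sobolev \eqref{LS} to the product $w_k\otimes w_k$, so that the tail is controlled uniformly in $k$ by a quantity tending to $0$ as $L\to\infty$; a parallel bound controls the limit $w\otimes w$.

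Finally, pairing \eqref{NS-appro} with $w_k$ yields the identity \eqref{EE} for $w_k$ (the cubic term vanishes because $J_kw_k$ is solenoidal and $w_k|_{\partial\Omega}=0$, while $h(u_\infty+u_s)+\widetilde U$ is solenoidal so that $\langle Sw_k,w_k\rangle=-\langle(hu_s+\widetilde U)\otimes w_k,\nabla w_k\rangle$); integration over $[s,t]$ produces the energy equality for $w_k$. Passing to the limit with the lower semicontinuity of $\|\cdot\|_2$ and of $\int_s^t\|\nabla\cdot\|_2^2\,d\tau$ under the weak convergences \eqref{conv-1}, together with convergence of the cross term $\int_s^t\langle(hu_s+\widetilde U)\otimes w_k,\nabla w_k\rangle d\tau$ (by the same bounded/unbounded splitting, now using the decay of $u_s$ and $\widetilde U$ from Corollary \ref{auxi-cor}) and of $\int_s^t\langle f,w_k\rangle d\tau$ (by \eqref{conv-3} and $f\in L^2(0,T;H^{-1}(\Omega))$), yields \eqref{SEI} for a.e. $s>0$ and all $t\geq s$; the case $s=0$ follows from \eqref{ini}.
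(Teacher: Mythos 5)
Your overall strategy for the weak formulation coincides with the paper's: pass to the limit in the approximate equations using the convergences of Lemma \ref{str-conv}, splitting the nonlinear term into a bounded region (handled by \eqref{conv-3}--\eqref{conv-4}) and a tail. Two technical points in that part deserve attention, though. First, the paper does \emph{not} integrate the convective term by parts onto $\varphi$; it keeps $\langle (J_kw_k-w)\cdot\nabla w_k,(1-\chi_{B_L})\varphi\rangle$ and controls the tail by $\bigl(\int_0^T\|\varphi\|_{6,\mathbb R^3\setminus B_L}^4\,d\tau\bigr)^{1/4}$, which requires first reducing to test functions with $\varphi\in L^\infty_{loc}([0,\infty);L^\infty(\Omega))$ via $J_m\varphi$ (a reduction you omit). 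After your integration by parts the tail pairs $(J_kw_k)\otimes w_k$ — uniformly bounded only in spaces like $L^{4/3}(0,T;L^2)$ or $L^2(0,T;L^{3/2})$ — against $\nabla\varphi$, and the class \eqref{test} gives only $\nabla\varphi\in L^2_{loc}([0,\infty);L^2)$; no H\"older pairing closes, and $\|\varphi\|_{3,\infty,\Omega\setminus B_L}$ need not tend to zero since $C_0^\infty$ is not dense in $L^{3,\infty}$. Second, both in \eqref{weak-form} and in the cross term of \eqref{SEI} the paper must insert the regularized $\widetilde U_\varepsilon$ of Lemma \ref{auxi-appro}, because $\|\widetilde U(t)\|_\infty\sim t^{-1/2}$ near $t=0$ is not integrable against $\|w_k-w\|_2\|\nabla w_k\|_2$; invoking only Corollary \ref{auxi-cor}, as you do, does not resolve this.

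The genuine gap is your last claim, that the strong energy inequality for a.e.\ $s>0$ follows from the same limit passage ``with lower semicontinuity.'' In the energy identity for $w_k$ on $[s,t]$ the quantity $\tfrac12\|w_k(s)\|_2^2$ sits on the \emph{right-hand} side, so you need $\limsup_k\|w_k(s)\|_2^2\le\|w(s)\|_2^2$; weak convergence gives the opposite inequality, and Lemma \ref{str-conv} provides strong convergence at time $s$ only on bounded subdomains $\Omega_L$, so the tail of $\|w_k(s)\|_2$ is uncontrolled. This is exactly why the paper isolates the case $s=0$ (where $w_k(0)=w_0$ identically) in Proposition \ref{weak-sol} — which is all that Definition \ref{Leray-Hopf} requires — and devotes the separate Proposition \ref{SEI-hold} to $s>0$, proved by multiplying the equation by $\psi_L w_k$, passing $k\to\infty$ \emph{before} $L\to\infty$, and controlling the resulting pressure terms through the decomposition \eqref{appro-1}--\eqref{appro-3} and maximal regularity. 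None of that machinery is dispensable, and your proposal contains no substitute for it; you should either restrict your claim to $s=0$ (which suffices for the stated proposition) or supply the localized energy argument with the pressure estimates.
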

\begin{proof}
The solution $w_k$ to \eqref{NS-appro} obtained in Lemma \ref{appro-sol}
fulfills
\begin{equation*}
\begin{split}
&\langle w_k(t),\varphi(t)\rangle
+\int_s^t
\Big[\langle\nabla w_k,\nabla\varphi\rangle
+\langle\{h(u_\infty+u_s)+\widetilde U\}\cdot\nabla w_k,\varphi\rangle \\
&\qquad -\langle (hu_s+\widetilde U)\otimes w_k,\nabla\varphi\rangle
+\langle (J_kw_k)\cdot\nabla w_k, \varphi\rangle\Big] d\tau \\
&=\langle w_k(s),\varphi(s)\rangle
+\int_s^t \Big[\langle w_k,\partial_\tau\varphi\rangle +\langle f,\varphi\rangle\Big]
d\tau
\end{split}
\end{equation*}
for all $0\leq s<t<\infty$ and 
$\varphi$ satisfying \eqref{test}.
It suffices to show \eqref{weak-form} under the additional condition
$\varphi\in L^\infty_{loc}([0,\infty); L^\infty(\Omega))$;
in fact, \eqref{weak-form} with $J_m\varphi\,(m=1,2,...)$
implies \eqref{weak-form} for general $\varphi$ of class \eqref{test}
by passing to the limit 
as $m\to\infty$.
We fix $T\in (0,\infty)$, and let $0\leq s<t\leq T$.
As in the standard Navier-Stokes theory, it follows from \eqref{conv-1} together
with Lemma \ref{str-conv} that
\begin{equation}
\lim_{k\to \infty}
\int_s^t\langle (J_kw_k)\cdot\nabla w_k, \varphi\rangle d\tau
=\int_s^t\langle w\cdot\nabla w, \varphi\rangle d\tau.
\label{nonl-conv}
\end{equation}
Indeed, for every $\varepsilon >0$, one can take
$L=L(\varepsilon,T)\in [R,\infty)$ so large,
independent of $k$ on account of \eqref{appro-est}, that
\begin{equation*}
\begin{split}
&\quad 
\left|
\int_s^t\langle (J_kw_k-w)\cdot\nabla w_k, (1-\chi_{B_L})\varphi\rangle d\tau
\right|  \\
&\leq CY(T)
\left(\int_0^T\|\varphi(\tau)\|_{6,\mathbb R^3\setminus B_L}^4\,d\tau\right)^{1/4}
\leq\varepsilon,
\end{split}
\end{equation*}
where $\chi_{B_L}$ stands for the characteristic function on $B_L$.
We then find from \eqref{conv-4} that
\begin{equation*}
\begin{split}
&\quad \limsup_{k\to\infty}
\left|\int_s^t\langle (J_kw_k-w)\cdot\nabla w_k, \varphi\rangle d\tau\right| \\
&\leq\varepsilon +\lim_{k\to\infty}
\left|\int_s^t\langle (J_kw_k-w)\cdot\nabla w_k, \chi_{B_L}\varphi\rangle d\tau\right|
=\varepsilon,
\end{split}
\end{equation*}
which yields \eqref{nonl-conv}.
Given $\varepsilon >0$, we take $\widetilde U_\varepsilon$ in
Lemma \ref{auxi-appro}.
Then we have
\begin{equation*}
\begin{split}
&\quad \left|
\int_s^t\langle\widetilde U\otimes (w_k-w), \nabla\varphi\rangle d\tau\right| \\
&\leq
\left|\int_s^t\langle\widetilde U_\varepsilon \otimes (w_k-w),
\nabla\varphi\rangle d\tau\right|
+C\varepsilon\, Y(T)^{1/2}\|\nabla\varphi\|_{L^2(0,T; L^2(\Omega))}.
\end{split}
\end{equation*}
Since
$\sum_j\widetilde U_{\varepsilon,j}(\nabla\varphi_j)\in L^1(0,T; L^2(\Omega))$
and since $\varepsilon >0$ is arbitrary,
it follows from \eqref{conv-1} that
\[
\lim_{k\to\infty}
\int_s^t\langle\widetilde U\otimes (w_k-w), \nabla\varphi\rangle d\tau =0.
\]
The convergence of the other terms is easily verified.
Thus the function $w$ obtained in \eqref{candi} satisfies \eqref{weak-form}.

It remains to show \eqref{SEI} for $s=0$.
By \eqref{EE} we have
\begin{equation*}
\begin{split}
&\frac{1}{2}\|w_k(t)\|_2^2+\int_0^t\|\nabla w_k\|_2^2d\tau \\
&=\frac{1}{2}\|w_0\|_2^2
+\int_0^t\big[\langle (hu_s+\widetilde U)\otimes w_k, \nabla w_k\rangle 
+\langle f,w_k\rangle\big] d\tau
\end{split}
\end{equation*}
for all $t\geq 0$ and it suffices to prove
\begin{equation}
\lim_{k\to\infty}\int_0^t\langle (hu_s+\widetilde U)\otimes w_k, \nabla w_k\rangle d\tau
=\int_0^t\langle (hu_s+\widetilde U)\otimes w, \nabla w\rangle d\tau.
\label{conv-5}
\end{equation}
We fix $T\in (0,\infty)$, and let $t\in (0,T)$.
We also fix $\varepsilon >0$ arbitrarily and use
the function $\widetilde U_\varepsilon$ in Lemma \ref{auxi-appro} again to obtain
\begin{equation*}
\left|\int_0^t\langle (\widetilde U-\widetilde U_\varepsilon)\otimes
(w_k-w), \nabla w_k\rangle d\tau\right|
\leq CY(T)\varepsilon.
\end{equation*}
One can choose $L=L(\varepsilon,T)\in [R,\infty)$, independent of $k$, such that
\begin{equation*}
\begin{split}
&\quad \left|\int_0^t \langle (1-\chi_{B_L})(hu_s+\widetilde U_\varepsilon)\otimes
(w_k-w), \nabla w_k\rangle d\tau\right| \\
&\leq CY(T)\left(\int_0^T\left\{\|u_s\|_{6,\mathbb R^3\setminus B_L}
+\|\widetilde U_\varepsilon(\tau)\|_{6,\mathbb R^3\setminus B_L}\right\}^4d\tau
\right)^{1/4}
\leq\varepsilon.
\end{split}
\end{equation*}
Hence, we obtain from \eqref{conv-3} that
\begin{equation}
\begin{split}
&\quad \limsup_{k\to\infty}\left|
\int_0^t\langle (hu_s+\widetilde U)\otimes (w_k-w),
\nabla w_k\rangle d\tau\right| \\
&\leq (CY(T)+1)\,\varepsilon
+\lim_{k\to\infty}\int_0^T
\|hu_s+\widetilde U_\varepsilon\|_\infty\|\chi_{B_L}(w_k-w)\|_2
\|\nabla w_k\|_2 d\tau  \\
&=(CY(T)+1)\,\varepsilon.
\end{split}
\label{conv-6}
\end{equation}
On the other hand, since
\[
\|(hu_s+\widetilde U)\otimes w\|_2
\leq C(|h|_\infty\|u_s\|_3 +\|\widetilde U\|_{3,\infty})\|\nabla w\|_2
\in L^2(0,T),
\]
we have
\[
\lim_{k\to \infty}
\int_0^t\langle (hu_s+\widetilde U)\otimes w, \nabla w_k-\nabla w\rangle d\tau =0.
\]
This together with \eqref{conv-6} concludes \eqref{conv-5}.
\end{proof}

We conclude this section with the proof of the strong
energy inequality \eqref{SEI}.
\begin{proposition}
The solution obtained in Proposition \ref{weak-sol} enjoys
\eqref{SEI} for $s=0$, a.e. $s>0$ and all $t\geq s$.
\label{SEI-hold}
\end{proposition}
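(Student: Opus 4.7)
The proposition extends the energy inequality \eqref{SEI} already established at $s=0$ in Proposition \ref{weak-sol} to almost every starting time $s>0$. My plan follows the Masuda--Miyakawa--Sohr strategy: the approximations $w_k$ from Lemma \ref{appro-sol} enjoy the \emph{energy equality} (being strong solutions), which can be passed to the limit at a set of good times of full measure.

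Since $w_k$ is a strong solution of \eqref{NS-appro} and $\mathbb{P}[(J_kw_k)\cdot\nabla w_k]$ is orthogonal to $w_k$ in $L^2_\sigma(\Omega)$, testing against $w_k$ yields, for all $0 \leq s \leq t$,
\[
E_k(t) + \int_s^t \|\nabla w_k\|_2^2\,d\tau \;=\; E_k(s) + \int_s^t R_k(\tau)\,d\tau,
\]
with $E_k := \frac{1}{2}\|w_k\|_2^2$ and $R_k := \langle(hu_s+\widetilde U)\otimes w_k,\nabla w_k\rangle + \langle f,w_k\rangle$. The family $\Psi_k(t) := \frac{1}{2}\|w_0\|_2^2 - \int_0^t \|\nabla w_k\|_2^2\,d\tau$ is non-increasing and uniformly bounded by \eqref{appro-est}, so Helly's selection theorem gives, along a further subsequence, $\Psi_k(t) \to \Psi(t)$ pointwise on $[0,\infty)$ with $\Psi$ non-increasing. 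Combined with \eqref{conv-5} applied on $[0,t]$ and with the convergence of $\int_0^t \langle f, w_k\rangle\,d\tau$, this yields the pointwise convergence $E_k(t) \to e(t) := \Psi(t) + \int_0^t R(\tau)\,d\tau$, where $R := \langle(hu_s+\widetilde U)\otimes w,\nabla w\rangle + \langle f,w\rangle$. From \eqref{conv-2} and weak lower semi-continuity of the norm, $E(t) := \frac{1}{2}\|w(t)\|_2^2 \leq e(t)$ for every $t \geq 0$.

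The crucial point is to show $E(s) = e(s)$ for a.e. $s > 0$. Since $\Psi$ is non-increasing and the integral term is continuous, $e$ is continuous off a countable set. By dominated convergence, $\int_0^T E_k\,ds \to \int_0^T e\,ds$; if in addition one knows that $w_k \to w$ strongly in $L^2(0,T; L^2(\Omega))$, so that $\int_0^T E_k\,ds \to \int_0^T E\,ds$, then the non-negative function $e - E$ has zero integral and must vanish a.e. This global strong $L^2_tL^2_x$-convergence is obtained by combining the local-in-space convergence \eqref{conv-3} with a tightness estimate at spatial infinity, derived from a local energy inequality for $w_k$ outside $B_L$ together with the spatial decay of $u_s$, $\widetilde U$, and $f$ (cf.\ \eqref{steady}, \eqref{auxi-est-1}, \eqref{F-est-1}). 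Controlling the behaviour of $w_k$ at infinity uniformly in $k$ is the main technical obstacle, since the weak energy bounds alone do not rule out mass escaping to infinity.

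Finally, at any $s > 0$ with $E(s) = e(s)$, the convergence $\|w_k(s)\|_2 \to \|w(s)\|_2$ together with the weak convergence $w_k(s) \rightharpoonup w(s)$ produces strong $L^2(\Omega)$-convergence at that time. Writing the approximate energy equality on $[s,t]$ and passing to the limit --- the right-hand side converges by the previous steps, while on the left-hand side weak lower semi-continuity of the norms and subadditivity of $\liminf$ give
\[
E(t) + \int_s^t \|\nabla w\|_2^2\,d\tau \;\leq\; \liminf_{k\to\infty}\!\Big(E_k(t) + \int_s^t \|\nabla w_k\|_2^2\,d\tau\Big) \;=\; E(s) + \int_s^t R\,d\tau
\]
--- yielding \eqref{SEI} for every $t \geq s$ at such $s$, while the case $s=0$ is already contained in Proposition \ref{weak-sol}.
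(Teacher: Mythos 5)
Your overall architecture (energy equality for the approximations $w_k$, Helly selection for the non-increasing dissipation functional, identification of $\tfrac12\|w(s)\|_2^2$ with the pointwise limit $e(s)$ at a.e.\ $s$, then strong $L^2$-convergence at such $s$ and passage to the limit on $[s,t]$) is a legitimate classical route, but it hinges entirely on one step that you flag and do not prove: the \emph{global} strong convergence $w_k\to w$ in $L^2(0,T;L^2(\Omega))$, i.e.\ the statement that no $L^2$ mass of $w_k$ escapes to spatial infinity uniformly in $k$. The convergence \eqref{conv-3} is only local (on $\Omega_L$), and the a priori bound \eqref{appro-est} gives no tightness. You propose to close this by ``a local energy inequality for $w_k$ outside $B_L$,'' but deriving such an inequality is precisely where the difficulty sits: multiplying \eqref{appro-pde} by $(1-\psi_L)w_k$ destroys the solenoidal structure of the test function, so the pressure $p_k$ enters, and one then needs estimates on $p_k$ that are uniform in $k$ and decay suitably at infinity. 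That is not a routine appendix; in the paper it is the bulk of the proof --- the splitting $w_k=w_k^1+w_k^2+w_k^3$ with separate pressure bounds via energy/second-derivative estimates for $p_k^1$ and maximal regularity of the Stokes system for $p_k^2$, $p_k^3$ (estimates \eqref{p2-est}, \eqref{p3-est}). As written, your argument therefore has a genuine gap at its crucial step.

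It is also worth noting that the paper's proof deliberately avoids ever needing the global tightness you require. It keeps the spatial cut-off $\psi_L$ inside the energy identity \eqref{energy-cut}, so that at the initial time $s$ only $\|\sqrt{\psi_L}\,w_k(s)\|_2\to\|\sqrt{\psi_L}\,w(s)\|_2$ is needed, which follows from the local convergence \eqref{pointwise} at $s\notin J$; the limit $k\to\infty$ is taken first at fixed $L$, and only afterwards $L\to\infty$, where the cut-off error terms are harmless because they are bounded by $C_T(\|\nabla\psi_L\|_{30}+\|\nabla\psi_L\|_\sigma+\|\nabla\psi_L\|_\infty)\to 0$ (all exponents exceed $3$). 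The pressure estimates are still indispensable there, but only to control the commutator terms produced by $\psi_L$, not to prove tightness. If you want to pursue your version, you would have to import essentially the same pressure machinery to justify the exterior local energy inequality, at which point the paper's order of limits is the more economical way to organize the argument.
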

\begin{proof}
The case $s=0$ has been already shown in the proof of
Proposition \ref{weak-sol}.
Let $T\in (0,\infty)$.
To consider the other case $s\in (0,T)$,
let us take a subsequence of $\{w_k\}$, which is still denoted by itself,
and a set $J\subset (0,T)$ with the Lebesgue measure
$|J|=0$ such that
\begin{equation}
\lim_{k\to\infty}\|w_k(t)-w(t)\|_{2,\Omega_L}=0, \qquad
\forall L\in [R,\infty),\, \forall t\in (0,T)\setminus J,
\label{pointwise}
\end{equation}
where
$\Omega_L=\Omega\cap B_L$ and
$R$ is as in \eqref{cov}.
This is in fact verified as follows:
For each $i=1,2,...$,
it follows from \eqref{conv-3} that one can take a subsequence of
$\{w_k\}$, denoted by itself, and a set $J_i\subset (0,T)$
with $|J_i|=0$ such that
\[
\lim_{k\to\infty}\|w_k(t)-w(t)\|_{2,\Omega_{R+i}}=0, \qquad
\forall t\in (0,T)\setminus J_i.
\]
Then, by the diagonal method, we are led to \eqref{pointwise}
for a suitable subsequence of $\{w_k\}$, where
$J=\cup_{i=1}^\infty J_i$.

Let us go back to the approximate problem \eqref{NS-appro} together with
the pressure $p_k$ associated with the strong solution $w_k$ obtained in
Lemma \ref{appro-sol}:
\begin{equation}
\begin{split}
&\partial_tw_k+(J_kw_k)\cdot\nabla w_k+Sw_k
=\Delta w_k-\nabla p_k+f, \\
&\mbox{div $w_k$}=0, \\
&w_k|_{\partial\Omega}=0, \\
&w_k\to 0 \quad\mbox{as $|x|\to\infty$}, \\
&w_k(\cdot,0)=w_0.
\end{split}
\label{appro-pde}
\end{equation}
In order to control the behavior of the pressure $p_k$
at infinity uniformly in $k$,
it is convenient to split the solution $w_k$ into three parts
\[
w_k=w_k^1+w_k^2+w_k^3, \qquad
p_k=p_k^1+p_k^2+p_k^3,
\]
where
\begin{equation}
\partial_tw_k^1-\Delta w_k^1+\nabla p_k^1=-hu_\infty\cdot\nabla w_k+f, \quad
w_k^1(\cdot,0)=w_0,
\label{appro-1}
\end{equation}
\begin{equation}
\partial_tw_k^2-\Delta w_k^2+\nabla p_k^2=-(J_kw_k)\cdot\nabla w_k, \quad
w_k^2(\cdot,0)=0,
\label{appro-2}
\end{equation}
\begin{equation}
\partial_tw_k^3-\Delta w_k^3+\nabla p_k^3
=-(hu_s+\widetilde U)\cdot\nabla w_k-w_k\cdot\nabla (hu_s+\widetilde U), \quad
w_k^3(\cdot,0)=0,
\label{appro-3}
\end{equation}
subject to
\[
\mbox{div $w_k^j$}=0, \qquad
w_k^j|_{\partial\Omega}=0, \qquad
w_k^j\to 0
\quad\mbox{as $|x|\to\infty$}
\]
for $j=1,2,3$.

Let us begin with \eqref{appro-1}.
By the standard energy method together with \eqref{appro-est},
\eqref{f-class} and the Gronwall argument, we have
\begin{equation}
\|w_k^1(t)\|_2^2+\int_0^t\|\nabla w_k^1\|_2^2d\tau\leq CY(T)e^T,
\label{app-1-1st}
\end{equation}
and
\[
\|\nabla w_k^1(t)\|_2^2+\int_s^t\|Aw_k^1\|_2^2d\tau
\leq\|\nabla w_k^1(s)\|_2^2+
2\int_s^t\|f\|_2^2d\tau +CY(T),
\]
for $0<s<t \leq T$.
Integration of the latter with respect to $s$
over $(0,t)$ together with \eqref{f-est} and \eqref{app-1-1st} yield
\[
t\|\nabla w_k^1(t)\|_2^2+\int_0^t\tau\|Aw_k^1\|_2^2d\tau\leq C_T,
\]
which implies
\begin{equation}
\int_s^t\|Aw_k^1\|_2^2d\tau\leq\frac{C_T}{s}
\label{app-1-2nd}
\end{equation}
for $0<s<t \leq T$.
In view of the equation of \eqref{appro-1} and by use of estimate
$\|\nabla^2g\|_2\leq C(\|Ag\|_2+\|\nabla g\|_2)$ for $g\in D_2(A)$
(see Heywood \cite{He2}),
we gather \eqref{f-est}, \eqref{appro-est}, \eqref{app-1-1st} and
\eqref{app-1-2nd} to find
\begin{equation*}
\sup_k\int_s^T\|\nabla p_k^1\|_2^2d\tau <\infty.
\end{equation*}
By the embedding relation, there are constants $c_k^1\,(k=1,2,...)$ such that
\[
\sup_k\int_s^T\|p_k^1+c_k^1\|_6^2d\tau <\infty.
\]
Hence, one finds a subsequence of 
$\{p_k^1\}$
(dependent of each $s\in (0,T)$),
which one denotes by itself, as well as
$p^1\in L^2(s,T; L^6(\Omega))$ with
$\nabla p^1\in L^2(s,T; L^2(\Omega))$ so that
\begin{equation}
\begin{split}
&p_k^1+c_k^1\to p^1\quad\mbox{weakly in $L^2(s,T; L^6(\Omega))$}, \\
&\nabla p_k^1\to\nabla p^1\quad\mbox{weakly in $L^2(s,T; L^2(\Omega))$},
\end{split}
\label{conv-p}
\end{equation}
as $k\to\infty$.

We next consider \eqref{appro-2},
but this part is exactly the same as in \cite{MS}.
From \eqref{appro-est} we deduce
\[
\sup_k\int_0^T\|(J_kw_k)\cdot\nabla w_k\|_{5/4}^{5/4}\, d\tau <\infty.
\]
Then the maximal regularity for the Stokes system
(see Solonnikov \cite{Sol}, Giga and Sohr \cite{GS}) leads to
\begin{equation}
\sup_k\int_0^T\|p_k^2+c_k^2\|_{15/7}^{5/4}\, d\tau
\leq C \sup_k\int_0^T\|\nabla p_k^2\|_{5/4}^{5/4}\, d\tau <\infty
\label{p2-est}
\end{equation}
for some constants $c_k^2\,(k=1,2,...)$.

We turn to \eqref{appro-3}.
We fix $q\in (1,2)$ and take $p\in (3,\infty)$ satisfying $3/2p+1/q>1$.
By \eqref{appro-est} and by
\eqref{auxi-est-1}--\eqref{auxi-est-3}
we see that
\begin{equation*}
\begin{split}
&\quad \|(hu_s+\widetilde U)\cdot\nabla w_k+w_k\cdot\nabla (hu_s+\widetilde U)\|_r \\
&\leq \big(|h|_\infty\|u_s\|_p+\|\widetilde U\|_p\big)\|\nabla w_k\|_2
+\big(|h|_\infty\|\nabla u_s\|_p+\|\nabla\widetilde U\|_p\big)\|w_k\|_2  \\ 
&\leq C\big(1+\tau^{-1/2+3/2p}\big)\|\nabla w_k\|_2 
+C\big\{1+\tau^{-1+3/2p}(1+T)^{1/2-3/2p}\big\}Y(T)^{1/2},
\end{split}
\end{equation*}
for $\tau\in (0,T)$, where $r\in (6/5,2)$ satisfies $1/r=1/p+1/2$, and therefore
\[
\sup_k\int_0^T
\|(hu_s+\widetilde U)\cdot\nabla w_k+w_k\cdot\nabla (hu_s+\widetilde U)\|_r^q\, d\tau
<\infty.
\]
By the same reasoning as above,
we obtain
\begin{equation}
\sup_k\int_0^T\|p_k^3+c_k^3\|_{r_*}^q\, d\tau
\leq C\sup_k\int_0^T\|\nabla p_k^3\|_r^q\, d\tau<\infty,
\label{p3-est}
\end{equation}
for some constants $c_k^3\,(k=1,2,...)$,
where $1/r_*=1/r-1/3$.

We now fix $s\in (0,T)\setminus J$, and let $t\in (s,T]$, where $J$ is as in
\eqref{pointwise}.
We take a cut-off function
$\psi\in C_0^\infty(B_2)$ such that $\psi=1$ on $B_1$ as well as
$\psi\geq 0$, and set
$\psi_L(x)=\psi(x/L)$ for $L\geq R$, where $R$ is as in \eqref{cov}.
We multiply the equation of \eqref{appro-pde}
by $\psi_Lw_k$ and integrate the resulting formula
over $\Omega\times (s,t)$ to find
\begin{equation}
\begin{split}
&\quad \frac{1}{2}\|\sqrt{\psi_L}w_k(t)\|_2^2
+\int_s^t\Big(\|\sqrt{\psi_L}\nabla w_k\|_2^2+\langle\nabla p_k^1, \psi_Lw_k\rangle
\Big)d\tau \\
&=\frac{1}{2}\|\sqrt{\psi_L}w_k(s)\|_2^2
+\int_s^t\Big(-\langle\nabla\psi_L\cdot\nabla w_k, w_k\rangle \\
&\quad +\langle p_k^2+c_k^2, w_k\cdot\nabla\psi_L\rangle 
+ \langle p_k^3+c_k^3, w_k\cdot\nabla\psi_L\rangle  \\
&\quad +\left\langle\frac{|w_k|^2}{2},
\{J_kw_k+h(u_\infty+u_s)+\widetilde U\}\cdot\nabla\psi_L\right\rangle \\
&\quad +\langle (hu_s+\widetilde U)\cdot w_k, w_k\cdot\nabla\psi_L\rangle
+\langle (hu_s+\widetilde U)\otimes w_k, (\nabla w_k)\psi_L\rangle \\
&\quad +\langle f, \psi_Lw_k\rangle \Big) d\tau.
\end{split}
\label{energy-cut}
\end{equation}
On account of \eqref{p2-est} and \eqref{p3-est}, we
observe
\begin{equation}
\begin{split}
&\quad \left|\int_s^t
\langle p_k^2+c_k^2, w_k\cdot\nabla\psi_L\rangle
+\langle p_k^3+c_k^3, w_k\cdot\nabla\psi_L\rangle d\tau\right| \\
&\leq C_T\|w_k\cdot\nabla\psi_L\|_{L^5(0,T;L^{15/8}(\Omega))}
+C_T\|w_k\cdot\nabla\psi_L\|_{L^{q^\prime}(0,T;L^{(r_*)^\prime}(\Omega))} \\
&\leq C_T\big(\|\nabla\psi_L\|_{30}+\|\nabla\psi_L\|_\sigma\big),
\end{split}
\label{p-est}
\end{equation}
where $1/q^\prime+1/q=1$,
$1/(r_*)^\prime+1/r_*=1$ and $1/\sigma=5/6-1/r$.
Note that $\sigma\in (3,\infty)$.
Making use of \eqref{LH}, \eqref{LS}, \eqref{auxi-est-2} and
\eqref{appro-est}, we find
\begin{equation}
\begin{split}
&\quad \Big|\int_s^t\Big(-\langle\nabla\psi_L\cdot\nabla w_k, w_k\rangle
+\left\langle\frac{|w_k|^2}{2}, 
\{J_kw_k+h(u_\infty+u_s)+\widetilde U\}\cdot\nabla\psi_L\right\rangle \\
&\qquad +\langle (hu_s+\widetilde U)\cdot w_k, w_k\cdot\nabla\psi_L\rangle\Big)
d\tau\Big| \\
&\leq C\int_s^t\Big[\left(1+|h|_\infty\|u_s\|_3
+\|\widetilde U\|_{3,\infty}\right)\|w_k\|_2\|\nabla w_k\|_2
+|h|_\infty |u_\infty|\|w_k\|_2^2  \\
&\qquad +\|w_k\|_2^{3/2}\|\nabla w_k\|_2^{3/2}\Big] d\tau\,\|\nabla\psi_L\|_\infty \\
&\leq C\Big\{Y(T)(T^{1/2}+T)+Y(T)^{3/2}T^{1/4}\Big\}\|\nabla\psi_L\|_\infty,
\end{split}
\label{others-est}
\end{equation}
from which together with \eqref{p-est}, we see that \eqref{energy-cut} yields
\begin{equation}
\begin{split}
&\quad \frac{1}{2}\|\sqrt{\psi_L}w_k(t)\|_2^2 
+\int_s^t\Big(\|\sqrt{\psi_L}\nabla w_k\|_2^2+\langle\nabla p_k^1, \psi_Lw_k\rangle 
\Big)d\tau \\ 
&\leq\frac{1}{2}\|\sqrt{\psi_L}w_k(s)\|_2^2 
+\int_s^t\Big(\langle (hu_s+\widetilde U)\otimes w_k, (\nabla w_k)\psi_L\rangle
+\langle f, \psi_Lw_k\rangle \Big) d\tau \\
&\quad +C_T(\|\nabla\psi_L\|_{30}
+\|\nabla\psi_L\|_\sigma
+\|\nabla\psi_L\|_\infty).
\end{split}
\label{energy-cut2}
\end{equation}

We now let $k\to\infty$ along the subsequence above.
Since $s\in (0,T)\setminus J$, we know by \eqref{pointwise} that
$\lim_{k\to\infty}\|\sqrt{\psi_L}w_k(s)\|_2=\|\sqrt{\psi_L}w(s)\|_2$.
We split
\[
\int_s^t
\Big(\langle (hu_s+\widetilde U)\otimes w_k, (\nabla w_k)\psi_L\rangle
-\langle (hu_s+\widetilde U)\otimes w, (\nabla w)\psi_L\rangle\Big) d\tau
\]
into two parts $I+II$, where
\begin{equation*}
\begin{split}
|I|
&=\left|\int_s^t\langle (hu_s+\widetilde U)\otimes (w_k-w), 
(\nabla w_k)\psi_L\rangle\right| \\
&\leq CY(T)^{1/2}
\left(|h|_\infty\|u_s\|_\infty+\sup_{s\leq\tau\leq t}\|\widetilde U(\tau)\|_\infty\right)
\|w_k-w\|_{L^2(0,T; L^2(\Omega_{2L}))},
\end{split}
\end{equation*}
while
\[
|II|
=\left|\int_s^t\langle (hu_s+\widetilde U)\otimes w, (\nabla w_k-\nabla w)\psi_L
\rangle\right|\to 0\quad (k\to\infty)
\]
is easily verified by \eqref{conv-1}.
Since $\|\widetilde U(\tau)\|_\infty\leq Cs^{-1/2}$ for $\tau\geq s>0$
by \eqref{auxi-est-1},
Lemma \ref{str-conv} implies that
$\lim_{k\to\infty}I=0$, too.
From
\eqref{conv-1}, \eqref{conv-2}, \eqref{conv-3} and
\eqref{conv-p} as well as the observation above
we deduce that \eqref{energy-cut2} leads to
\begin{equation}
\begin{split}
&\quad \frac{1}{2}\|\sqrt{\psi_L}w(t)\|_2^2
+\int_s^t\Big(\|\sqrt{\psi_L}\nabla w\|_2^2+\langle\nabla p^1, \psi_Lw\rangle
\Big)d\tau \\
&\leq\frac{1}{2}\|\sqrt{\psi_L}w(s)\|_2^2
+\int_s^t\Big(\langle (hu_s+\widetilde U)\otimes w, (\nabla w)\psi_L\rangle
+\langle f, \psi_Lw\rangle \Big) d\tau \\
&\quad +C_T(\|\nabla\psi_L\|_{30}
+\|\nabla\psi_L\|_\sigma
+\|\nabla\psi_L\|_\infty).
\end{split}
\label{energy-cut3}
\end{equation}
Here, we have
\begin{equation*}
\begin{split}
\left|\int_s^t\langle \nabla p^1, \psi_Lw\rangle d\tau\right|
&=\left|-\int_s^t\langle p^1, w\cdot\nabla\psi_L\rangle d\tau\right| \\
&\leq \|\nabla\psi_L\|_3\int_s^t\|w\|_{L^2(A_L)}\|p^1\|_{L^6(A_L)}d\tau,
\end{split}
\end{equation*}
where $A_L=B_{2L}\setminus\overline{B_L}$.
By the Lebesgue convergence theorem, we see that
$\int_s^t\cdots\to 0$ as $L\to\infty$.
Therefore, by passing to the limit 
as
$L\to\infty$ in \eqref{energy-cut3},
we arrive at \eqref{SEI} for all $s\in (0,T)\setminus J$ and $t\in (s,T]$.
\end{proof}

\section{Strong solution}\label{strong-iden}

Let $\bar t\in (T_0,\infty)$, where $T_0$ is as in \eqref{h-2}
(resp. \eqref{h-3}) for the starting (resp. landing) problem.
In this section we construct a strong solution to \eqref{NS-3} 
with \eqref{data} on the interval $[\bar t,\infty)$
under a certain smallness condition on $w(\bar t)$.
And then, it is identified on $[\bar t,\infty)$ with the weak solution
obtained in the previous section.

The first two propositions in this section are
independent of the argument in the previous section.
By \eqref{h-2} problem \eqref{NS-3} on $[\bar t,\infty)$
is formally converted into the integral equation
\begin{equation}
w=\Psi w \qquad (t\geq \bar t)
\label{int-bar}
\end{equation}
with
\begin{equation*}
\begin{split}
(\Psi w)(t)&=H(t)-\int_{\bar t}^tT(t-\tau)\mathbb P[(u_s+\widetilde U)\cdot\nabla w \\
&\qquad\qquad  +w\cdot\nabla (u_s+\widetilde U)+w\cdot\nabla w](\tau) d\tau, \\
H(t)&=T(t-\bar t)w(\bar t)+H_f(t), \\
H_f(t)&=\int_{\bar t}^t T(t-\tau)\mathbb P f(\tau) d\tau,
\end{split}
\end{equation*}
where the term 
$u_s\cdot\nabla w+w\cdot\nabla u_s$
is absent for the landing problem and
\[
T(t)=
\left\{
\begin{array}{ll}
e^{-tA_{u_\infty}} \qquad & (\mbox{starting problem}),  \\
e^{-tA} & (\mbox{landing problem}).
\end{array}
\right.
\]
We take a small $w(\bar t)$ from $L^3_\sigma(\Omega)$ and look for
a solution in a closed ball
\begin{equation}
E_\rho=\{w\in E; \|w\|_E\leq\rho\}
\label{E-ball}
\end{equation}
of the Banach space
\begin{equation}
\begin{split}
E=&\{w\in C((\bar t,\infty); L^6_\sigma(\Omega)\cap L^\infty(\Omega));\,
\nabla w\in C((\bar t,\infty); L^3(\Omega)),  \\
&\qquad\qquad\qquad 
\|w\|_E:=\sup_{t\in (\bar t,\infty)}\phi_w(t)<\infty,\,\lim_{t\to\bar t+0}\phi_w(t)=0\}
\end{split}
\label{E}
\end{equation}
endowed with norm 
$\|\cdot\|_E$, where
\[
\phi_w(t)
:=(t-\bar t)^{1/2}\big(\|w(t)\|_\infty+\|\nabla w(t)\|_3\big)+(t-\bar t)^{1/4}\|w(t)\|_6.
\]
Since we need the smallness of $|u_\infty|$ to get a unique
steady flow $u_s$ for the starting problem, see \eqref{steady},
we may assume at the beginning that $|u_\infty|\leq\delta_0$.
This is not needed for the landing problem.

Let us start with the following lemma on $H_f(t)$.
\begin{lemma}
Let
\[
\frac{4}{3}<q<\frac{3}{2}<r<3.
\]
Then we have $H_f\in E$ and
\begin{equation}
\|H_f\|_E\leq
\gamma (1+\|\nabla u_s\|_q+\|\nabla u_s\|_r)
(\|U(\bar t)\|_{3,\infty,\mathbb R^3}+\|U(\bar t)\|_{3,\infty,\mathbb R^3}^2),
\label{Hf-est}
\end{equation}
with some constant $\gamma=\gamma(q,r)>0$. For the landing problem,
the term $\|\nabla u_s\|_q+\|\nabla u_s\|_r$ is absent.
\label{force-est}
\end{lemma}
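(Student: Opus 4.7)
The plan is to split $f = F + f_0$ with $f_0 := -\widetilde U\cdot\nabla\widetilde U - h(u_s\cdot\nabla\widetilde U + \widetilde U\cdot\nabla u_s)$ (the $u_s$-terms absent for landing), and bound $H_f^F$ and $H_f^{f_0}$ separately in each of the three norms entering $\|\cdot\|_E$. For $H_f^F$ I would rely on a PDE identity that extracts cancellation, while $H_f^{f_0}$ would be handled by direct Duhamel estimates using the Kobayashi-Shibata bounds \eqref{d-1}-\eqref{d-2}, Lorentz-H\"older \eqref{LH}, and the a priori information \eqref{steady}, \eqref{auxi-bar}.

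For the source piece, observe that on $[\bar t,\infty)$ we have $g\equiv 0$ (since $h-h^2=0$ and $h'=0$) and $h$ constant by \eqref{h-2} or \eqref{h-3}, so \eqref{auxi-eq} collapses to $(\partial_t+A_{hu_\infty})\widetilde U = -\mathbb P F$ holding in the strong sense thanks to \eqref{auxi-reg}. Duhamel's formula then yields the identity
\[
\int_{\bar t}^t T(t-\tau)\mathbb PF(\tau)\,d\tau = T(t-\bar t)\widetilde U(\bar t) - \widetilde U(t),
\]
which reduces the estimation of $H_f^F$ in each of $\|\cdot\|_\infty$, $\|\cdot\|_6$, $\|\nabla\cdot\|_3$ to bounds on the two RHS summands. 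The second summand is controlled by \eqref{auxi-bar}; the first by the $L^{3,\infty}\to L^r$ (and gradient-analogue) mapping of $T(s)$, which I would obtain by real interpolation from \eqref{d-1}-\eqref{d-2} into the Lorentz endpoint as in \cite{BL}, together with the bound $\|\widetilde U(\bar t)\|_{3,\infty,\Omega}\leq CK$ read off from \eqref{auxiliary}, \eqref{bog-est-L}.

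For $H_f^{f_0}$ I would convolve under the Duhamel integral. The self-interaction $\widetilde U\cdot\nabla\widetilde U$ is controlled via Lorentz-H\"older $\|\widetilde U\cdot\nabla\widetilde U(\tau)\|_q\leq \|\widetilde U(\tau)\|_p\|\nabla\widetilde U(\tau)\|_{3,\infty}$ with $1/q=1/p+1/3$, yielding $\leq CK^2(\tau-\bar t)^{-3/2+3/(2q)}$ for $q\in(3/2,3)$ by \eqref{auxi-bar}; beta-function scaling then returns the correct $(t-\bar t)^{-1/2}$ and $(t-\bar t)^{-1/4}$ decays with multiplicative constant $\propto K^2$. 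The coupling terms $\widetilde U\cdot\nabla u_s$ and $u_s\cdot\nabla\widetilde U$ are handled by H\"older using \eqref{steady} ($u_s\in L^a$ for $a>2$, $\nabla u_s\in L^s$ for $s>4/3$) and \eqref{auxi-bar}; the two admissible exponents $q\in(4/3,3/2)$ and $r\in(3/2,3)$ of the lemma are used alternately, $r$ wherever integrability of the kernel $(t-\tau)^{-3/(2\cdot)}$ at $\tau=t$ forces an exponent $>3/2$ and $q$ wherever the time decay near $\tau=\bar t$ is the binding constraint. Continuity of $H_f$ in $E$ and the vanishing $\phi_{H_f}(t)\to 0$ as $t\to\bar t^+$ then follow from strong continuity of $T(s)$ at $s=0$ on the relevant Lorentz spaces. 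The main technical obstacle is precisely the $F$-piece: a straightforward convolution bound yields only $\|H_f^F(t)\|_\infty\lesssim K(t-\bar t)^{1/2-3/(2q)}$, which under the constraint $q>3/2$ cannot beat the $(t-\bar t)^{1/2}$ weight in $\phi_{H_f}(t)$ for large $t-\bar t$; the PDE identity above is the device that removes this obstruction by extracting the cancellation intrinsic to $\widetilde U$ being itself the Duhamel solution driven by $-\mathbb PF$.
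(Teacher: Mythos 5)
Your treatment of the quadratic term $\widetilde U\cdot\nabla\widetilde U$ and of the coupling terms $u_s\cdot\nabla\widetilde U$, $\widetilde U\cdot\nabla u_s$ is essentially the paper's argument, but the way you handle the $F$-piece contains a genuine gap, and the obstruction you invoke to justify the detour is not actually present. The point you miss is that $F$ is supported in the bounded set $\Omega_{3R}$ (it is built entirely from the cut-off $\phi$ and the Bogovskii corrector on $A_R$), so Lemma \ref{F-class}, estimate \eqref{F-bar}, gives $\|F(t)\|_p\leq C(t-\bar t)^{-1/2}\|U(\bar t)\|_{3,\infty,\mathbb R^3}$ for \emph{every} $p\in(1,3)$, in particular for $p=q\in(4/3,3/2)$. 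This is precisely why the hypothesis of the lemma supplies one exponent below $3/2$ and one above: the paper absorbs $F$ into $f_0=F-(u_s\cdot\nabla\widetilde U+\widetilde U\cdot\nabla u_s)$, obtains \eqref{f-0} for all $p\in(4/3,3)$, and splits the Duhamel integral, using $p=3/2$ on $[\bar t,(\bar t+t)/2]$, $p=q<3/2$ on $[(\bar t+t)/2,t-1]$ so that $(t-\tau)^{-3/2q}$ is integrable at infinity, and $p=r>3/2$ on $[t-1,t]$ so that $(t-\tau)^{-3/2r}$ is integrable at $\tau=t$. So a ``straightforward convolution bound,'' performed with the same splitting you already use for the other terms, does beat the weight, and no cancellation needs to be extracted from $F$.

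Your substitute device is, moreover, not self-contained. The identity $\int_{\bar t}^tT(t-\tau)\mathbb PF\,d\tau=T(t-\bar t)\widetilde U(\bar t)-\widetilde U(t)$ is correct on $[\bar t,\infty)$ (indeed $g=0$, $P=0$ and $h$ is constant there), and it would handle the $\|\cdot\|_\infty$ and $\|\cdot\|_6$ components of $\phi_{H_f}$. But $\|\cdot\|_E$ also contains the component $(t-\bar t)^{1/2}\|\nabla H_f(t)\|_3$, and for that your identity forces you to bound $\|\nabla\widetilde U(t)\|_3$ and $\|\nabla T(t-\bar t)\widetilde U(\bar t)\|_3$ by $C(t-\bar t)^{-1/2}\|U(\bar t)\|_{3,\infty,\mathbb R^3}$. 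Neither bound is available: \eqref{auxi-bar} controls only the weak norm $\|\nabla\widetilde U(t)\|_{3,\infty}$, and $L^{3,\infty}$ does not embed into $L^3$; and an estimate of the form $\|\nabla T(s)g\|_3\leq Cs^{-1/2}\|g\|_{3,\infty}$ cannot be produced by real interpolation from \eqref{d-2}, because the gradient estimate there is confined to $1<q\leq r\leq 3$ --- reaching the Lorentz space $L^{3,\infty}$ as an interpolation space requires an $L^{q_1}\to L^3$ gradient bound with some $q_1>3$, which is false already for the heat semigroup on $\mathbb R^3$. So the gradient component of the $F$-piece is left unproved in your scheme, whereas the direct route covers it by applying \eqref{d-2} with target exponent $3$ to $\|f_0(\tau)\|_p$, $p<3$.
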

\begin{proof}
We derive only 
\eqref{Hf-est}
since the continuity 
in $t$ (as in \eqref{Phi}) and $\lim_{t\to\bar t+0}\phi_{H_f}(t)=0$ 
are easily verified so that $H_f\in E$ 
(the latter follows from the fact that $f(t)$ does not possess 
any singular behavior near $t=\bar t$).
We divide the external force, see \eqref{data}, into two parts:
\[
f=f_0-\widetilde U\cdot\nabla\widetilde U, \quad
f_0=F-(u_s\cdot\nabla\widetilde U+\widetilde U\cdot\nabla u_s) \quad (t\geq \bar t).
\]
By \eqref{auxi-bar} we obtain
\[
\|u_s\cdot\nabla\widetilde U+\widetilde U\cdot\nabla u_s\|_p
\leq C(t-\bar t)^{-1/2}
\|\nabla u_s\|_p
\|U(\bar t)\|_{3,\infty,\mathbb R^3}
\]
for all $t>\bar t$ and $p\in (4/3,3)$, which combined with \eqref{F-bar}
leads to
\begin{equation}
\|f_0(t)\|_p\leq C
m_p
(t-\bar t)^{-1/2}\|U(\bar t)\|_{3,\infty,\mathbb R^3}
\label{f-0}
\end{equation}
for the same $p$ as above,
where we put $m_p=1+\|\nabla u_s\|_p$ for notational simplicity 
($m_p=1$ for the landing problem).
We fix $q$ and $r$ such that
\[
\frac{4}{3}<q<\frac{3}{2}<r<3
\]
and split $H_{f_0}(t)$ into
\[
H_{f_0}(t)=\left(\int_{\bar t}^{(\bar t+t)/2}+\int_{(\bar t+t)/2}^t\right)
T(t-\tau)\mathbb Pf_0(\tau) d\tau
=:H_{f_0,1}(t)+H_{f_0,2}(t).
\]
We are going to employ \eqref{d-1} and \eqref{d-2}.
From \eqref{f-0} we deduce
\begin{equation*}
\begin{split}
\|H_{f_0,1}(t)\|_\infty+\|\nabla H_{f_0,1}(t)\|_3
&\leq C\int_{\bar t}^{(\bar t+t)/2}
(t-\tau)^{-1}\|f_0(\tau)\|_{3/2}d\tau \\
&\leq C m_{3/2}
(t-\bar t)^{-1/2}\|U(\bar t)\|_{3,\infty,\mathbb R^3} \qquad (t>\bar t)
\end{split}
\end{equation*}
and
\begin{equation*} 
\begin{split}
&\quad \|H_{f_0,2}(t)\|_\infty+\|\nabla H_{f_0,2}(t)\|_3 \\
&\leq C\int_{(\bar t+t)/2}^t
(t-\tau)^{-3/2r}\|f_0(\tau)\|_r d\tau \\
&\leq C m_r
(t-\bar t)^{1/2-3/2r}\|U(\bar t)\|_{3,\infty,\mathbb R^3} \qquad
(\bar t<t\leq \bar t+2).
\end{split}
\end{equation*}
To estimate $H_{f_0,2}(t)$ for $t>\bar t+2$, we further split it into
\[
H_{f_0,2}(t)=\int_{(\bar t+t)/2}^{t-1}+\int_{t-1}^t
=:H_{f_0,21}(t)+H_{f_0,22}(t).
\]
Then we find
\begin{equation*}
\begin{split}
\|H_{f_0,21}(t)\|_\infty+\|\nabla H_{f_0,21}(t)\|_3
&\leq C\int_{(\bar t+t)/2}^{t-1}
(t-\tau)^{-3/2q}\|f_0(\tau)\|_q d\tau \\
&\leq C m_q
(t-\bar t)^{-1/2}\|U(\bar t)\|_{3,\infty,\mathbb R^3} \qquad
(t>\bar t+2),
\end{split}
\end{equation*}
and
\begin{equation*}
\begin{split} 
\|H_{f_0,22}(t)\|_\infty+\|\nabla H_{f_0,22}(t)\|_3
&\leq C\int_{t-1}^t
(t-\tau)^{-3/2r}\|f_0(\tau)\|_r d\tau \\
&\leq C m_r
(t-\bar t)^{-1/2}\|U(\bar t)\|_{3,\infty,\mathbb R^3} \qquad
(t>\bar t+2).
\end{split}
\end{equation*}
It is easy to estimate $\|H_{f_0}(t)\|_6$ without any splitting
by use of \eqref{f-0} for $p=3/2$.
The other term $\widetilde U\cdot\nabla \widetilde U$ should be treated
separately because it does not belong to $L^q(\Omega)$ with $q\leq 3/2$;
however, the treatment is easier without any splitting on account of the faster
decay
\[
\|\widetilde U\cdot\nabla \widetilde U\|_2
\leq C(t-\bar t)^{-3/4}\|U(\bar t)\|_{3,\infty,\mathbb R^3}^2 \qquad
(t>\bar t)
\]
which follows from 
\eqref{interpo-1},
\eqref{LH} and \eqref{auxi-bar}.
The proof is complete.
\end{proof}

The following proposition provides a solution to \eqref{int-bar} with some decay
properties.
Indeed we know by \eqref{U-decay} that \eqref{small-3}
below is accomplished for large $\bar t$,
but this will be taken into consideration together with the other
smallness condition \eqref{small-2} in the proof of the main 
theorems.
\begin{proposition}
Let
\[
\frac{4}{3}<q<\frac{3}{2}<r<3.
\]
There are constants
$\delta_j=\delta_j(q,r)>0\; (j=1,3)$
and $\delta_2>0$ (independent of $q,\,r$)
such that if
\begin{equation}
|u_\infty|\leq\delta_0, \qquad
\|\nabla u_s\|_q+\|\nabla u_s\|_r \leq\delta_1,
\label{small-1}
\end{equation}
\begin{equation}
w(\bar t)\in L^3_\sigma(\Omega), \qquad
\|w(\bar t)\|_3\leq \delta_2,
\label{small-2}
\end{equation}
\begin{equation}
\|U(\bar t)\|_{3,\infty,\mathbb R^3}\leq
\delta_3,
\label{small-3}
\end{equation}
where $\delta_0$ is as in \eqref{steady},
then equation \eqref{int-bar} admits a unique solution
\begin{equation}
w\in E\cap C([\bar t,\infty); L^3_\sigma(\Omega)),
\label{cl-basic}
\end{equation}
see \eqref{E}, subject to
\[
\lim_{t\to \bar t+0}\|w(t)-w(\bar t)\|_3=0, \qquad
\|w(t)\|_3\leq C\|w(\bar t)\|_3 \quad (t\geq \bar t).
\]
For the landing problem, the condition \eqref{small-1} is redundant.
\label{mild}
\end{proposition}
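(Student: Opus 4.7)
The plan is to solve the integral equation \eqref{int-bar} by a Banach fixed-point argument in the closed ball $E_\rho$ of the space $E$ defined in \eqref{E}, with $\rho$ proportional to $\|w(\bar t)\|_3 + \|U(\bar t)\|_{3,\infty,\mathbb R^3}$. The starting point is to verify $H\in E$ with
\[
\|H\|_E \leq C_1\|w(\bar t)\|_3 + C_2(1+\|\nabla u_s\|_q+\|\nabla u_s\|_r)\big(\|U(\bar t)\|_{3,\infty,\mathbb R^3}+\|U(\bar t)\|_{3,\infty,\mathbb R^3}^2\big).
\]
The forcing contribution $H_f$ is handled by Lemma \ref{force-est}, so only the homogeneous part $T(t-\bar t)w(\bar t)$ remains. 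Using \eqref{d-1}--\eqref{d-2} with $q=r=3$ (for the gradient bound) and $q=3,\,r\in\{6,\infty\}$ (for the other two components of $\phi_w$), one obtains $\phi_{T(t-\bar t)w(\bar t)}(t)\leq C\|w(\bar t)\|_3$ uniformly in $t>\bar t$. The vanishing of $\phi$ at $\bar t$ is established by approximating $w(\bar t)$ in $L^3_\sigma(\Omega)$ by smoother data (so that the instantaneous gains of integrability produce $\phi$-factors going to $0$) and using density.

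Next, for $w\in E_\rho$ I would estimate the three types of nonlinear contributions to $\Psi w-H$. For the genuine nonlinearity $w\cdot\nabla w$, the pointwise bound $\|w\cdot\nabla w(\tau)\|_2\leq\|w\|_6\|\nabla w\|_3\leq\rho^2(\tau-\bar t)^{-3/4}$ combined with \eqref{d-1} for $q=2$ yields, after a beta-function computation,
$\|T(t-\tau)\mathbb P(w\cdot\nabla w)\|_\infty$ integrable in $\tau$ with the right $(t-\bar t)^{-1/2}$ behavior; analogous choices for the $L^6$ and $\nabla\cdot\;\|_3$ norms close the bound. For the linear terms involving $\widetilde U$, I would invoke the sharp estimates \eqref{auxi-bar} (which make $\widetilde U$ behave like a heat-flow starting at $\bar t$ from data of size $\|U(\bar t)\|_{3,\infty,\mathbb R^3}$) and the Lorentz-Hölder inequality \eqref{LH}; these give the factor $\delta_3$. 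For the $u_s$-terms (starting problem only), I would split the $\tau$-integration at $(\bar t+t)/2$ and use the $L^q$ norm of $\nabla u_s$ on the near-$t$ part and the $L^r$ norm of $\nabla u_s$ on the far-from-$t$ part, so that the convolution exponents match $(t-\bar t)^{-1/2}$, $(t-\bar t)^{-1/4}$; these give the factor $\delta_1$. Combining, $\|\Psi w\|_E \leq \|H\|_E + C(\delta_1+\delta_3+\rho)\rho$, and a parallel computation for $\Psi w_1-\Psi w_2$ gives Lipschitz constant $C(\delta_1+\delta_3+\rho)$. Choosing $\rho= 2\|H\|_E$ and requiring $\delta_1,\delta_2,\delta_3$ sufficiently small so that $C(\delta_1+\delta_3+\rho)\leq 1/2$ produces the unique fixed point in $E_\rho$.

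Finally, for \eqref{cl-basic} and the $L^3$ bound, I would revisit the integral equation in the $L^3$-norm, exploiting the uniform boundedness $\|T(t)f\|_3\leq C\|f\|_3$ (Kobayashi--Shibata, which specializes \eqref{d-1} at $q=r=3$) to control $\|T(t-\bar t)w(\bar t)\|_3\leq C\|w(\bar t)\|_3$, and bounding the nonlinear and linear integrands in $L^{3/2}$ (so that $(t-\tau)^{-1/2}$ emerges from the semigroup) with factors $\rho^2$ or $\rho\delta_j$ depending on the term. Strong continuity at $\bar t$ in $L^3$ follows from the analyticity of $T$ on $L^3_\sigma$ combined with the vanishing $\phi_w(t)\to 0$. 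The main obstacle is the joint calibration of exponents: in the $u_s$-terms, neither $L^q$ nor $L^r$ alone gives an integrable bound of the right temporal weight, so the splitting at $(\bar t+t)/2$ with two different exponents is essential; likewise, the $\widetilde U\cdot\nabla w$ and $w\cdot\nabla\widetilde U$ terms can produce logarithmic divergences unless \eqref{auxi-bar} is used in place of the weaker uniform-in-time estimates \eqref{auxi-est-1}--\eqref{auxi-est-3}. Once these exponent juggles are set, the rest is standard contraction machinery.
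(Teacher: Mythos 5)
Your overall strategy coincides with the paper's: a Kato-type contraction in the ball $E_\rho$ using \eqref{d-1}--\eqref{d-2}, with $H_f$ controlled by Lemma \ref{force-est}, the $\widetilde U$-terms controlled via \eqref{auxi-bar} and the Lorentz--H\"older inequality (the paper singles out $\|w\cdot\nabla\widetilde U\|_{2,6}\leq\|w\|_6\|\nabla\widetilde U\|_{3,\infty}$ combined with $L^{2,6}$-$L^r$ semigroup estimates obtained by real interpolation), the choice $\rho\sim c_1\|w(\bar t)\|_3+2\gamma(1+\|\nabla u_s\|_q+\|\nabla u_s\|_r)\|U(\bar t)\|_{3,\infty,\mathbb R^3}$, and the smallness constants calibrated exactly as in \eqref{how-small}. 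Your treatment of $w\cdot\nabla w$ through $\|w\|_6\|\nabla w\|_3\leq\rho^2(\tau-\bar t)^{-3/4}$ and a beta-function computation is fine.

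There is, however, a concrete flaw in your handling of the terms $u_s\cdot\nabla w+w\cdot\nabla u_s$. First, you have the roles of $q$ and $r$ reversed: since $q<3/2<r$, the kernel $(t-\tau)^{-3/(2p)}$ coming from \eqref{d-1} with target $L^\infty$ is integrable near $\tau=t$ only for $p>3/2$, so it is the $L^r$-based bound that must be used on the portion of the integral \emph{near} $\tau=t$, and the $L^q$- (or $L^{3/2}$-) based bound that must be used away from $\tau=t$, not the other way around. Second, and more seriously, a single split at $(\bar t+t)/2$ does not close the estimate for large $t-\bar t$: on $((\bar t+t)/2,t)$ the $L^r$-bound yields
\begin{equation*}
\int_{(\bar t+t)/2}^{t}(t-\tau)^{-3/2r}(\tau-\bar t)^{-1/2}\,d\tau
\leq C(t-\bar t)^{1/2-3/2r},
\end{equation*}
and since $r>3/2$ this decays strictly slower than $(t-\bar t)^{-1/2}$, so $\sup_t(t-\bar t)^{1/2}\|\Psi w(t)\|_\infty$ would not be finite and $\Psi$ would not map $E_\rho$ into itself. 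The paper avoids this by the three-way decomposition already present in the proof of Lemma \ref{force-est}: for $t>\bar t+2$ one cuts additionally at $\tau=t-1$, using the $L^q$-bound (with $3/(2q)>1$, hence an integrable tail) on $((\bar t+t)/2,t-1)$ and the $L^r$-bound only on $(t-1,t)$, which restores the full rate $(t-\bar t)^{-1/2}$. This is precisely why both exponents $q<3/2$ and $r>3/2$ are needed simultaneously in \eqref{small-1}; with your two-piece split one of them is idle and the estimate fails. Once this is corrected, the rest of your argument, including the $L^3$-continuity and the bound $\|w(t)\|_3\leq C\|w(\bar t)\|_3$, goes through as the standard part of Kato's scheme, as the paper also asserts.
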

\begin{proof}
We follow the method of Kato \cite{Ka} by use of \eqref{d-1} and \eqref{d-2}.
Let $w\in E$.
Then the continuity of $\Psi w$ (as in \eqref{Phi}) and 
$\lim_{t\to\bar t+0}\phi_{\Psi w}(t)=0$ 
as the properties of elements of $E$ are easily verified.
By using
\[
\nabla u_s\in L^q(\Omega)\cap L^r(\Omega), \qquad
u_s\in L^{q_*}(\Omega)\cap L^{r_*}(\Omega),
\]
where $1/q_*=1/q-1/3$ and $1/r_*=1/r-1/3$, and by splitting
the integral over $(\bar t,t)$ in the same way as in the proof of
Lemma \ref{force-est} (see also Chen \cite{C}, Enomoto and Shibata \cite{ES2}),
the term
$u_s\cdot\nabla w+w\cdot\nabla u_s$
can be treated.
From this together with
\eqref{Hf-est}
(in which
$\|U(\bar t)\|_{3,\infty,\mathbb R^3}^2$ is replaced just by
$\|U(\bar t)\|_{3,\infty,\mathbb R^3}$ if assuming that it is less than one, see
\eqref{small-3}
with \eqref{how-small} below)
and \eqref{auxi-bar} we deduce
\begin{equation*}
\begin{split}
\|\Psi w\|_E
&\leq c_1\|w(\bar t)\|_3+
2\gamma (1+\|\nabla u_s\|_q+\|\nabla u_s\|_r)
\|U(\bar t)\|_{3,\infty,\mathbb R^3}  \\
&\quad +c_2\big(\|\nabla u_s\|_q+\|\nabla u_s\|_r
+\|U(\bar t)\|_{3,\infty,\mathbb R^3}\big)
\|w\|_E +c_3\|w\|_E^2,
\end{split}
\end{equation*}
where the only term one uses the Lorentz norm is $w\cdot\nabla \widetilde U$,
that is,
\[
\|w\cdot\nabla \widetilde U\|_{2,6}\leq\|w\|_6\|\nabla\widetilde U\|_{3,\infty},
\]
see \eqref{LH},
which is combined with $L^{2,6}$-$L^r$ estimate ($r=3,6,\infty$)
of the semigroup; indeed, such estimate is a simple consequence
of \eqref{d-1}
and
\eqref{d-2} by real interpolation.
Similarly, we have
\begin{equation*}
\begin{split}
\|\Psi w_1-\Psi w_2\|_E
&\leq c_2
\big(\|\nabla u_s\|_q+\|\nabla u_s\|_r+\|U(\bar t)\|_{3,\infty,\mathbb R^3}\big)
\|w_1-w_2\|_E  \\
&\quad +c_3(\|w_1\|_E+\|w_2\|_E)\|w_1-w_2\|_E
\end{split}
\end{equation*}
for $w_1, w_2\in E$, where $c_2$ and $c_3$ are the same constants as above.
Let us take
\[
\rho=2\Big\{
c_1\|w(\bar t)\|_3+
2\gamma (1+\|\nabla u_s\|_q+\|\nabla u_s\|_r)
\|U(\bar t)\|_{3,\infty,\mathbb R^3}\Big\}
\]
and $w\in E_\rho$, see \eqref{E-ball}.
We set
\begin{equation}
\delta_1=\frac{1}{8c_2}, \qquad
\delta_2=\frac{1}{16c_1c_3}, \qquad
\delta_3=\min\Big\{\delta_1,\;
\frac{1}{32\gamma (1+\delta_1)c_3}
\Big\}.
\label{how-small}
\end{equation}
Then the conditions \eqref{small-1}, \eqref{small-2} and \eqref{small-3}
imply $\rho\leq 1/4c_3$, so that
\begin{equation*}
\begin{split}
&\|\Psi w\|_E\leq \rho \qquad\mbox{for $w\in E_\rho$}, \\
&\|\Psi w_1-\Psi w_2\|_E\leq\frac{3}{4}\|w_1-w_2\|_E \qquad
\mbox{for $w_1, w_2\in E_\rho$}.
\end{split}
\end{equation*}
We thus obtain a unique solution $w\in E_\rho$ to \eqref{int-bar}.
The proof of additional properties of $w(t)$ in the statement is standard and may
be omitted.
\end{proof}

Indeed the solution obtained in Proposition \ref{mild}
is a strong solution with values in $L^3_\sigma(\Omega)$,
but we need the following $L^2$-strong solution for later use rather than the
$L^3$-strong solution.
\begin{proposition}
Let $w(t)$ be the solution to \eqref{int-bar} obtained in Proposition \ref{mild}.
We further assume that
$w(\bar t)\in L^2_\sigma(\Omega)$.

\begin{enumerate}
\item
The solution is of class
\begin{equation}
w\in C([\bar t,\infty); L^2_\sigma(\Omega))
\cap C((\bar t,\infty); D_2(A))
\cap C^1((\bar t,\infty); L^2_\sigma(\Omega))
\label{cl-L2}
\end{equation}
subject to
$\lim_{t\to \bar t+0}\|w(t)-w(\bar t)\|_2=0$.
It also satisfies the equation
\begin{equation}
\partial_tw+Aw+
\mathbb P[(u_\infty+u_s+\widetilde U)\cdot\nabla w+w\cdot\nabla (u_s+\widetilde U)
+w\cdot\nabla w]=\mathbb Pf
\label{NS-evo}
\end{equation}
in $L^2_\sigma(\Omega)$ and the energy equality
\begin{equation}
\begin{split}
&\quad \frac{1}{2}\|w(t)\|_2^2+\int_{\bar t}^t\|\nabla w\|_2^2 d\tau  \\
&=\frac{1}{2}\|w(\bar t)\|_2^2
+\int_{\bar t}^t \Big[\langle (u_s+\widetilde U)\otimes w, \nabla w\rangle
+\langle f,w\rangle\Big] d\tau
\end{split}
\label{EE-int}
\end{equation}
for all $t\geq\bar t$ as well as
\begin{equation}
\nabla w\in L^2_{loc}([\bar t,\infty); L^2(\Omega)).
\label{1st-dissip}
\end{equation}
For the landing problem, the steady flow $u_s$ is absent in \eqref{NS-evo}
and \eqref{EE-int}.

\item
If, in addition, $w(\bar t)\in H^1_{0,\sigma}(\Omega)$, then we have
\begin{equation}
\begin{split}
&w\in L^2_{loc}([\bar t,\infty); L^\infty(\Omega)), \qquad
\nabla w\in L^\infty_{loc}([\bar t,\infty); L^2(\Omega)), \\
&\partial_tw,\, Aw\in L^2_{loc}([\bar t,\infty); L^2_\sigma(\Omega)).
\end{split}
\label{cl-ad}
\end{equation}

\end{enumerate}
\label{strong}
\end{proposition}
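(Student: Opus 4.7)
The plan is to upgrade the mild solution $w\in E\cap C([\bar t,\infty);L^3_\sigma(\Omega))$ produced by Proposition \ref{mild} under the additional hypothesis $w(\bar t)\in L^2_\sigma(\Omega)$ by exploiting the integral equation \eqref{int-bar} together with the fact that $T(t)$ is a bounded analytic $C_0$-semigroup on $L^2_\sigma(\Omega)$. The definition of $E$ provides the pointwise-in-time bounds $\|w(t)\|_\infty+\|\nabla w(t)\|_3\lesssim (t-\bar t)^{-1/2}$ and $\|w(t)\|_6\lesssim (t-\bar t)^{-1/4}$, while \eqref{steady}, \eqref{auxi-bar} and \eqref{F-bar} control $u_s$, $\widetilde U$ and $f$ on $[\bar t,\infty)$.

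For the first claim of Part 1 --- continuity of $w$ into $L^2_\sigma(\Omega)$ up to $t=\bar t$ --- I would examine $\Psi w$ term by term and verify that every Duhamel integrand belongs to $L^2(\Omega)$ with an integrable time-singularity at $\bar t$. The decisive bounds are $\|w\cdot\nabla w\|_2\leq\|w\|_6\|\nabla w\|_3\lesssim (\tau-\bar t)^{-3/4}$ for the quadratic term; $\|u_s\cdot\nabla w\|_2+\|w\cdot\nabla u_s\|_2\lesssim (\tau-\bar t)^{-1/2}$ via \eqref{steady}; $\|\widetilde U\cdot\nabla w\|_2+\|w\cdot\nabla\widetilde U\|_2\lesssim (\tau-\bar t)^{-3/4}$ via \eqref{auxi-bar} together with the Lorentz--H\"older inequality \eqref{LH} (pairing $\|\nabla\widetilde U\|_{3,\infty}$ with $\|w\|_{6,2}$); and $\|f\|_2\lesssim (\tau-\bar t)^{-3/4}$, obtained from \eqref{F-bar} and the $L^2$-bound on $\widetilde U\cdot\nabla\widetilde U$ used already in the proof of Lemma \ref{force-est}. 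Because $T$ is a bounded $C_0$-semigroup on $L^2_\sigma$, these integrable singularities force every Duhamel contribution to tend to $0$ in $L^2_\sigma(\Omega)$ as $t\to\bar t+0$, while $T(t-\bar t)w(\bar t)\to w(\bar t)$ strongly. This gives $w\in C([\bar t,\infty);L^2_\sigma(\Omega))$ with $\|w(t)-w(\bar t)\|_2\to 0$.

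To pass from this to $w\in C((\bar t,\infty);D_2(A))\cap C^1((\bar t,\infty);L^2_\sigma(\Omega))$ together with the pointwise validity of \eqref{NS-evo}, I would establish local H\"older continuity in $t$ with values in $L^2_\sigma(\Omega)$ of the full right-hand side of \eqref{NS-evo}: the forcing part comes from \eqref{f-hoelder}, and the nonlinear part follows from $w\in E$ by a short bootstrap using the smoothing properties of $T(t)$ on the integral equation (the H\"older continuity of $w$ and $\nabla w$ in $L^\infty$ and $L^3$ on compact subintervals of $(\bar t,\infty)$ is produced by the mild formulation itself, in the spirit of \eqref{Phi}). The classical theory of analytic semigroups then yields the desired regularity and the validity of \eqref{NS-evo} in $L^2_\sigma(\Omega)$. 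Multiplying \eqref{NS-evo} by $w$, integrating over $[\bar t+\varepsilon,t]$, using $\mathrm{div}\,w=0$ and the boundary condition to annihilate $\langle u_\infty\cdot\nabla w,w\rangle$, $\langle(u_s+\widetilde U)\cdot\nabla w,w\rangle$ and $\langle w\cdot\nabla w,w\rangle$, and rewriting $\langle w\cdot\nabla(u_s+\widetilde U),w\rangle=-\langle(u_s+\widetilde U)\otimes w,\nabla w\rangle$ by integration by parts, I let $\varepsilon\to 0$ using the $L^2$-continuity at $\bar t$ to arrive at \eqref{EE-int}; the property \eqref{1st-dissip} is immediate.

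For Part 2 the hypothesis $w(\bar t)\in H^1_{0,\sigma}(\Omega)=D_2(A^{1/2})$ (see \eqref{HK}) combined with standard parabolic regularity for the analytic semigroup $T(t)$ produces $T(\cdot)w(\bar t)\in L^\infty_{loc}([\bar t,\infty);H^1_{0,\sigma})\cap L^2_{loc}([\bar t,\infty);D_2(A))$; the Duhamel integrals inherit this regularity because the nonlinear and forcing contributions already belong to $L^2_{loc}([\bar t,\infty);L^2(\Omega))$ by Part 1. Consequently $\nabla w\in L^\infty_{loc}(L^2)$ and $Aw\in L^2_{loc}(L^2_\sigma)$, and reading off \eqref{NS-evo} gives $\partial_tw\in L^2_{loc}(L^2_\sigma)$; the 3D Sobolev embedding $D_2(A)\hookrightarrow H^2(\Omega)\hookrightarrow L^\infty(\Omega)$ then yields $w\in L^2_{loc}(L^\infty)$. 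The main obstacle throughout is the first step of Part 1: since $E$ does not embed in $L^2$ over the exterior domain, the $L^2$-regularity has to be extracted exclusively from the datum $w(\bar t)\in L^2_\sigma$ through the Duhamel formula, and the sharp $(\tau-\bar t)^{-3/4}$ singularities produced by $w\cdot\nabla w$, $\widetilde U\cdot\nabla w$, $w\cdot\nabla\widetilde U$ and $\widetilde U\cdot\nabla\widetilde U$ must be kept integrable as $\tau\downarrow\bar t$, which is precisely why the Lorentz-type estimates \eqref{LH} and \eqref{auxi-bar} play a decisive role.
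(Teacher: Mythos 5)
Your treatment of Part~1 is sound and essentially coincides with the paper's: one verifies that the nonlinear and forcing terms are summable near $t=\bar t$ with values in $L^2_\sigma(\Omega)$ (your $(\tau-\bar t)^{-3/4}$ bounds; in fact $f$, $\widetilde U$ and $\nabla\widetilde U$ have no singularity at $\bar t$ at all, so only the $E$-norm bounds on $w$ matter) and locally H\"older continuous on $(\bar t,\infty)$, and then invokes analytic semigroup theory; the energy equality follows by the standard testing argument. That part is fine.

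Part~2, however, contains a genuine gap. You assert that the Duhamel integrals inherit $L^2_{loc}(D_2(A))$-regularity ``because the nonlinear and forcing contributions already belong to $L^2_{loc}([\bar t,\infty);L^2(\Omega))$ by Part~1.'' This is false: Part~1 only gives $\|w\cdot\nabla w\|_2=O\big((\tau-\bar t)^{-3/4}\big)$, which is not square-integrable at $\tau=\bar t$ (it lies in $L^p$ near $\bar t$ only for $p<4/3$). Even after using $w(\bar t)\in H^1_{0,\sigma}(\Omega)$ to obtain $\sup_{[\bar t,T]}\|\nabla w\|_2\leq c_T$ --- which is the paper's first step, derived from the integral equation together with the Heinz--Kato inequality \eqref{HK} --- one only improves this to $\|w\cdot\nabla w\|_2\leq\|w\|_\infty\|\nabla w\|_2=O\big((\tau-\bar t)^{-1/2}\big)$, still not in $L^2$ near $\bar t$; so the maximal-regularity/smoothing route does not close as stated. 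The paper avoids the issue entirely by running the second energy estimate: it tests \eqref{NS-evo} with $Aw$ and absorbs the quadratic term through $\|w\|_\infty^2\leq C\|Aw\|_2\|\nabla w\|_2+C\|\nabla w\|_2^2$, which yields a differential inequality for $\|\nabla w\|_2^2$ with coefficients in $L^\infty(\bar t,T)$ and never requires $w\cdot\nabla w\in L^2_tL^2_x$ up to $\bar t$; \eqref{1st-dissip} then makes the Gronwall argument start from $t=\bar t$. To repair your argument you would either have to adopt that absorption scheme, or first upgrade the bound on $\|w(t)\|_\infty$ near $\bar t$ to $O\big((t-\bar t)^{-1/4}\big)$ by re-solving the integral equation from the $L^6$-data $w(\bar t)\in H^1_{0,\sigma}\hookrightarrow L^6$ --- neither of which appears in your proposal.
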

\begin{proof}  
Concerning the first assertion, it suffices to show that
\[
\mathbb P[f-(u_s+\widetilde U)\cdot\nabla w-w\cdot\nabla (u_s+\widetilde U)
-w\cdot\nabla w](t)
\]
is locally H\"older continuous in $t$ on the interval $(\bar t,\infty)$ 
with values in $L^2_\sigma(\Omega)$ as well as 
summable near $t=\bar t$ with values there.
The latter is obvious, for
$\|f(t)\|_2$, 
$\|\widetilde U(t)\|_6$
and $\|\nabla \widetilde U(t)\|_{3,\infty}$
do not possess any singular behavior near $t=\bar t$,
see \eqref{auxi-est-1}, \eqref{auxi-est-2} and \eqref{f-est}.
It is easy to verify the H\"older continuity locally on $(\bar t,\infty)$
of $w(t)$ with values in $L^q_\sigma(\Omega)$ for $q\geq 3$
and that of $\nabla w(t)$ with values in $L^3(\Omega)$.
This together with \eqref{auxi-reg} and \eqref{f-hoelder} lead to
the desired result.

For deduction of the second assertion,
we use the standard energy method for \eqref{NS-evo}
combined with
\[
\sup_{\bar t\leq t\leq T}\|\nabla w(t)\|_2\leq c_T, \qquad
\forall T\,\in (\bar t,\infty),
\]
which follows from estimates
of the integral equation \eqref{int-bar} 
together with \eqref{HK}
by use of $w(\bar t)\in H^1_{0,\sigma}(\Omega)$, to find
\begin{equation*}
\begin{split}
&\quad \frac{d}{dt}\|\nabla w(t)\|_2^2+\|Aw(t)\|_2^2 \\
&\leq C\Big(
\|u_\infty+u_s\|_\infty^2+\|\widetilde U(t)\|_\infty^2
+\|\nabla u_s\|_3^2
+\|\nabla\widetilde U(t)\|_{3,\infty}^2+c_T^2+c_T^4 \Big)\|\nabla w(t)\|_2^2  \\
&\quad +
C\|f(t)\|_2^2
\end{split}
\end{equation*}
for all $t\in (\bar t,T]$, where $T\in (\bar t,\infty)$ is fixed.
Note that the coefficient of $\|\nabla w\|_2^2$ 
as well as $\|f\|_2^2$
in the RHS above
belongs to $L^\infty(\bar t,T)$.
We thus employ \eqref{1st-dissip} to see that
\[
\nabla w\in L^\infty(\bar t,T; L^2(\Omega)), \qquad
Aw\in L^2(\bar t,T; L^2_\sigma(\Omega)).
\]
By the equation \eqref{NS-evo} and by
\[
\|w\|_\infty^2\leq C\|Aw\|_2\|\nabla w\|_2+
C\|\nabla w\|_2^2
\]
(see Heywood \cite{He2}), we conclude the others in \eqref{cl-ad} as well.
\end{proof}

The following proposition plays an important role
in the proof of the main theorems.
For the weak solution constructed in the previous section,
the existence of $\bar t$ satisfying the requirement below will be
shown in the following section.
\begin{proposition}
Let $\bar t\in (T_0,\infty)$, where $T_0$ is as in \eqref{h-2} or \eqref{h-3}.
Let $w(t)$ be a weak solution to \eqref{NS-3} on $[\bar t,\infty)$
with \eqref{SEI} for $s=\bar t$, and $w(\bar t)$ satisfy \eqref{small-2}
as well as $w(\bar t)\in H^1_{0,\sigma}(\Omega)$.
Assume further \eqref{small-1} and \eqref{small-3}.
By $\widetilde w(t)$ we denote the strong solution
on $[\bar t,\infty)$ to \eqref{NS-3} with initial condition
$\widetilde w(\bar t)=w(\bar t)$,
which is obtained in Proposition \ref{strong}.
Then we have
\[
w(t)=\widetilde w(t)\qquad\mbox{on $[\bar t,\infty)$},
\]
and thereby
\[
\|w(t)\|_\infty=O(t^{-1/2})\qquad\mbox{as $t\to\infty$}.
\]
For the landing problem, the condition \eqref{small-1} is redundant.
\label{identify}
\end{proposition}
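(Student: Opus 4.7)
The plan is to carry out a standard Serrin-type weak--strong uniqueness argument
on $[\bar t,\infty)$, taking advantage of the fact that the strong solution
$\widetilde w$ has Serrin-regularity locally in time thanks to $w(\bar t)\in H^1_{0,\sigma}(\Omega)$.

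First I check that $\widetilde w$ is admissible as the test function $\varphi$
in the weak formulation \eqref{weak-form} for $w$. From Proposition \ref{strong},
part (2), we have
$\widetilde w\in C([\bar t,\infty);L^2_\sigma(\Omega))\cap L^2_{loc}([\bar t,\infty);L^\infty(\Omega))$,
$\nabla\widetilde w\in L^\infty_{loc}([\bar t,\infty);L^2(\Omega))$
and $\partial_t\widetilde w\in L^2_{loc}([\bar t,\infty);L^2_\sigma(\Omega))$.
Combined with $\widetilde w\in L^\infty(L^2)\cap L^\infty_{loc}(L^6)$
(Sobolev embedding), this gives $\widetilde w\in L^\infty_{loc}([\bar t,\infty);L^{3,\infty}(\Omega))$,
so the conditions \eqref{test} are satisfied on any finite subinterval.

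Next, since for $t\geq\bar t\geq T_0$ we have $h\equiv 1$ in the starting case
and $h\equiv 0$ in the landing case, I plug $\varphi=\widetilde w$ into
\eqref{weak-form}, test the strong equation \eqref{NS-evo} against $w(\tau)$
(valid since $w(\tau)\in H^1_{0,\sigma}(\Omega)$ for a.e.\ $\tau$ and $w\in L^\infty(L^2)\cap L^2(H^1_{0})$),
and integrate over $[\bar t,t]$. Adding the two resulting identities yields an expression for
$\langle w(t),\widetilde w(t)\rangle$. Combining this with the strong energy
inequality \eqref{SEI} for $w$ (with $s=\bar t$) and the energy equality
\eqref{EE-int} for $\widetilde w$, and setting $z=w-\widetilde w$ (so $z(\bar t)=0$),
the usual cancellations of the solenoidal drift terms
$(u_\infty+u_s+\widetilde U)\cdot\nabla$ and of the trilinear term
$\widetilde w\cdot\nabla\widetilde w$ lead to
\begin{equation*}
\tfrac12\|z(t)\|_2^2+\int_{\bar t}^t\|\nabla z\|_2^2\,d\tau
\leq\int_{\bar t}^t\bigl[\langle(u_s+\widetilde U)\otimes z,\nabla z\rangle
-\langle z\cdot\nabla\widetilde w,z\rangle\bigr]d\tau,
\end{equation*}
where the steady term is absent in the landing case.

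Then I bound each term by Young's inequality:
$|\langle(u_s+\widetilde U)\otimes z,\nabla z\rangle|\leq\tfrac14\|\nabla z\|_2^2+C(\|u_s\|_\infty^2+\|\widetilde U(\tau)\|_\infty^2)\|z\|_2^2$
and
$|\langle z\cdot\nabla\widetilde w,z\rangle|=|\langle\widetilde w\otimes z,\nabla z\rangle|\leq\tfrac14\|\nabla z\|_2^2+C\|\widetilde w(\tau)\|_\infty^2\|z\|_2^2$.
The coefficient
$\Phi(\tau):=\|u_s\|_\infty^2+\|\widetilde U(\tau)\|_\infty^2+\|\widetilde w(\tau)\|_\infty^2$
is in $L^1_{loc}([\bar t,\infty))$: $\|u_s\|_\infty$ is constant by \eqref{steady},
$\|\widetilde U(\tau)\|_\infty\leq C\bar t^{-1/2}$ on $[\bar t,\infty)$ by
\eqref{auxi-est-1} (since $\bar t>0$), and $\widetilde w\in L^2_{loc}(L^\infty)$
by \eqref{cl-ad}. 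Gr\"onwall's lemma applied to
$\tfrac12\|z(t)\|_2^2\leq C\int_{\bar t}^t\Phi(\tau)\|z(\tau)\|_2^2\,d\tau$
with $z(\bar t)=0$ then forces $z\equiv 0$ on $[\bar t,\infty)$, i.e.\ $w=\widetilde w$.
Finally, the decay $\|w(t)\|_\infty=O(t^{-1/2})$ is inherited from
$\widetilde w\in E_\rho$ in Proposition \ref{mild}, since the definition of
$\|\cdot\|_E$ gives $\|\widetilde w(t)\|_\infty\leq\rho(t-\bar t)^{-1/2}$.

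The main obstacle is the careful bookkeeping in step 2 to confirm that all
duality pairings are well-defined and that the pressure-free formulation
for $w$ combines cleanly with the pointwise equation for $\widetilde w$;
once the Serrin regularity $\widetilde w\in L^2_{loc}(L^\infty)$ is available,
the Gr\"onwall step is standard, but one has to be vigilant that the various
drift terms $u_\infty\cdot\nabla z$, $u_s\cdot\nabla z$, $\widetilde U\cdot\nabla z$
and $\widetilde w\cdot\nabla z$ all produce zero when paired with $z$ by
the divergence-free condition, isolating only the ``transport-by-$z$'' terms
that give the Gr\"onwall coefficient.
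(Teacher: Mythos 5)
Your proposal is correct and follows essentially the same route as the paper: Serrin's weak--strong uniqueness argument, using $\widetilde w$ as a test function in \eqref{weak-form}, combining with \eqref{NS-evo}, \eqref{EE-int} and \eqref{SEI} at $s=\bar t$ to get the energy inequality for $z=w-\widetilde w$ (your term $-\langle z\cdot\nabla\widetilde w,z\rangle$ is exactly the paper's $\langle\widetilde w\otimes z,\nabla z\rangle$ after integration by parts), and closing with Gr\"onwall via $\widetilde w+\widetilde U+u_s\in L^2_{loc}([\bar t,\infty);L^\infty(\Omega))$. The final decay claim via $\widetilde w\in E_\rho$ also matches the paper.
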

\begin{proof}
We follow the argument of Serrin \cite{Se}.
In view of \eqref{cl-basic}, \eqref{cl-L2}, \eqref{1st-dissip}
and \eqref{cl-ad} one can take the strong solution $\widetilde w(t)$
as a test function, see \eqref{test}, in the relation \eqref{weak-form}
(with $s=\bar t$) for the weak solution $w(t)$.
We gather the resulting formula, \eqref{NS-evo},
\eqref{EE-int} for $\widetilde w(t)$ and 
\eqref{SEI} (with $s=\bar t$) for $w(t)$ to find
\begin{equation*}
\begin{split}
&\quad \frac{1}{2}\|w(t)-\widetilde w(t)\|_2^2
+\int_{\bar t}^t\|\nabla w-\nabla\widetilde w\|_2^2 d\tau  \\
&\leq\int_{\bar t}^t
\langle (\widetilde w+\widetilde U+u_s)\otimes (w-\widetilde w),
\nabla w-\nabla \widetilde w\rangle d\tau
\end{split}
\end{equation*}
for all $t\geq\bar t$.
By \eqref{cl-ad} together with \eqref{auxi-est-1} we know
\[
\widetilde w+\widetilde U+u_s\in L^2_{loc}([\bar t,\infty); L^\infty(\Omega)).
\]
Hence, we deduce from the inequality
\[
\|w(t)-\widetilde w(t)\|_2^2
\leq\int_{\bar t}^t \|\widetilde w+\widetilde U+u_s\|_\infty^2
\|w-\widetilde w\|_2^2 d\tau
\]
that both solutions must coincide for $t\geq \bar t$.
Thus, the large time behavior of the weak solution $w(t)$
follows from \eqref{cl-basic}.
\end{proof}

\section{Proof of main theorems}\label{main}

We are now in a position to prove the main theorems.
Let $w(t)$ be the weak solution to \eqref{NS-3} with \eqref{data}
obtained in Proposition \ref{weak-sol}.
Let us start with the energy inequality \eqref{SEI} for $s=0$.
By \eqref{F-est-3} we have
\[
|\langle f,w\rangle|
\leq\left\{C(1+t)^{-1/2}
+\|\widetilde U\otimes \widetilde U
+h(\widetilde U\otimes u_s+u_s\otimes \widetilde U)\|_2\right\}\|\nabla w\|_2.
\]
Given small $\varepsilon >0$, to be determined later, see \eqref{epsi},
we deduce from \eqref{tensor-est} that there is $T_\varepsilon >0$ such that
\[
\|\widetilde U\otimes \widetilde U
+h(\widetilde U\otimes u_s+u_s\otimes \widetilde U)\|_2^2
\leq\varepsilon t^{-1/2}, \qquad \forall t\geq T_\varepsilon,
\]
which implies that
\begin{equation}
\begin{split}
&\quad \left|\int_0^t\langle f,w\rangle d\tau\right|  \\
&\leq\frac{1}{4}\int_0^t\|\nabla w\|_2^2 d\tau+C\int_0^t\frac{d\tau}{1+\tau}
+C\int_0^{T_\varepsilon}\tau^{-1/2}d\tau +
2\varepsilon\int_{T_\varepsilon}^t\tau^{-1/2}d\tau
\end{split}
\label{EI-force}
\end{equation}
for all $t>T_\varepsilon$.
As for the second term of the RHS of \eqref{SEI}, we observe
\[
\left|\int_0^t\langle (hu_s+\widetilde U)\otimes w, \nabla w\rangle d\tau\right|
\leq c_0\int_0^t
\left(|h(\tau)|\|u_s\|_3 + \|\widetilde U(\tau)\|_{3,\infty}\right)
\|\nabla w\|_2^2 d\tau.
\]
Thanks to \eqref{auxi-decay}, there is $T_1\in (T_0,\infty)$ such that
\[
\|\widetilde U(t)\|_{3,\infty}\leq\frac{1}{8c_0}\qquad \forall t\geq T_1,
\]
where $T_0$ is as in \eqref{h-2} or \eqref{h-3}.
Suppose that the steady flow $u_s$ is so small that
\begin{equation}
\|u_s\|_3\leq\frac{1}{8c_0}.
\label{small-4}
\end{equation}
Then we get
\begin{equation}
\begin{split}
&\quad \left|\int_0^t \langle 
(hu_s+\widetilde U)\otimes w, \nabla w\rangle d\tau\right|  \\
&\leq \frac{1}{4}\int_{T_1}^t\|\nabla w\|_2^2 d\tau
+C\left(|h|_\infty\|u_s\|_3+\|v_0\|_{3,\infty}+M_3\right)
\int_0^{T_1}\|\nabla w\|_2^2 d\tau
\end{split}
\label{EI-add}
\end{equation}
for all $t> T_1$.
From \eqref{SEI} for $s=0$ together with \eqref{EI-force} and \eqref{EI-add}
we find
\begin{equation}
\|w(t)\|_2^2+\int_0^t\|\nabla w\|_2^2 d\tau
\leq K_\varepsilon+C\log (1+t)+
8\varepsilon\sqrt{t}
\label{growth-1}
\end{equation}
for all $t>\max\{T_\varepsilon, T_1\}$, and, therefore,
\begin{equation}
\int_t^{2t}\|\nabla w\|_2^2 d\tau
\leq K_\varepsilon+C\log (1+2t)+
8\varepsilon\sqrt{2t}
\label{growth-2}
\end{equation}
for all $t>\max\{T_\varepsilon/2, T_1/2\}$, where
\[
K_\varepsilon=\|w_0\|_2^2
+C\left(|h|_\infty\|u_s\|_3 +\|v_0\|_{3,\infty}+M_3\right)
\int_0^{T_1}\|\nabla w\|_2^2 d\tau
+C\sqrt{T_\varepsilon}.
\]

Let us recall the condition \eqref{small-3} in the previous section.
By \eqref{U-decay} there is
\begin{equation}
T_2 >\max\{T_\varepsilon, T_1\}
\label{T-2}
\end{equation}
such that
\begin{equation}
\|U(t)\|_{3,\infty,\mathbb R^3}\leq
\delta_3,
\qquad \forall t\geq T_2,
\label{small-33}
\end{equation}
where 
$\delta_3$ is
the constant in Propositon \ref{mild}.
By Proposition \ref{SEI-hold} we know that there is a set $J\subset (0,\infty)$
with the Lebesgue measure $|J|=0$ such that
$w(t)$ satisfies \eqref{SEI} for all $s\in (0,\infty)\setminus J$ and $t>s$.
On account of \eqref{growth-2} as well as \eqref{growth-1},
for every $t> T_2$, one can find
$\bar t\in (t,2t)\setminus J$ such that
\[
\|\nabla w(\bar t)\|_2^2
\leq\frac{2}{t}\left(K_\varepsilon+C\log (1+2t)+
8\varepsilon\sqrt{2t}\right),
\]
\[
\|w(\bar t)\|_2^2
\leq K_\varepsilon+C\log (1+2t)+
8\varepsilon\sqrt{2t},
\]
which yield
\[
\|w(\bar t)\|_3^4
\leq C\|\nabla w(\bar t)\|_2^2\|w(\bar t)\|_2^2
\leq\frac{c_*}{t}
\left[\left\{K_\varepsilon+\log (1+2t)\right\}^2+\varepsilon^2 t\right].
\]
Let $\delta_2>0$ be the constant in Proposition \ref{mild}.
We first choose and fix $\varepsilon >0$ such that
\begin{equation}
c_*\varepsilon^2\leq\frac{\delta_2^4}{2}.
\label{epsi}
\end{equation}
For such $\varepsilon >0$, we take
$T_2$ satisfying 
\eqref{T-2}--\eqref{small-33}
and then find $T_3\in (T_2,\infty)$ so that
\[
\frac{c_*}{t}\left\{K_\varepsilon+\log (1+2t)\right\}^2
\leq \frac{\delta_2^4}{2} \qquad\forall t\geq T_3.
\]
Let us fix $t\geq T_3\,(>T_2)$, for which we find 
$\bar t\in (t,2t)\setminus J$
such that $w(\bar t)\in H^1_{0,\sigma}(\Omega)$ with
\begin{equation}
\|w(\bar t)\|_3\leq\delta_2.
\label{small-22}
\end{equation}

Suppose that the steady flow $u_s$ is so amall that
\eqref{small-1} as well as \eqref{small-4} holds.
By \eqref{steady} there is a constant $\delta\in (0,\delta_0]$
such that the condition $|u_\infty|\leq\delta$
implies both of them.
Then, by virtue of \eqref{small-22} together with \eqref{small-33},
all the assumptions in Proposition \ref{identify} are fulfilled.
We thus obtain the decay property
\[
\|w(t)\|_\infty=O(t^{-1/2}) \qquad
\mbox{as $t\to\infty$}
\]
which together with \eqref{auxi-decay}
leads us to \eqref{attain-1} in view of \eqref{decompo}.
For the landing problem, it is obvious to obtain \eqref{attain-2}
without any smallness condition on the steady flow $u_s$.
We have thus completed the proof of both Theorems
\ref{starting} and \ref{landing}.
\hfill
$\Box$
\bigskip

\noindent
{\bf Acknowledgments}.
T.H. is supported by Grant-in-Aid for Scientific Research 15K04954
``{\em Mathematical Analysis of Interaction of Motions between Viscous Incompressible Fluids and Rigid Bodies}'' from the Japan Society for the Promotion of Science.
P.M. is supported by MIUR via the PRIN (2016)
``{\em Nonlinear Hyperbolic Partial Differential Equations, Dispersive and Transport Equations: Theoretical and Applicative Aspects}''.
Most part of this work was done while T.H. stayed at
Universit\`a degli Studi della Campania Luigi Vanvitelli.
The research of both authors is partially supported by GNFM research group of the 
{\em Instituto Nazionale di Alta Matematica}.

{\small

}

\end{document}